\let\mathcal=\CMcal
\def\Alg{\operatorname{Alg}}
\newcommand{\setdiff}{\ensuremath{-}}
\newcommand{\setof}[2]{\ensuremath{\{ #1 : #2 \}}}
\newcommand{\isomorphic}{\ensuremath{\cong}}
\newcommand{\mathe}{\ensuremath{\mathrm{e}}}
\def\Hom{\mathrm{Hom}}
\newcommand{\nonarchimedean}{non-Archimedean} 
\newcommand{\archimedean}{Archimedean} 
\newcommand{\GL}{\ensuremath{\mathrm{GL}}}
\newcommand{\Sp}{\ensuremath{\mathrm{Sp}}}
\newcommand{\GSpin}{\ensuremath{\mathrm{GSpin}}}
\newcommand{\SO}{\ensuremath{\mathrm{SO}}}
\newcommand{\Spin}{\ensuremath{\mathrm{Spin}}}
\newcommand{\rmodulo}[2]{#1 / #2} 
\newcommand{\glnidx}{\ensuremath{n}}
\newcommand{\lmodulo}[2]{\ensuremath{#1 \backslash #2}}
\newcommand{\rconj}[1]{\ensuremath{{}^{#1}}}
\def\Alg{\operatorname{Alg}}
\def\Ind{\operatorname{Ind}}
\def\ind{\operatorname{ind}}
\newcommand{\C}{\ensuremath{\mathbb{C}}}
\newcommand{\R}{\ensuremath{\mathbb{R}}}
\newcommand{\Adele}{\ensuremath{\mathbb{A}}}
\newcommand{\rhs}{right-hand side} 
\newcommand{\lhs}{left-hand side} 
\newtheorem{theo}{Theorem}[section]
\newtheorem{theorem}{Theorem}[section]
\newtheorem{lemma}[theorem]{Lemma}
\newtheorem{corollary}[theorem]{Corollary}
\newtheorem{claim}[theorem]{Claim}
\newtheorem{remark}[theorem]{Remark}
\numberwithin{equation}{section}
\begin{document}
\title[]{The characterization of theta-distinguished representations of $\GL_n$}
\author{Eyal Kaplan}
\address{Department of Mathematics, The Ohio State University, Columbus, OH 43210, USA}
\email{kaplaney@gmail.com}

\maketitle
\begin{abstract}
Let $\theta$ and $\theta'$ be a pair of exceptional representations in the sense of Kazhdan and Patterson \cite{KP},
of a metaplectic double cover of $\GL_n$. The tensor $\theta\otimes\theta'$ is a (very large) representation of $\GL_n$. We characterize
its irreducible generic quotients. In the square-integrable case, these are precisely the representations
whose symmetric square $L$-function has a pole at $s=0$. Our proof of this case involves a new globalization result. In the general case these are the representations induced
from distinguished data or pairs of representations and their contragradients. The combinatorial analysis is based on a complete determination
of the twisted Jacquet modules of $\theta$. As a corollary, $\theta$ is shown to admit a new ``metaplectic Shalika model".
\end{abstract}

\section{Introduction}\label{section:introduction}
Let $F$ be a local \nonarchimedean\ field of characteristic zero. Let
$\theta$ and $\theta'$ be a pair of exceptional representations in the sense of Kazhdan and Patterson \cite{KP}, these are representations
of a metaplectic double cover $\widetilde{\GL}_n=\widetilde{\GL}_n(F)$ of $\GL_n=\GL_n(F)$. The $\GL_n$-module $\theta\otimes\theta'$ has appeared, in both local and global incarnations, in several studies, most notably in the work of
Bump and Ginzburg \cite{BG} on the global symmetric square $L$-function. Locally it has been studied by Savin \cite{Savin3} and Kable \cite{Kable,Kable2},
who considered multiplicity-one properties and spherical quotients. In a more general context, exceptional or minimal representations
have played an important role in the theta correspondence, the descent method and Rankin-Selberg integrals \cite{GRS6,G2,GJS2}.

Our main goal in this work is to characterize the generic quotients of $\theta\otimes\theta'$. An admissible representation $\tau$ of $\GL_n$ is
called distinguished if
\begin{align*}
\Hom_{\GL_n}(\theta\otimes\theta',\tau^{\vee})\ne0,
\end{align*}
where $\tau^{\vee}$ is the representation contragradient to $\tau$.
Here is our main result.
\begin{theo}\label{theorem:intro characterization}
Let $\tau$ be an irreducible generic representation of $\GL_n$.
\begin{enumerate}[leftmargin=*]
\item If $\tau$ is essentially square-integrable, then it is distinguished if and only if it is unitary and the symmetric square $L$-function $L(s,\tau,\mathrm{Sym}^2)$ has a pole at $s=0$.
\item In general write $\tau$ as a parabolically induced representation $\tau=\tau_1\times\ldots\times\tau_m$, where each $\tau_i$ is essentially
square-integrable. Then $\tau$ is distinguished if and only if there is $0\leq m_0\leq\lfloor m/2\rfloor$ such that, perhaps after permuting
the indices of the inducing data, $\tau_{2i}=\tau_{2i-1}^{\vee}$ for $1\leq i\leq m_0$ and $\tau_i$ is distinguished for $2m_0+1\leq i\leq m$.
\end{enumerate}
\end{theo}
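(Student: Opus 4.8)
The plan is to reduce the general statement (part 2) to the essentially square-integrable case (part 1) by an analysis of twisted Jacquet modules of $\theta\otimes\theta'$ along unipotent radicals of parabolic subgroups, combined with a Mackey-theory decomposition. First, the ``if'' direction: suppose $\tau=\tau_1\times\ldots\times\tau_m$ with the stated pairing structure, so after reindexing $\tau\cong(\tau_1\times\tau_1^{\vee})\times\ldots\times(\tau_{m_0}\times\tau_{m_0}^{\vee})\times\tau_{2m_0+1}\times\ldots\times\tau_m$. The key input is that $\theta\otimes\theta'$, restricted to a Levi $\GL_{n_1}\times\ldots\times\GL_{n_r}$ via a suitable Jacquet functor, projects onto a tensor product involving exceptional representations of the smaller covers $\widetilde{\GL}_{n_i}$ together with ``linking'' factors of the form $\GL_k$ acting on a space where $\Hom_{\GL_k}(\cdot,\,\sigma\otimes\sigma^{\vee})$ is forced to be nonzero for any irreducible $\sigma$ of $\GL_k$ (this is the distinguished-by-the-diagonal phenomenon, and I expect it to follow from the explicit twisted Jacquet module computation of $\theta$ advertised in the abstract). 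Thus each paired block $\tau_i\times\tau_i^{\vee}$ is automatically distinguished, each unpaired block $\tau_i$ is distinguished by hypothesis, and distinction is preserved under parabolic induction — the last point needs an argument that a nonzero functional on the inducing data extends/integrates to a nonzero functional on the induced representation, which one gets from Bernstein's principle (a rational family of functionals, meromorphic continuation, leading-term argument) or from exactness of the Jacquet functor applied in reverse.

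For the ``only if'' direction I would argue by the geometric lemma / Bernstein--Zelevinsky filtration. Given $\tau=\tau_1\times\ldots\times\tau_m$ irreducible generic and distinguished, pick the Whittaker model of $\tau$ and realize the distinction functional as a pairing against $\theta\otimes\theta'$. Restrict along the mirabolic filtration (or along the derivatives in the sense of Bernstein--Zelevinsky) to peel off one essentially square-integrable constituent $\tau_i$ at a time; each step produces either the condition ``$\tau_i$ is itself distinguished'' (reducing to part 1) or forces a partner $\tau_j$ with $\tau_j\cong\tau_i^{\vee}$ to appear, because the only other way the relevant $\Hom$ space survives the Jacquet restriction is through a segment that is ``self-dual across the pair'' — this is the combinatorial core and is where I expect the real work to lie. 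One must track twists carefully: an essentially square-integrable $\tau_i=\tau_i^{0}\otimes|\det|^{a_i}$ is distinguished only when $a_i=0$ and $L(s,\tau_i^0,\mathrm{Sym}^2)$ has a pole, so the bookkeeping of central/exponent characters along the induction has to be exact, and the pairing $\tau_{2i}=\tau_{2i-1}^{\vee}$ is precisely what makes the product of exponents vanish.

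The essentially square-integrable case (part 1) is the analytic heart. One direction — distinguished implies the symmetric square $L$-function has a pole at $s=0$ — should come from the local theory of the Bump--Ginzburg integral for $L(s,\tau,\mathrm{Sym}^2)$: a nonzero element of $\Hom_{\GL_n}(\theta\otimes\theta',\tau^{\vee})$ feeds into that integral and a pole at $s=0$ is read off from the fact that the integral represents (a shift of) the symmetric square $L$-function and the relevant local zeta integral is nonvanishing at that point. The converse — a pole forces distinction — is, as the abstract says, proved by a \emph{new globalization}: embed the local $\tau$ into a global cuspidal (or isobaric) automorphic representation $\Pi$ whose partial symmetric square $L$-function has a pole, use the global Bump--Ginzburg construction together with $\theta\otimes\theta'$ in its automorphic (residual Eisenstein) incarnation to produce a nonzero global period, and then localize to obtain the local functional at the chosen place while controlling all other places (unramified computation plus a choice of test data). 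The main obstacle, and the step I would budget the most effort for, is this globalization: one needs an automorphic representation with prescribed local component \emph{and} a pole of the symmetric square $L$-function, which is not covered by standard globalization theorems (Prasad--Rajan, Henniart's trick, etc.) since those do not control $L$-function poles; presumably one globalizes instead the ``target'' of the functional (a suitable Shalika-type or linear period) and runs a converse-type or Poisson-summation argument, and making that rigorous — including the nonvanishing of the global period and the separation of places — is where the proof genuinely earns its keep.
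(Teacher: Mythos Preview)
Your overall architecture is right, and the ``if'' direction of part~2 matches the paper exactly (hereditary distinction under parabolic induction, plus the fact that $\tau\times\tau^{\vee}$ is always distinguished, the latter proved via a metaplectic Shalika model and Bernstein's continuation principle, as you anticipate). But in two places your proposed mechanism diverges from the paper's in ways that matter.

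\textbf{The ``only if'' direction of part~2.} You propose the mirabolic filtration / Bernstein--Zelevinsky derivatives. The paper explicitly flags this as the approach that \emph{does not} extend beyond the torus case: the derivative machinery rests on transitivity of the Levi action on nontrivial characters of the unipotent radical, which fails for $U_{n-k,k}$ with $k>1$. Instead the paper computes \emph{all} twisted Jacquet modules $\theta_{U_{n-k,k},\psi_j}$ for every orbit representative $\psi_j$ (Theorem~B), the key case being $n=2k$ with the generic Shalika-type character, proved by induction via the local ``exchange of roots'' of Ginzburg--Rallis--Soudry. This yields a filtration of $(\theta\otimes\theta')_{U_{n-k,k}}$ indexed by $0\le j\le\min(n-k,k)$; one then peels off $\tau_m$ (the block of largest size) and shows that the intermediate orbits $0<j<k$ force linked segments, contradicting irreducibility of $\tau$, so only $j=0$ (inductive step) or $j=k$ (forces $\tau_i\cong\tau_m^{\vee}$ for some $i$) survive. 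The segment combinatorics in Claim~4.9 is delicate and is where the genericity hypothesis on $\tau$ is actually used.

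\textbf{Part~1.} For ``pole $\Rightarrow$ distinguished'' you correctly identify globalization plus the global Bump--Ginzburg period as the route, but your guess at the globalization mechanism (globalize the period target, converse-type argument) is not what is done. The paper descends $\tau$ locally to a generic square-integrable representation $\pi'$ of the appropriate split or quasi-split classical group $G_n$ (via the local Langlands-type results of Jiang--Soudry, Liu, Jantzen--Liu, and Asgari--Cogdell--Shahidi), globalizes $\pi'$ on $G_n$ using Ichino--Lapid--Mao, lifts back globally by Cogdell--Kim--Piatetski-Shapiro--Shahidi, and then pins down the local component at the chosen place by Henniart's local converse theorem for $\GL_n$. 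For ``distinguished $\Rightarrow$ pole'' you propose reading the pole off the local Bump--Ginzburg integral; the paper does not do this. It first extracts from distinction the combinatorial fact that the supercuspidal support $\rho$ is self-dual (and distinguished when the segment length $l$ is odd), then invokes Shahidi's explicit formula for $L(s,\tau,\mathrm{Sym}^2)$. The residual case ($n$ and $l$ both even) is handled by a contradiction through $\GSpin_{2n+1}$: if $\tau$ were distinguished without a $\mathrm{Sym}^2$-pole, the induced representation $\mathrm{I}(\tau,1/2)$ on $\GSpin_{2n+1}$ would be irreducible, generic, and distinguished, contradicting the non-genericity of $\Theta\otimes\Theta'$ on that group.
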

The supercuspidal case has already been proved in \cite{me10}. 
The square-integrable case is handled in Theorem~\ref{theorem:supercuspidal dist GLn and pole}. It relies on a new globalization result (see below). In this
case there is one assumption, caused by the globalization method: when $\tau$ is square-integrable with a nontrivial central character and $L(s,\tau,\mathrm{Sym}^2)$ has a pole at $s=0$, the proof that $\tau$ is distinguished relies on the existence of the local functorial lift of Cogdell et al. \cite{CKPS} for quasi-split special even orthogonal groups (the global lift was described in \cite{CPSS}). See Remark~\ref{remark:exact assumptions for globalization lemma} for more details.

The fact that parabolic induction (irreducible or not) of distinguished representations
is also distinguished, i.e., an upper heredity result, was established in \cite{me9}. 
In Theorem~\ref{theorem:tau otimes tau dual is dist} we show that for an irreducible $\tau$ (generic or not), $\tau\times\tau^{\vee}$ is always distinguished.
The structure of generic distinguished representations in general is given in Theorem~\ref{theorem:distinguished irred generic}.

Now consider, for example, the representation ${\nu}^{1/2}\rho\times{\nu}^{-1/2}\rho$, where $\nu=|\det|$ and $\rho$ is an irreducible unitary self-dual supercuspidal representation
of $\GL_n$. It is of length $2$, has a unique irreducible quotient
$\mathrm{Lang}({\nu}^{1/2}\rho\times{\nu}^{-1/2}\rho)$ - the Langlands quotient, and a unique irreducible subrepresentation $\Delta$, which is square-integrable (\cite{Z3} 9.1). Theorem~\ref{theorem:tau otimes tau dual is dist} implies that ${\nu}^{1/2}\rho\times{\nu}^{-1/2}\rho$ is distinguished.
Then Theorem~\ref{theorem:intro characterization} shows that, depending on the pole of $L(s,\Delta,\mathrm{Sym}^2)$ at $s=0$, $\Delta$ is distinguished
or not. But in the absence of a pole, we deduce that $\mathrm{Lang}({\nu}^{1/2}\rho\times{\nu}^{-1/2}\rho)$ is distinguished. This is an example of a
non-generic distinguished representation.

Another immediate corollary of Theorem~\ref{theorem:intro characterization} is that an irreducible generic distinguished representation must be self-dual.

Savin \cite{Savin3} considered the spherical quotients of $\theta\otimes\theta'$.
He conjectured that a spherical representation with a trivial central character is distinguished, if and only if it is the lift of a representation
of a prescribed classical group. The case of $n=3$ was established in \cite{Savin3}. Kable \cite{Kable2} proved that such lifts are distinguished, for a general $n$, using analytic methods. 
The other direction was settled in \cite{me9}, by extending the ideas of \cite{Savin3}.

In more detail, let $B_n=T_n\ltimes N_n$ be the Borel subgroup of $\GL_n$, where $N_n$ is the subgroup of upper triangular unipotent matrices, and $Q_{n-k,k}=M_{n-k,k}\ltimes U_{n-k,k}$ be the standard maximal parabolic subgroup with $M_{n-k,k}\isomorphic\GL_{n-k}\times\GL_k$. In \cite{me9} we described a filtration of
$(\theta\otimes\theta')_{N_n}$ as a $T_n$-module, using the theory of derivatives of Bernstein and
Zelevinsky \cite{BZ1,BZ2}.
The techniques of \cite{me9} break down
in the general setting, where we have a Jacquet module of $\theta\otimes\theta'$ with respect to a unipotent radical of an arbitrary standard parabolic subgroup. The theory of derivatives is no longer applicable, the main barrier being that the action of $M_{n-k,k}$ on the set of
nontrivial characters of $U_{n-k,k}$ is not transitive.

In fact, the arguments here can also be used to deduce the structure of distinguished spherical representations, but the claims in \cite{me9}
are actually stronger: the appearance of the inducing character as a subquotient of $(\theta\otimes\theta')_{N_n}$ determines its combinatorial structure completely.

In this work we take a different approach, and rely on the computation of all the twisted Jacquet modules of $\theta$, corresponding to maximal parabolic
subgroups. Let $0<k<n$ and fix an additive character $\psi$ of $F$. The action of $M_{n-k,k}$ on the characters of $U_{n-k,k}$ has $\min(n-k,k)+1$ orbits, we fix representatives $\psi_j$,
where $0\leq j\leq \min(n-k,k)$. The character $\psi_0$ is the trivial character, and when $n=2k$, $\psi_k$ is the generic character in the sense
that its stabilizer $\mathbb{GL}_k$ in $M_{k,k}$ is reductive. The group $\mathbb{GL}_k$ is simply the diagonal embedding of $\GL_k$ in $M_{k,k}$. Restriction of the cover of $\GL_{2k}$ to $\mathbb{GL}_k$ is a trivial double cover ($\mathbb{GL}_k$ splits under this cover).
\begin{theo}\label{theorem:intro Jacquet modules}
For any $0\leq j\leq \min(n-k,k)$, the Jacquet module of $\theta$ with respect to $U_{n-k,k}$ and $\psi_j$ is isomorphic to
$(\theta_{n-k-j}\widetilde{\otimes}\theta_{k-j})\otimes\gamma_{\psi,(-1)^{j-1}}\psi$.
Here $\theta_{m}$ is an exceptional representation of $\widetilde{\GL}_m$, $\widetilde{\otimes}$ is the metaplectic tensor of Kable \cite{Kable}, $\gamma_{\psi,(-1)^{j-1}}\psi$ is a one-dimensional representation of $\mathbb{GL}_j\ltimes U_{j,j}$, and $\gamma_{\psi,(-1)^{j-1}}$ is a certain Weil factor regarded as a character of $\widetilde{\mathbb{GL}}_j$.
\end{theo}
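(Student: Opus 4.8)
The plan is to compute all these twisted Jacquet modules by an induction that reduces the rank $j$ of the character, the untwisted case $j=0$ --- where $J_{U_{n-k,k}}(\theta_n)\cong\theta_{n-k}\,\widetilde{\otimes}\,\theta_k$ is Kable's metaplectic tensor computation \cite{Kable} --- serving as the base of the ladder. Throughout I would lean on the rigidity of exceptional representations: their Jacquet modules are as small as genericity permits, and in particular are multiplicity free with an explicitly controlled set of constituents.

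First I would realize $\theta=\theta_n$ as a quotient of an induced representation from a maximal parabolic with exceptional inducing data, say $\theta_n$ is a quotient of $\induced{\cover{Q}_{n-1,1}}{\cover{\GL}_n}{\theta_{n-1}\,\widetilde{\otimes}\,\chi_1}$ for a suitable one-dimensional $\chi_1$ of $\cover{\GL}_1$; this follows from the fact that $\theta_n$ is the unique irreducible quotient of the principal series attached to an exceptional character, that such characters restrict to exceptional characters on sub-blocks, and transitivity of induction. Since $Q_{n-k,k}$ is also maximal, the double cosets $Q_{n-1,1}\backslash\GL_n/Q_{n-k,k}$ form a short explicit list, and the geometric lemma of Bernstein--Zelevinsky \cite{BZ1,BZ2} produces a finite filtration of $J_{U_{n-k,k},\psi_j}(\induced{\cover{Q}_{n-1,1}}{\cover{\GL}_n}{\theta_{n-1}\,\widetilde{\otimes}\,\chi_1})$ whose graded pieces are built from twisted Jacquet modules of $\theta_{n-1}\,\widetilde{\otimes}\,\chi_1$ along unipotent subgroups of $\GL_{n-1}\times\GL_1$, twisted by the restrictions of $\psi_j$-conjugates. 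By the inductive hypothesis each such piece is again a metaplectic tensor of smaller exceptional representations times an explicit one-dimensional twist, or is zero.

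The central step is to descend to the quotient $\theta_n$ and to prove that after applying $J_{U_{n-k,k},\psi_j}$ exactly one graded piece survives. The inductive classification of the Jacquet modules of $\theta_{n-1}$ kills every term that would require a twist incompatible with the permitted ranges; the one or two genuinely competing terms I would dispose of by a root-exchange argument in the style of Ginzburg--Rallis--Soudry \cite{GRS6}, trading the unipotent directions of $U_{n-k,k}$ that carry $\psi_j$ against directions in a conjugate unipotent so as to rewrite $J_{U_{n-k,k},\psi_j}(\theta_n)$ in a form in which the surviving contribution is manifestly $J_{U_{n-k-1,k-1},\psi_{j-1}}(\theta_{n-2})$, further restricted along the diagonal $\mathbb{GL}_{j-1}\hookrightarrow\mathbb{GL}_j$, tensored with a one-dimensional factor on the newly created $\mathbb{GL}_1$-direction and with the extra unipotent acting by $\psi$. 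Since the surviving module is irreducible by induction and $\theta_n$ does not annihilate it (its cosocle is the cosocle of the induced representation), $J_{U_{n-k,k},\psi_j}(\theta_n)$ equals it. Iterating $j$ times lands on the untwisted case for $\cover{\GL}_{n-k-j}\times\cover{\GL}_{k-j}$ and produces the factor $\theta_{n-k-j}\,\widetilde{\otimes}\,\theta_{k-j}$, with $\mathbb{GL}_j\ltimes U_{j,j}$ acting through a character.

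It remains to identify that character by tracking the metaplectic $2$-cocycle through the conjugations. Since $\psi_j$ is most conveniently realized on the anti-diagonal of a $j\times j$ block straddling the $(n-k,k)$ partition, each reduction step contributes one Weil index, and straightening the torus of the new $\mathbb{GL}_1$-direction against the cocycle yields a factor $\gamma_{\psi,\pm1}$; the sign accumulated over the $j$ steps, combined with the sign $(-1)^{j(j-1)/2}$ of the long Weyl element of $\GL_j$ used to turn the anti-diagonal into a diagonal, collapses to the single Weil factor $\gamma_{\psi,(-1)^{j-1}}$, viewed as a character of $\cover{\mathbb{GL}}_j$ through its determinant. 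I expect the main obstacles to be exactly this two-fold bookkeeping: the vanishing of the unwanted geometric-lemma terms, which is what makes the answer a \emph{single} irreducible metaplectic tensor rather than a properly filtered module, and the cocycle computation, delicate because the Weil index is only multiplicative up to Hilbert symbols, so that the precise sign $(-1)^{j-1}$ must be extracted with care.
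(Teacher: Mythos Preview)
Your proposal shares the right raw ingredients with the paper --- induction, the Ginzburg--Rallis--Soudry root exchange, and Kable's untwisted computation as the anchor --- but the architecture is different, and the descent step has a real gap.

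The paper does \emph{not} realize $\theta_n$ as a quotient of $\Ind_{\widetilde{Q}_{n-1,1}}^{\widetilde{\GL}_n}(\theta_{n-1}\widetilde{\otimes}\chi_1)$ and then apply the geometric lemma. Instead it proceeds in two stages. First (Theorem~\ref{theorem:twisted Jacquet modules of theta along generic character in k=n-k case}) it proves the extreme case $n=2k$, $j=k$ by induction on $k$: root exchange converts $\theta_{U_{k,k},\psi_k}$ into a Jacquet module along $U_{1,n-1}\rtimes U_{k-1,k-1}$; then a key lemma (Lemma~\ref{lemma:twisted theta U1 factors through theta U2}), which rests on the \emph{non-genericity} of $\theta_n$ for $n\geq3$, forces this to factor through $U_{2,n-2}$; Kable's result then gives $(\theta_2)_{N_2,\psi}\otimes(\theta_{n-2})_{U_{k-1,k-1},\psi_{k-1}}$, reducing $k$ by one. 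Second (Theorem~\ref{theorem:twisted Jacquet modules of theta along maximal parabolic}), the general $(n,k,j)$ is reduced to this extreme case via Claim~\ref{claim:twisted Jacquet factors through larger unipotent}: one shows directly that the full radical $U_{n-k-j,j,j,k-j}$ already acts trivially on $\theta_{U_{n-k,k},\psi_j}$, so one may first take the untwisted Jacquet module along $U_{n-k-j,2j,k-j}$, landing on $\theta_{n-k-j}\widetilde{\otimes}\theta_{2j}\widetilde{\otimes}\theta_{k-j}$, and then apply the $n=2j$ result to the middle block.

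Your route has a genuine difficulty at the point you flag as ``the central step''. Since $\theta_n$ is only a quotient of the induced representation, exactness of the Jacquet functor tells you $J_{U_{n-k,k},\psi_j}(\theta_n)$ is a quotient of the filtered object, but not which graded pieces survive. Your justification --- ``its cosocle is the cosocle of the induced representation'' --- does not transfer to Jacquet modules: the cosocle of $J(\theta_n)$ need not coincide with the cosocle of $J(\Ind)$. Moreover, inducing from $Q_{n-1,1}$ gives geometric-lemma pieces built from Jacquet modules of $\theta_{n-1}$, not $\theta_{n-2}$, so your stated reduction to $J_{U_{n-k-1,k-1},\psi_{j-1}}(\theta_{n-2})$ is not what the filtration produces without a further (unstated) step. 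The missing engine is precisely Lemma~\ref{lemma:twisted theta U1 factors through theta U2}: it is the non-genericity of $\theta$ that kills the unwanted twisted characters and lets one factor through a larger unipotent, and the paper applies it directly to $\theta$ rather than working inside an ambient induced module and then trying to cut down.
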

The main ingredient in the proof is the case of $n=2k=2j$. When $n=2$, $\psi_1$ is the usual Whittaker character of $N_2$ and this result
was proved by Gelbart and Piatetski-Shapiro \cite{GP} (Theorem~2.2). The general case is proved in Theorem~\ref{theorem:twisted Jacquet modules of theta along generic character in k=n-k case} using induction and the local ``exchange of roots" of Ginzburg, Rallis and Soudry \cite{GRS5}. For the global analog,
one may utilize the global ``exchanging roots" (\cite{G,GRS3,Soudry5,RGS}). The result for an arbitrary maximal parabolic is given in
Theorem~\ref{theorem:twisted Jacquet modules of theta along maximal parabolic}.

Kable \cite{Kable} computed the nontwisted Jacquet modules, i.e.,
with the trivial character $\psi_0$, and expressed them as metaplectic tensors. This already had several applications \cite{Kable,Kable2,me9}.

The particular case of $n=2k$ and the generic character $\psi_k$ implies a notion of a ``metaplectic Shalika model" ($\psi_k$ takes the form of the usual Shalika character). In this case, Theorem~\ref{theorem:intro Jacquet modules} implies the existence of a unique (up to a scalar) nontrivial linear functional on the space of $\theta$, which is $\psi_k$-equivariant on the left under $U_{k,k}$. In the non-metaplectic setting, for the existence of a Shalika
functional, the stabilizer $\mathbb{GL}_k$ must then act trivially. The correspondence between representations of $\mathbb{GL}_k$ and $\widetilde{\mathbb{GL}}_k$ (recall that this cover is trivial) takes
$1$ to $1\otimes\gamma_{\psi}$, which is essentially the representation $\gamma_{\psi,(-1)^{j-1}}$ we obtained. In turn, we deduce a metaplectic Shalika model for $\theta$, which we use for the proof of Theorem~\ref{theorem:tau otimes tau dual is dist}. For more details see
Remark~\ref{remark:about metaplectic Shalika model}. We mention that this model may have applications similar to those of \cite{FJ,JR2}.

Let $\GSpin_{2n+1}$ be the split odd general spin group of rank $n+1$.
A result similar to Theorem~\ref{theorem:intro Jacquet modules}, for exceptional
representations $\Theta$ of $\GSpin_{2n+1}$ (defined in \cite{me8} following the exposition in \cite{BFG} for $\SO_{2n+1}$), was proved in \cite{me10} and used in a study of $\Theta\otimes\Theta'$. The definition of distinguished representations of $\GSpin_{2n+1}$ is similar to that of $\GL_n$,
an irreducible representation is distinguished if its contragradient is a quotient of $\Theta\otimes\Theta'$. The results of
\cite{me10} allow us to alternate between quotients of $\theta\otimes\theta'$ and $\Theta\otimes\Theta'$, using parabolic induction.

One essential difference between these settings, is that in contrast with $\theta\otimes\theta'$, the space
$\Theta\otimes\Theta'$ does not afford a Whittaker functional (\cite{me10} Theorem~1).
This interplay will be used
as an ingredient in a forthcoming work on the conjecture of Lapid and Mao on
Whittaker-Fourier coefficients, for even orthogonal groups (\cite{LM2}).
One can also use this to deduce certain irreducibility results (see Remark~\ref{remark:irred}).
In more detail, Proposition~4.1 of \cite{me10} showed that for a tempered distinguished $\tau$,
a representation parabolically induced from $\nu^{1/2}\tau\otimes1$ to $\GSpin_{2n+1}$ will also be distinguished, but then it must be reducible,
otherwise it is a generic quotient of $\Theta\otimes\Theta'$.

A result similar to a global formulation of Theorem~\ref{theorem:intro Jacquet modules} was used to compute a global co-period integral,
involving the integration of the residue of an Eisenstein series, against a pair of automorphic forms in the spaces of the small representation
of $\SO_{2n+1}$ or $\GSpin_{2n+1}$ \cite{me7,me8}.

To prove that a square-integrable representation $\tau$ such that $L(s,\tau,\mathrm{Sym}^2)$ has a pole at $s=0$, is distinguished, we apply
the following globalization lemma. As mentioned above, in one case its proof relies on an assumption, explicitly given in Remark~\ref{remark:exact assumptions for globalization lemma}.
\begin{theo}\label{theorem:intro globalization}
Let $\pi$ be a square-integrable representation of $\GL_n$. Assume that
$L(s,\pi,\mathrm{R})$ has a pole at $s=0$, for $\mathrm{R}=\mathrm{Sym}^2$ or $\bigwedge^2$. Then there exist
a number field with a ring of ad\`{e}les $\Adele$ and a global cuspidal representation
$\Pi$ of $\GL_n(\Adele)$, such that for a sufficiently large finite set of places $S$, $L^S(s,\Pi,\mathrm{R})$ has a pole at $s=1$, and at some place $v$, $\Pi_{v}=\pi$.
\end{theo}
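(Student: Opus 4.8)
The plan is to realize $\pi$ as the local functorial lift of a generic discrete series representation of a classical group, globalize that representation, and transfer back to $\GL_n(\Adele)$. Recall that under the local Langlands correspondence for $\GL_n$ the square-integrable representation $\pi$ corresponds to an $n$-dimensional representation $\phi_\pi=\varphi_0\otimes\mathrm{Sym}^{d-1}$ of $W_F\times\SL_2(\C)$ with $\varphi_0$ irreducible. The hypothesis that $L(s,\pi,\mathrm{R})$ has a pole at $s=0$ is equivalent to $\phi_\pi$ being self-dual, orthogonal if $\mathrm{R}=\mathrm{Sym}^2$ and symplectic if $\mathrm{R}=\bigwedge^2$; in the symplectic case $\det\phi_\pi=1$, so $n$ is even and the central character $\omega_\pi$ is trivial. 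Hence $\phi_\pi$ is the parameter of the generic member $\sigma$ of a discrete series $L$-packet of $H(F)$, where $H$ is the split group $\Sp_{n-1}$ if $\mathrm{R}=\mathrm{Sym}^2$ and $n$ is odd, the split group $\SO_{n+1}$ if $\mathrm{R}=\bigwedge^2$, and the quasi-split special even orthogonal group $\SO_n^{\eta}$ attached to the quadratic character $\eta=\omega_\pi$ if $\mathrm{R}=\mathrm{Sym}^2$ and $n$ is even (this last group being split precisely when $\omega_\pi=1$). That $\pi$ is the local functorial lift of $\sigma$ is part of the local Langlands classification for these groups; in the even orthogonal case one uses here the local lift of \cite{CKPS}, which is the ingredient recorded in Remark~\ref{remark:exact assumptions for globalization lemma}.

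Next I would globalize $\sigma$. Fix a number field $k$ with a finite place $v_0$ such that $k_{v_0}\cong F$, and an auxiliary finite place $v_1\ne v_0$. By a standard Poincar\'e series globalization for $H$ (attached to a non-degenerate character, with the components at $v_0$ and $v_1$ prescribed), there is a globally $\psi$-generic automorphic representation $\Sigma=\otimes'_v\Sigma_v$ of $H(\Adele)$ with $\Sigma_{v_0}\cong\sigma$, with $\Sigma_{v_1}$ a fixed generic supercuspidal representation whose Langlands parameter is irreducible, and with $\Sigma_v$ unramified outside a finite set $S_0$. Since $\Sigma_{v_1}$ is supercuspidal, $\Sigma$ is cuspidal: any constituent of a representation parabolically induced from a proper parabolic subgroup has proper supercuspidal support at every place, which is impossible at $v_1$. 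The point of the place $v_1$ is thus to force $\Sigma$ to be cuspidal and, below, to force its lift to $\GL_n(\Adele)$ to be cuspidal as well.

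Finally I would apply the generic functorial lift from $H(\Adele)$ to $\GL_n(\Adele)$: the transfer of Cogdell--Kim--Piatetski-Shapiro--Shahidi \cite{CKPS} for $\Sp$ and odd $\SO$, and that of \cite{CPSS} for quasi-split even $\SO$. Let $\Pi$ be the resulting isobaric automorphic representation of $\GL_n(\Adele)$. By the construction of the lift and its local--global compatibility, $\Pi_v$ is the local lift of $\Sigma_v$ for every $v$; in particular $\Pi_{v_0}=\pi$ by the choice of $\sigma$, and $\Pi_{v_1}$ is a discrete series of $\GL_n(k_{v_1})$ since $\Sigma_{v_1}$ is supercuspidal with irreducible parameter. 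Were $\Pi=\Pi_1\boxplus\cdots\boxplus\Pi_r$ with $r\ge2$ and each $\Pi_i$ cuspidal, then $\Pi_{v_1}$ would be the full parabolic induction of the $\Pi_{i,v_1}$ and hence not a discrete series; therefore $\Pi$ is cuspidal. Moreover $\Pi$ is self-dual, so by Jacquet--Shalika $L^S(s,\Pi\times\Pi)=L^S(s,\Pi,\mathrm{Sym}^2)\,L^S(s,\Pi,\bigwedge^2)$ has a simple pole at $s=1$, located in exactly one of the two factors. Since $\Pi_{v_0}=\pi$, whose parameter is elliptic and hence of a single self-duality type -- the one matching $\mathrm{R}$, by the hypothesis on the pole at $s=0$ -- the global parameter of $\Pi$ is of that same type, so the pole at $s=1$ lies in $L^S(s,\Pi,\mathrm{R})$; equivalently, this is part of the characterization of the image of the lift, and can also be seen from the Rankin--Selberg integral of \cite{BG}. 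Taking $S\supseteq S_0\cup\{\text{archimedean places}\}$ and $v=v_0$ completes the proof.

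The main obstacle is the interface between the globalization of $\sigma$ and the cuspidality of $\Pi$: one must produce a globally generic cuspidal $\Sigma$ realizing the prescribed square-integrable component at $v_0$ while retaining control of its global parameter, and a generic cuspidal $\Sigma$ with no extra local condition could perfectly well transfer to a proper isobaric sum -- the supercuspidal condition at $v_1$ is what rules this out. A secondary difficulty, present only when $\mathrm{R}=\mathrm{Sym}^2$, $n$ is even and $\omega_\pi\ne1$, is that the relevant group $\SO_n^{\eta}$ is merely quasi-split, so one needs the functorial lift for quasi-split (not necessarily split) even orthogonal groups together with the compatibility of its local components at the ramified places; this is exactly the hypothesis isolated in Remark~\ref{remark:exact assumptions for globalization lemma}. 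In all the remaining cases ($\mathrm{R}=\bigwedge^2$; or $\mathrm{R}=\mathrm{Sym}^2$ with $\omega_\pi=1$; or $n$ odd) no such assumption is needed.
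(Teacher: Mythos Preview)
Your proposal is correct and follows essentially the same strategy as the paper's proof of Lemma~\ref{lemma:globalization lemma}: descend $\pi$ locally to a generic square-integrable representation of the appropriate classical group, globalize it to a generic cuspidal representation while planting a supercuspidal at an auxiliary place, lift back to $\GL_n(\Adele)$ via \cite{CKPS,CPSS}, and use the auxiliary place to force cuspidality of $\Pi$. The only differences are in bookkeeping: the paper invokes \cite{ILM} (Corollary~A.6) for the globalization step and, rather than appealing directly to local--global compatibility of the lift, deduces $\Pi_{v_0}=\pi$ by combining the preservation of $\gamma$-factors at finite places (\cite{CKPS} Proposition~7.2) with Henniart's local converse theorem for $\GL_n$ \cite{GH}.
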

See Lemma~\ref{lemma:globalization lemma}. This result is expected to have other applications. 

There is a minor ``historical gap" in the theory of exceptional representations, regarding their Whittaker models.
Let $F$ be any local non-Archimedean field of characteristic different from $2$.
Kazhdan and Patterson \cite{KP} proved that for $n\geq3$, if $|2|\ne1$ in $F$, the
exceptional representations do not have Whittaker models. For $n=3$, Flicker, Kazhdan and Savin \cite{FKS} (Lemma~6) used global methods to
extend this result to the case $|2|=1$. The remaining case ($|2|=1$ and $n>3$) has been expected, but not proved. We complete the proof in
Theorem~\ref{theorem:theta non generic n greater than 3}. This immediately validates several results from \cite{BG,Kable,Kable2,me9} when the field
is dyadic, including the local functional equation of \cite{BG} (Section~5) and the aforementioned conjecture of Savin (see \cite{Kable2} p.~1602),
which now hold in general.



Exceptional representations for $\widetilde{\GL}_n$ were introduced and studied by Kazhdan and Patterson \cite{KP}. They are related to a broader class of small, or minimal, representations. The first example was probably the Weil representation of $\widetilde{\Sp}_{n}$.
These representations enjoy the vanishing of a large class of Fourier coefficients, which makes them valuable for applications involving lifts and Rankin-Selberg integrals \cite{GRS6,G2,GJS2}. They have had a profound role in the theory of representations and appeared in numerous studies, including
\cite{V,KZ,KS,Savin2,BK,Savin,GRS,BFG3,GRS3,KP3,BFG,JSd1,GanSavin,Soudry4,LokeSavin,RGS}.

To date, the significant application of the exceptional representations of Kazhdan and Patterson \cite{KP}, is the construction of a Rankin-Selberg integral representation for the symmetric square $L$-function, by Bump and Ginzburg \cite{BG}.
Takeda \cite{Tk} extended the results of \cite{BG}, to some extent, to the twisted symmetric square $L$-function.

The term ``distinguished" has been used in various contexts. Let $\xi$ be a representation of a group $G$ and let $\eta$ be a character of a subgroup $H<G$. The representation $\xi$ is called $(H,\eta)$-distinguished, if $\Hom_H(\xi,\eta)\ne0$.
There are numerous studies on local and global distinguished representations, including \cite{Jac2,JR,FJ,Offen,OS2,OS,Offen2,Jac3,Matringe3,Matringe2,Matringe,FLO,Matringe5}.

Let $F_0$ be a quadratic extension of $F$.
Matringe \cite{Matringe3,Matringe2,Matringe} studied $(\GL_n(F),\eta)$-distinguished representations of $\GL_n(F_0)$.
He proved, using different (but in some sense related) techniques, a combinatorial characterization similar to Theorem~\ref{theorem:intro characterization}. He also proved (\cite{Matringe2}) that an irreducible generic representation $\xi$ is distinguished, if and only if its Rankin-Selberg Asai $L$-function has an exceptional pole at $0$. He then showed that $L(s,\xi,\mathrm{Asai})=L(s,\rho(\xi),\mathrm{Asai})$, where $\rho(\xi)$ is the Langlands parameter associated with $\xi$ (\cite{Matringe4}).

Feigon, Lapid and Offen \cite{FLO} studied (local and global) representations distinguished by unitary groups.


The rest of this work is organized as follows. Section~\ref{section:preliminaries} contains the preliminaries, including a brief
review of the metaplectic tensor of Kable \cite{Kable} and exceptional representations. The Jacquet modules are computed
in Section~\ref{section:Heisenberg jacquet modules}. Our results on distinguished representations occupy Section~\ref{section:Distinguished representations}.

\subsection*{Acknowledgments}
I wish to express my gratitude to Erez Lapid for suggesting this project to me and for numerous valuable, encouraging and inspiring discussions.
I thank David Soudry for communicating the results of Liu \cite{BL} and Jantzen and Liu \cite{JB} to me. I thank Baiying Liu for helpful conversations.
I would like to thank Chris Jantzen for his help with Claim~\ref{claim:quasi split lift}. 
Lastly, I thank Jim Cogdell for his kind encouragement and useful remarks.

\section{Preliminaries}\label{section:preliminaries}

\subsection{The groups}\label{subsection:the groups}
Let $F$ be a local \nonarchimedean\ field of characteristic different from $2$. 
Let $(,)_2$ be the Hilbert symbol of order $2$ of $F$ and put $\mu_2=\{-1,1\}$.
For a group $G$, $C_G$ denotes its center. If $x,y\in G$ and $Y<G$, $\rconj{x}y=xyx^{-1}$,
$\rconj{x}Y=\setof{\rconj{x}y}{y\in Y}$. Also if $d\in\mathbb{Z}$, $Y^d=\{y^d:y\in Y\}$ and in particular $F^{*d}=(F^*)^d$. The set of $m\times k$ matrices over $F$ is denoted $F^{m\times k}$.

Note that our results up to and including Corollary~\ref{corollary:correctness for irred generic} apply to any
local $p$-adic field of characteristic different from $2$. Then we add the assumption that the
characteristic of $F$ is $0$ (see also Remark~\ref{remark:extending combinatorial theorem for any field}).
Henceforth we omit references to the field, e.g., $\GL_n=\GL_n(F)$.

Fix the Borel subgroup of upper triangular invertible matrices $B_{n}=T_{n}\ltimes N_{n}$, where $T_{n}$ is the diagonal torus.
A partition of $n$ into $m$ parts is an $m$-tuple $\alpha=(\alpha_1,\ldots,\alpha_m)$ of nonnegative integers, whose sum is $n$
(partitions will be regarded as ordered partitions). For a
partition $\alpha$, let $Q_{\alpha}=M_{\alpha}\ltimes U_{\alpha}$ denote the corresponding standard parabolic subgroup; its Levi
part $M_{\alpha}$ is isomorphic to $\GL_{\alpha_1}\times\ldots\times\GL_{\alpha_m}$. In particular the maximal parabolic subgroups
are the subgroups $Q_{n-k,k}$ for $0\leq k\leq n$; $Q_n=Q_{0,n}=Q_{n,0}=\GL_n$. 
The group $\GL_k$ is regarded as a subgroup of $\GL_n$ via its natural embedding in $M_{k,n-k}$.
For any parabolic subgroup $Q<\GL_n$, let $\delta_{Q}$ denote its modulus character.

Let $\widetilde{\GL}_n$ be the metaplectic double cover of $\GL_n$, as constructed by
Kazhdan and Patterson \cite{KP} (with their $c$ parameter equal to $0$). They defined their cover by restriction from the cover of
$\mathrm{SL}_{n+1}$ of Matsumoto \cite{Mats}. We use the block-compatible cocycle $\sigma=\sigma_n$ of Banks, Levi and Sepanski \cite{BLS}, which coincides with the cocycle of Kubota \cite{Kubota} for $n=2$. The block-compatibility property reads, for $a,a'\in\GL_{n-k}$ and $b,b'\in\GL_{k}$,
\begin{align}\label{eq:block-compatibility}
\sigma(\mathrm{diag}(a,b),\mathrm{diag}(a',b'))=\sigma_{n-k}(a,a')\sigma_{k}(b,b')(\det{a},\det{b'})_2.
\end{align}

Let $\mathfrak{s}:\GL_n\rightarrow\widetilde{GL}_n$ be the section of \cite{BLS} and
$p:\widetilde{\GL}_n\rightarrow\GL_n$ be the natural projection.
For any subset $X\subset \GL_n(F)$, denote $\widetilde{X}=p^{-1}(X)$. We pull back the determinant to a non-genuine character
of $\widetilde{\GL}_n$, also denoted $\det$, and using this any character of $F^*$ can be pulled back to a character of $\widetilde{\GL}_n$.
Let $\mathe$ be $1$ if $n$ is odd, otherwise $\mathe=2$. Then $C_{\widetilde{\GL}_n}=p^{-1}(C_{\GL_n}^{\mathe})$.

\subsection{Representations}\label{subsection:representations}
Let $G$ be an $l$-group (\cite{BZ1} 1.1). Representations of $G$ will be complex and smooth.
Let $\Alg{G}$ denote the category of these representations and $\Alg_{\mathrm{irr}}{G}\subset\Alg{G}$ the subcategory of irreducible representations.
For $\pi\in \Alg{G}$, $\pi^{\vee}$ is the representation contragradient to $\pi$. If $\pi$ admits a central character, it will be
denoted $\omega_{\pi}$. If $H<G$ and $g\in G$, $\rconj{g}\pi$ is the representation of $\rconj{g}H$ on the space of $\pi$ given by
$\rconj{g}\pi(x)=\pi(\rconj{g^{-1}}x)$. We say that $\pi$ is glued from representations $\pi_1,\ldots,\pi_k$, if $\pi$ has a filtration, whose quotients (which may be isomorphic or zero) are, after a permutation, $\pi_1,\ldots,\pi_k$. 

Assume that $\widetilde{G}$ is a given central extension of $G$ by $\mu_2$ and $\varphi:G\rightarrow\widetilde{G}$ is a section. If $\pi$ and $\pi'$ are genuine representations of $\widetilde{G}$, $\pi\otimes \pi'$ (outer tensor product) is a representation of $G$ via $g\mapsto\pi(\varphi(g))\otimes\pi'(\varphi(g))$. The definition is independent of the choice of $\varphi$ and the actual section will be omitted.

Let $X$ be an $l$-space (\cite{BZ1} 1.1). The space of Schwartz-Bruhat functions on $X$ is denoted $\mathcal{S}(X)$. If $G$ acts on $X$,
it also acts on $\mathcal{S}(X)$.

Regular induction is denoted $\Ind$ and $\ind$ is the compact induction.
Induction is not normalized. In $\GL_n$, if $\alpha=(\alpha_1,\ldots,\alpha_m)$ is a partition of $n$ and $\tau_1\otimes\ldots\otimes\tau_m\in\Alg{M_{\alpha}}$, i.e.,
$\tau_i\in\Alg{\GL_{\alpha_i}}$, then $\tau_1\times\ldots\times\tau_m=\Ind_{Q_{\alpha}}^{\GL_n}(\delta_{Q_{\alpha}}^{1/2}\tau_1\otimes\ldots\otimes\tau_m)$.

Let $\Alg_{\mathrm{sqr}}{\GL_n}\subset\Alg_{\mathrm{esqr}}{\GL_n}\subset\Alg_{\mathrm{irr}}{\GL_n}$ be the subcategories of square-integrable and
essentially square-integrable representations. We briefly recall the characterization of essentially square-integrable representations of Zelevinsky \cite{Z3} (Section~9).
Let $\Alg_{\mathrm{cusp}}{\GL_n}\subset\Alg_{\mathrm{esqr}}{\GL_n}$ denote the subcategory of supercuspidal representations (not necessarily
unitary). For brevity, put $\nu=|\det{}|$. Let $\mathscr{C}$ be the set of equivalence classes of $\Alg_{\mathrm{cusp}}{\GL_n}$ for all $n\geq0$. A segment in $\mathscr{C}$
is a subset $[\rho,\nu^{l-1}\rho]=\{\rho,\nu\rho\ldots,\nu^{l-1}\rho\}$, where $\rho\in\Alg_{\mathrm{cusp}}{\GL_k}$ and $l\geq1$ is an integer. The representation
$\rho\times\nu\rho\times\ldots\times\nu^{l-1}\rho$
has a unique irreducible quotient denoted $\langle[\rho,\nu^{l-1}\rho]\rangle^t$. Then $\tau\in\Alg_{\mathrm{esqr}}{\GL_n}$ if and only if it is isomorphic
to some representation $\langle[\rho,\nu^{l-1}\rho]\rangle^t$. In this case $\omega_{\tau}=\nu^{(l-1)/2}\omega_{\rho}$. Also note that
$(\langle[\rho,\nu^{l-1}\rho]\rangle^t)^{\vee}=\langle[\nu^{1-l}\rho^{\vee},\rho^{\vee}]\rangle^t$.

Zelevinsky \cite{Z3} also described the reducibility of products of essentially square-integrable representations.
Two segments $[\rho,\nu^{l-1}\rho]$ and $[\rho',\nu^{l'-1}\rho']$ are called linked, if neither of them is a subset of the other,
but their union is a segment. If they are linked, in particular $\rho=\nu^a\rho'$ for some integer $a$. The representation
$\langle[\rho_1,\nu^{l_1-1}\rho_1]\rangle^t\times\ldots\times\langle[\rho_m,\nu^{l_m-1}\rho_m]\rangle^t$ is irreducible if and only if no pair of segments are linked.

Let $\psi$ be a nontrivial additive character of $F$.
Denote the normalized Weil factor by
$\gamma_{\psi}$ (\cite{We} Section~14), $\gamma_{\psi}(\cdot)^4=1$. 
For $a\in F^*$, let $\gamma_{\psi,a}$ be the Weil factor corresponding to the character $x\mapsto\psi(ax)$. 
Recall the following formulas (see the appendix of \cite{Rao}):
\begin{align}\label{eq:Weil factor identities}
\gamma_{\psi}(xy)=\gamma_{\psi}(x)\gamma_{\psi}(y)(x,y)_2 ,\qquad \gamma_{\psi}(x^2)=1,\qquad \gamma_{\psi,a}(x)=(a,x)_2\gamma_{\psi}(x).
\end{align}
We also denote by $\psi$ the generic character of $N_n$ given by $\psi(u)=\psi(\sum_{i=1}^{n-1}u_{i,i+1})$.
By restriction, it is a character of any unipotent subgroup $U<N_n$. When we define a character of such a subgroup, we usually use the notation
$\psi$, if it coincides with this restriction.

Let $\tau\in\Alg_{\mathrm{irr}}{\GL_{n}}$. A ($\psi$-)Whittaker functional on $\tau$ is
a functional $\lambda$ such that $\lambda(u\varphi)=\psi(u)\lambda(\varphi)$ for any $u\in N_n$ and $\varphi$ in the space of $\tau$.
We say that $\tau$ is generic, if it admits a nontrivial Whittaker functional.

If $n=2k$ we can also consider a Shalika functional, the equivariance property now reads
$\lambda(\left(\begin{smallmatrix}c&v\\&c\end{smallmatrix}\right)\varphi)=\psi(\mathrm{tr}(v))\lambda(\varphi)$ for any $c\in\GL_k$ and $v\in F^{k\times k}$. If $\lambda\ne0$, it yields an embedding $\tau\subset\Ind_{\mathbb{GL}_kU}^{\GL_{2k}}(\psi)$, where $\mathbb{GL}_k=\{\mathrm{diag}(c,c):c\in\GL_k\}$, called a Shalika model of $\tau$. If it exists, uniqueness of this model was proved by Jacquet and Rallis \cite{JR2} in the \nonarchimedean\ case and by Ash and Ginburg \cite{AG} over \archimedean\ fields.

\subsection{Jacquet modules}\label{subsection:Jacquet modules}
Let $\pi\in\Alg{G}$. Let $U<G$ be a closed subgroup, exhausted by its compact subgroups (here $U$ will be a unipotent subgroup of $\GL_n$) and $\psi$ be a character of $U$. Assume $M<G$ is the normalizer of $U$ and stabilizer of $\psi$. The Jacquet module of $\pi$ with
respect to $U$ and $\psi$ is denoted $\pi_{U,\psi}$. The action is not normalized. We have the following exact sequence in $\Alg{M}$,
\begin{align*}
0\rightarrow \pi(U,\psi)\rightarrow \pi\rightarrow \pi_{U,\psi}\rightarrow0.
\end{align*}
The representation $\pi(U,\psi)$ can be characterized by the Jacquet-Langlands lemma:
\begin{lemma}\label{lemma:Jacquet kernel as integral} (see e.g. \cite{BZ1} 2.33)
a vector $v$ in the space of $\pi$ belongs to $\pi(U,\psi)$ if and only if
\begin{align*}
\int_{\mathcal{U}}\pi(u)v\ \psi^{-1}(u)\ du=0,
\end{align*}
for some compact subgroup $\mathcal{U}<U$.
\end{lemma}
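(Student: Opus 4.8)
The plan is to argue directly from the definition. Writing $V$ for the space of $\pi$, the subspace $\pi(U,\psi)$ is by construction the $\C$-span of the vectors $\pi(u)w-\psi(u)w$ with $u\in U$ and $w\in V$, so it suffices to show that this span coincides with the set $S$ of vectors annihilated by $v\mapsto\int_{\mathcal{U}}\pi(u)v\,\psi^{-1}(u)\,du$ for some compact open subgroup $\mathcal{U}<U$. Two facts will be used repeatedly: $U$ is the union of an increasing chain of compact open subgroups, so any finite subset of $U$ lies in a single one of them; and $\pi$ is smooth, so every vector is fixed by some compact open subgroup, which — after shrinking — may be assumed to lie in $\ker\psi$ as well.

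For the inclusion $\pi(U,\psi)\subseteq S$ I would first treat a single generator $w=\pi(u_0)v_0-\psi(u_0)v_0$: choosing a compact open subgroup $\mathcal{U}$ with $u_0\in\mathcal{U}$, split $\int_{\mathcal{U}}\pi(u)w\,\psi^{-1}(u)\,du$ into its two terms and substitute $u\mapsto uu_0^{-1}$ in the first; since $\mathcal{U}u_0=\mathcal{U}$ and $\psi$ is a character, the substituted term equals $\psi(u_0)\int_{\mathcal{U}}\pi(u)v_0\,\psi^{-1}(u)\,du$, which cancels the second term exactly. A general element of $\pi(U,\psi)$ is a finite combination of such generators, and enlarging $\mathcal{U}$ to a compact open subgroup containing all the finitely many elements $u_0$ that appear shows that this single $\mathcal{U}$ annihilates the whole combination.

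For the reverse inclusion $S\subseteq\pi(U,\psi)$, suppose $\int_{\mathcal{U}}\pi(u)v\,\psi^{-1}(u)\,du=0$. I would pick a compact open subgroup $\mathcal{U}_0\subseteq\mathcal{U}$ with $v$ fixed by $\mathcal{U}_0$ and $\psi|_{\mathcal{U}_0}=1$; the integral then degenerates to $\mathrm{vol}(\mathcal{U}_0)\sum_{u\in\mathcal{U}/\mathcal{U}_0}\psi^{-1}(u)\pi(u)v$. Using the identity $\psi^{-1}(u)\pi(u)v=\psi^{-1}(u)\bigl(\pi(u)v-\psi(u)v\bigr)+v$ and summing over the finite quotient $\mathcal{U}/\mathcal{U}_0$, the sum equals an element of $\pi(U,\psi)$ plus $[\mathcal{U}:\mathcal{U}_0]\,v$; since the total vanishes and the index $[\mathcal{U}:\mathcal{U}_0]$ is a nonzero scalar, $v$ itself is $-[\mathcal{U}:\mathcal{U}_0]^{-1}$ times that element of $\pi(U,\psi)$, hence lies there. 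This is bookkeeping rather than a theorem with a genuine hard core; the only points needing attention — which I would flag as the main obstacle, such as it is — are arranging a single compact open $\mathcal{U}$ that simultaneously handles every generator in a given finite combination, and checking that the normalizing constants $\mathrm{vol}(\mathcal{U}_0)$ and $[\mathcal{U}:\mathcal{U}_0]$ produced when an integral collapses to a finite sum are nonzero in $\C$, so that the resulting linear identity can actually be solved for $v$. Both are immediate from smoothness of $\pi$ together with the exhaustion of $U$ by compact open subgroups.
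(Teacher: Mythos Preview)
Your argument is correct and is the standard proof of this characterization. Note, however, that the paper does not actually prove this lemma: it is stated with a reference to \cite{BZ1}~2.33 and used as a black box, so there is no ``paper's own proof'' to compare against. What you have written is essentially the argument one finds in Bernstein--Zelevinsky. One small remark: the lemma as stated says ``compact subgroup'' rather than ``compact open subgroup'', but this causes no difficulty, since if the integral vanishes over some compact $\mathcal{U}$ it also vanishes over any larger compact open $\mathcal{U}'\supset\mathcal{U}$ (decompose $\mathcal{U}'$ into left $\mathcal{U}$-cosets and use that $\psi$ is a homomorphism), after which your reduction to a finite sum goes through.
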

When $\psi=1$, we simply write $\pi(U)$ and $\pi_U$.
The following consequence of this lemma will be used.
\begin{lemma}\label{lemma:nontwisted Jacquet of tensor of two is one}
Let $Q$ be an $l$-group and $U<Q$ be a closed subgroup, exhausted by its compact subgroups, which is normal in $Q$.
The group $Q$ acts on the set of characters $\psi$ of $U$ by $\psi^g(u)=\psi(\rconj{g^{-1}}u)$, denote an orbit by $\mathcal{O}(\psi)$.
Assume $\psi$
and $\psi'$ are two characters of $U$ and let $L,L'<Q$ be their stabilizers. Assume $L,L',LU,L'U$ are closed subgroups of $Q$. Then for $\pi\in\Alg{LU}$ and $\pi'\in\Alg{L'U}$,
\begin{align*}
(\ind_{LU}^Q\pi_{U,\psi}\otimes\ind_{L'U}^Q\pi'_{U,\psi'})_U=
\begin{cases}\ind_{LU}^Q(\pi_{U,\psi}\otimes\pi'_{U,\psi^{-1}})&\psi'=\psi^{-1},\\
0&\psi'\not\in\mathcal{O}(\psi^{-1}).
\end{cases}
\end{align*}
\end{lemma}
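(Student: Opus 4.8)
The plan is to reduce everything to the Jacquet--Langlands criterion of Lemma~\ref{lemma:Jacquet kernel as integral} applied on the group $Q$ to the unipotent subgroup $U$ and the trivial character. Given $f\in\ind_{LU}^Q\pi_{U,\psi}$ and $f'\in\ind_{L'U}^Q\pi'_{U,\psi'}$, the first step is to compute, for a compact subgroup $\mathcal{U}<U$ and a point $g\in Q$ (we evaluate the section $f\otimes f'$ diagonally, since $U$ is normal and we are inducing to all of $Q$),
\begin{align*}
\int_{\mathcal{U}} (f\otimes f')(gu)\,du = f(g)\otimes f'(g)\int_{\mathcal{U}}\psi(\rconj{g}u)\,\psi'(\rconj{g}u)\,du,
\end{align*}
using that $f$ transforms on the left under $U$ by $\psi$ and $f'$ by $\psi'$ (after conjugating the $U$-variable past $g$). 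The inner integral is the integral over $\mathcal{U}$ of the character $u\mapsto(\psi\psi')(\rconj{g}u)=(\psi\psi')^{g^{-1}}(u)$ of $U$.

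For the vanishing case, I would argue as follows. Suppose $\psi'\notin\mathcal{O}(\psi^{-1})$, equivalently $(\psi\psi')^{g^{-1}}$ is a nontrivial character of $U$ for every $g\in Q$. A section $f$ has support contained in $LU\Omega\mathcal{V}$ for a finite set $\Omega\subset Q$ and a compact open $\mathcal{V}$, and likewise for $f'$; on the (finitely many, up to the compact factor) relevant cosets the character $(\psi\psi')^{g^{-1}}$ ranges over a finite set of nontrivial characters of $U$, so one can choose $\mathcal{U}$ large enough that the inner integral vanishes for all of them simultaneously. Hence the averaging integral vanishes identically, and Lemma~\ref{lemma:Jacquet kernel as integral} gives $f\otimes f'\in (\cdots)(U)$; since $f,f'$ were arbitrary, the Jacquet module is $0$. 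I should be slightly careful here that "up to the compact factor $\mathcal{V}$" genuinely reduces the problem to finitely many characters — this is the one place that needs the standard smoothness/compactness bookkeeping for sections of a compactly induced representation, but it is routine.

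For the nonvanishing case $\psi'=\psi^{-1}$, the inner integral is $\int_{\mathcal{U}}(\psi\psi^{-1})(\rconj{g}u)\,du=\mathrm{vol}(\mathcal{U})\ne0$ for every $g$, so the averaging integral equals $\mathrm{vol}(\mathcal{U})\cdot f(g)\otimes f'(g)$. By Lemma~\ref{lemma:Jacquet kernel as integral}, $f\otimes f'$ lies in the kernel of the Jacquet map if and only if $f(g)\otimes f'(g)=0$ for all $g\in Q$, i.e.\ if and only if the diagonal restriction $g\mapsto f(g)\otimes f'(g)$ vanishes. This restriction map $f\otimes f'\mapsto\big(g\mapsto f(g)\otimes f'(g)\big)$ visibly lands in $\ind_{LU}^Q(\pi_{U,\psi}\otimes\pi'_{U,\psi^{-1}})$ (note $L=L'$ when $\psi'=\psi^{-1}$, and on $LU$ the two equivariances $\psi,\psi^{-1}$ combine to make the diagonal section $U$-invariant), is $Q$-equivariant, and by the previous sentence descends to an injection on the Jacquet module; surjectivity onto $\ind_{LU}^Q(\pi_{U,\psi}\otimes\pi'_{U,\psi^{-1}})$ follows because any such section can be written as a finite sum of products $f(g)\otimes f'(g)$ of sections of the two factors (again a routine partition-of-unity argument on $\lmodulo{LU}{Q}$). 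This identifies the Jacquet module with $\ind_{LU}^Q(\pi_{U,\psi}\otimes\pi'_{U,\psi^{-1}})$, as claimed. The main obstacle is the bookkeeping in the vanishing case: ensuring that the finitely many nontrivial characters $(\psi\psi')^{g^{-1}}$ appearing on the support of $f\otimes f'$ can be killed by a single compact $\mathcal{U}$, uniformly over the compact perturbation $\mathcal{V}$ of the base points — once that is set up, everything else is a direct application of the Jacquet--Langlands lemma.
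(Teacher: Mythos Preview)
Your overall strategy --- apply the Jacquet--Langlands lemma and, in the case $\psi'=\psi^{-1}$, use the diagonal restriction map --- matches the paper's. But there is a genuine gap: you treat a pure tensor $f\otimes f'$ as if it were determined by a single variable $g\in Q$. The tensor of the two induced spaces is naturally realized inside functions on $Q\times Q$, with $(f\otimes f')(g,g')=f(g)\otimes f'(g')$, and $U$ acts by diagonal right translation. To invoke Lemma~\ref{lemma:Jacquet kernel as integral} you must show that $\int_{\mathcal{U}}(f\otimes f')(gu,g'u)\,du$ vanishes at \emph{every} pair $(g,g')$, not only on the diagonal. Your computation only controls the diagonal value $f(g)\otimes f'(g)\int_{\mathcal{U}}(\psi\psi')(\rconj{g}u)\,du$, and diagonal vanishing does not force an element of the tensor product to be zero (take $f,f'$ with disjoint supports modulo $LU$).

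This matters in both cases. In the vanishing case, the relevant character is $u\mapsto\psi(\rconj{g}u)\psi'(\rconj{g'}u)$, and its nontriviality for \emph{all} pairs $(g,g')$ is precisely the orbit condition $\psi'\notin\mathcal{O}(\psi^{-1})$; your diagonal condition ``$(\psi\psi')^{g^{-1}}\ne1$ for all $g$'' is only equivalent to $\psi'\ne\psi^{-1}$, so your argument as written would ``prove'' vanishing whenever $\psi'\ne\psi^{-1}$, which is false when $\psi'$ is a nontrivial $Q$-conjugate of $\psi^{-1}$. In the case $\psi'=\psi^{-1}$, the injectivity step (``if the diagonal restriction vanishes then $f\otimes f'$ lies in the $U$-kernel'') again requires the off-diagonal points: when $g'g^{-1}\in LU$ one uses that $f(g)\otimes f'(g)=0$ forces $f(g)=0$ or $f'(g)=0$, hence $f(g)\otimes f'(g')=0$; when $g'g^{-1}\notin LU$ the character is nontrivial and a sufficiently large $\mathcal{U}$ kills the integral. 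This two-variable bookkeeping is exactly what the paper carries out.
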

\begin{proof}[Proof of Lemma~\ref{lemma:nontwisted Jacquet of tensor of two is one}]
Let $f$ belong to the space of $\ind_{LU}^Q\pi_{U,\psi}$. Choose a
small compact open neighborhood of the identity $\mathcal{V}<Q$ and a finite set $\Omega\subset Q$, such that $f$ is fixed by $\mathcal{V}$
and the support of $f$ is contained in $LU\Omega\mathcal{V}$. Similarly let $f'$ belong to the space of $\ind_{L'U}^Q\pi'_{U,\psi'}$ and choose
$\mathcal{V}'$ and $\Omega'$, analogously. For any compact $\mathcal{U}<U$ and $g,g'\in Q$,
\begin{align}\label{eq:nontwisted Jacqut of Jacquet first integral}
\int_{\mathcal{U}}u\cdot(f\otimes f')(g,g')\ du=
f(g)\otimes f'(g')\int_{\mathcal{U}}\psi(\rconj{g}u)\psi'(\rconj{g'}u)\ du.
\end{align}
This clearly vanishes unless $g=luxv$, where $l\in L$, $u\in U$, $x\in\Omega$ and $v\in\mathcal{V}$, and $g'=l'u'x'v'$ (with a similar notation).
In this case the integral equals
\begin{align}\label{eq:nontwisted Jacqut of Jacquet second integral}
\psi(u)\pi_{U,\psi}(l)f(x)\otimes \psi'(u)\pi'_{U,\psi'}(l')f'(x')\int_{\mathcal{U}}\psi(\rconj{xv}u)\psi'(\rconj{x'v'}u)\ du.
\end{align}

Assume $\psi'\not\in\mathcal{O}(\psi^{-1})$. Then for any $y,y'\in Q$,
$u\mapsto\psi(\rconj{y}u)\psi'(\rconj{y'}u)$ is a nontrivial character of $U$. Since $x$ and $x'$ vary in finite sets,
while $v$ and $v'$ vary in compact neighborhoods of the identity, one can choose a large enough compact $\mathcal{U}$ such that the
integral vanishes for all data $x,v,x',v'$. Thus Lemma~\ref{lemma:Jacquet kernel as integral} implies that $f\otimes f'$ vanishes in the Jacquet module.

Now assume $\psi'=\psi^{-1}$. Consider the mapping
\begin{align*}
f\otimes f'\mapsto (f\otimes f')(g)=f(g)\otimes f'(g).
\end{align*}
It is clearly onto $\ind_{LU}^Q(\pi_{U,\psi}\otimes\pi'_{U,\psi^{-1}})$.
If $f\otimes f'=0$ in the Jacquet module, by lemma~\ref{lemma:Jacquet kernel as integral} the \lhs\ of \eqref{eq:nontwisted Jacqut of Jacquet first integral} vanishes for all $g=g'\in Q$,
but then the $du$-integral on the \rhs\ becomes a nonzero constant. This implies $(f\otimes f')(g)=f(g)\otimes f'(g)=0$.

In the other direction assume
$f(g)\otimes f'(g)=0$ for all $g\in Q$. We must show that
\eqref{eq:nontwisted Jacqut of Jacquet first integral} vanishes for all $g,g'\in Q$. It clearly does if $g=g'$. Take $g\ne g'$. Using the notation above write $g=luxv$ and $g'=l'u'x'v'$ and consider the \rhs\ of \eqref{eq:nontwisted Jacqut of Jacquet first integral}. 

If the character $u\mapsto \psi(\rconj{g}u)\psi^{-1}(\rconj{g'}u)$ is trivial on $U$, $g'g^{-1}=l''u''\in LU$, but then
\begin{align*}
f(g)\otimes f'(g')=
f(g)\otimes\psi^{-1}(u'')\pi'_{U,\psi^{-1}}(l'')f'(g).
\end{align*}
By our assumption either $f(g)=0$ or $f'(g)=0$ and in the latter case, $\pi'_{U,\psi^{-1}}(l'')f'(g)=0$ for any $l''\in L$. Thus
$f(g)\otimes f'(g')=0$.

For a pair $g,g'$ such that this character is a nontrivial character,
as above using the fact that $x,x'$ vary in finite sets and $v,v'$ belong to compact subgroups, one can choose a large
enough $\mathcal{U}$ so that this integral vanishes.

Finally to pass from pure tensors to the general case, note that a general element of $\ind_{LU}^Q\pi_{U,\psi}\otimes\ind_{LU}^Q\pi'_{U,\psi^{-1}}$ can be written as a finite sum $\sum_i f_{i}\otimes f'_i$ with the following property:
for any subset of indices $i_1,\ldots,i_r$ such that the supports of $f_{i_j}\otimes f'_{i_j}$ coincide for $1\leq j\leq r$, for each $g,g'$ in the
support of $f_{i_1}\otimes f'_{i_1}$, the vectors
$(f_{i_1}\otimes f'_{i_1})(g,g'),\ldots,(f_{i_r}\otimes f'_{i_r})(g,g')$ are linearly independent in $\pi_{U,\psi}\otimes\pi'_{U,\psi^{-1}}$. Then
$(\sum_i f_{i}\otimes f'_i)(g,g')=0$ if and only if for each $i$, $(f_{i}\otimes f'_i)(g,g')=0$.
\end{proof}

\subsection{Metaplectic tensor}\label{subsection:The metaplectic tensor}
Let $\alpha$ be a partition of $n$. Irreducible representations of $M_{\alpha}$ can be described using the tensor product.
The situation in $\widetilde{M}_{\alpha}$ is more complicated, because the direct factors of $M_{\alpha}$ do not commute in the cover, then
the tensor construction breaks down. An alternative definition of a metaplectic tensor was developed by Kable \cite{Kable}, we briefly describe
his construction. Other studies of metaplectic tensor include \cite{FK,Su2,Mezo,Tk2}. The definition of Kable has several advantages, including the variety
of useful properties it enjoys and its relation to the exceptional representations (see
\eqref{eq:Jacquet nontwisted factors throuh metaplectic} and \eqref{eq:result of Kable for Jacquet module of exceptional} below).

For any closed subgroup $H<\GL_n$, Let $H^{\square}=\{h\in H:\det{h}\in F^{*2}\}$.
If $\pi\in\Alg{\widetilde{H}}$, denote its restriction to
$\widetilde{H}^{\square}$ by $\pi^{\square}$.

Let $\alpha_i$ be a partition of $n_i$ and $\pi_i\in\Alg_{\mathrm{irr}}{\widetilde{M}_{\alpha_i}}$ be genuine, $i=1,2$. Put
$n=n_1+n_2$ and let $\alpha=(\alpha_1,\alpha_2)$, i.e., $\alpha$ is the concatenation of $\alpha_1$ and $\alpha_2$. Then
\begin{align*}
(\widetilde{M}_{\alpha_1}^{\square},\widetilde{M}_{\alpha_2}^{\square}),\qquad(\widetilde{M}_{\alpha_1}^{\square},\widetilde{M}_{\alpha_2}),\qquad
(\widetilde{M}_{\alpha_1},\widetilde{M}_{\alpha_2}^{\square})
\end{align*}
are pairs of commuting subgroups (see \eqref{eq:block-compatibility}). The representation $\pi_1^{\square}\otimes\pi_2^{\square}$ of $p^{-1}(M_{\alpha_1}^{\square}\times M_{\alpha_2}^{\square})$ is genuine and similarly,
$\pi_1^{\square}\otimes\pi_2$ and $\pi_1\otimes\pi_2^{\square}$. Let $\omega$ be a genuine character of $C_{\widetilde{\GL}_n}$
which agrees with
\begin{align*}
\omega_{\pi_1}|_{p^{-1}(C_{\GL_{n_1}}^2)}\otimes\omega_{\pi_2}|_{p^{-1}(C_{\GL_{n_2}}^2)}
\end{align*}
on $p^{-1}(C_{\GL_{n}}^2)$. The metaplectic tensor $\pi_1\widetilde{\otimes}_{\omega}\pi_2$ was defined by Kable \cite{Kable} as an irreducible
summand of
\begin{align}\label{space:induced space of metaplectic tensor}
\ind_{p^{-1}(M_{\alpha_1}^{\square}\times M_{\alpha_2}^{\square})}^{\widetilde{M}_{\alpha}}(\pi_1^{\square}\otimes\pi_2^{\square}),
\end{align}
on which $C_{\widetilde{\GL}_{n}}$ acts by $\omega$. 
Note that the definition in \cite{Kable} was more general and included genuine
admissible finite length representations, which admit a central character. Here we will only encounter the case of irreducible representations.

A more concrete description was given in \cite{Kable} (Corollary~3.1): if $n_2$ is even or $n_1$ and $n_2$ are odd, there is an irreducible
summand $\sigma\subset\pi_2^{\square}$ such that
\begin{align}\label{eq:refinement for tensor}
\pi_1\widetilde{\otimes}_{\omega}\pi_2=
\ind_{p^{-1}(M_{\alpha_1}\times M_{\alpha_2}^{\square})}^{\widetilde{M}_{\alpha}}(\pi_1\otimes\sigma).
\end{align}
If both $n_2$ and $n_1$ are even, $\sigma$ is arbitrary; if $n_2$ is even and $n_1$ is odd, $\sigma$ is uniquely determined by
the condition $\omega=\omega_{\pi_1}\otimes\omega_{\sigma}$ on $C_{\widetilde{\GL}_n}$; when both are odd, $\sigma=\pi_2^{\square}$.
Otherwise $n_2$ is odd and $n_1$ is even, the definition is similar with the roles of $n_1$ and $n_2$ reversed.

The metaplectic tensor is associative (\cite{Kable} Proposition~3.5) and if $\beta_i$ is a subpartition of $\alpha_i$, i.e.,
$\beta_i$ is a partition of $n_i$ with $Q_{\beta_i}<Q_{\alpha_i}$,
\begin{align}\label{eq:Jacquet nontwisted factors throuh metaplectic}
(\pi_1\widetilde{\otimes}_{\omega}\pi_2)_{U_{\beta_1}\times U_{\beta_2}}=
(\pi_1)_{U_{\beta_1}}\widetilde{\otimes}_{\omega}(\pi_2)_{U_{\beta_2}}
\end{align}
(\cite{Kable} Proposition~4.1). 

Kable did not consider the behavior of the metaplectic tensor with respect to twisted Jacquet functors, i.e., with a nontrivial character $\psi$. Such modules are not representations of standard Levi factors.
One may attempt to extend the definition, to some extent, to include
these cases. For our purposes it will suffice to use restriction, in order to compute twisted Jacquet functors applied to the metaplectic tensor.

\begin{claim}\label{claim:Mackey theory applied to metaplectic tensor}
Let $\alpha_i$ be a partition of $n_i$, $\pi_i\in\Alg_{\mathrm{irr}}{\widetilde{M}_{\alpha_i}}$ be genuine, $i=1,2$. Then
\begin{align*}
(\pi_1\widetilde{\otimes}_{\omega}\pi_2)|_{p^{-1}(M_{\alpha_1}^{\square}\times M_{\alpha_2})}=\begin{dcases}\pi_1^{\square}\otimes\pi_2
&\text{even $n_2$,}\\
\pi_1^{\square}\otimes\bigoplus_{g\in\lmodulo{\widetilde{M}_{\alpha_2}^{\square}}{\widetilde{M}_{\alpha_2}}}\chi_g\pi_2
&\text{odd $n_1$ and $n_2$,}\\
\bigoplus_{g\in\lmodulo{\widetilde{M}_{\alpha_1}^{\square}}{\widetilde{M}_{\alpha_1}}}{\rconj{g}\sigma}\otimes\chi_g\pi_2&\text{even $n_1$, odd $n_2$.}
\end{dcases}
\end{align*}
Here $\chi_g$ is the character of $\lmodulo{\widetilde{M}_{\alpha_i}^{\square}}{\widetilde{M}_{\alpha_i}}$ given by $\chi_g(x)=(\det{x},\det{g})_2$ and
$\sigma$ is an irreducible summand of $\pi_1^{\square}$.
Similar results hold mutatis mutandis when restricting to $p^{-1}(M_{\alpha_1}\times M_{\alpha_2}^{\square})$. Consequently 
\begin{align*}
(\pi_1\widetilde{\otimes}_{\omega}\pi_2)_{p^{-1}(M_{\alpha_1}^{\square}\times M_{\alpha_2}^{\square})}=
\begin{dcases}
[F^*:F^{*2}]\pi_1^{\square}\otimes\pi_2^{\square}&\text{$n_1$ and $n_2$ are odd,}\\
\pi_1^{\square}\otimes\pi_2^{\square}&\text{otherwise.}
\end{dcases}
\end{align*}
\end{claim}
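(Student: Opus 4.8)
The plan is to deduce the restriction formulas directly from the definition \eqref{space:induced space of metaplectic tensor} of the metaplectic tensor via Mackey theory, and then assemble the two-sided restriction statement by composing the one-sided ones. First I would recall that $\pi_1\widetilde{\otimes}_{\omega}\pi_2$ is an irreducible summand of $\ind_{p^{-1}(M_{\alpha_1}^{\square}\times M_{\alpha_2}^{\square})}^{\widetilde{M}_{\alpha}}(\pi_1^{\square}\otimes\pi_2^{\square})$, and use the more concrete description \eqref{eq:refinement for tensor} in the cases where it applies, so that one induction step is already ``undone''. In the even-$n_2$ case \eqref{eq:refinement for tensor} gives $\pi_1\widetilde{\otimes}_{\omega}\pi_2=\ind_{p^{-1}(M_{\alpha_1}\times M_{\alpha_2}^{\square})}^{\widetilde{M}_{\alpha}}(\pi_1\otimes\sigma)$ with $\sigma\subset\pi_2^{\square}$; restricting this to $p^{-1}(M_{\alpha_1}^{\square}\times M_{\alpha_2})$ and applying the Mackey formula, the single relevant double coset is the trivial one (since $p^{-1}(M_{\alpha_1}\times M_{\alpha_2}^{\square})\cdot p^{-1}(M_{\alpha_1}^{\square}\times M_{\alpha_2})=\widetilde{M}_{\alpha}$ and the two subgroups are each normal in the product up to the genuine center), so the restriction is $\pi_1^{\square}\otimes\ind_{\widetilde{M}_{\alpha_2}^{\square}}^{\widetilde{M}_{\alpha_2}}\sigma=\pi_1^{\square}\otimes\pi_2$ once one checks $\ind_{\widetilde{M}_{\alpha_2}^{\square}}^{\widetilde{M}_{\alpha_2}}\sigma=\pi_2$, which holds because $\pi_2$ is irreducible and $\sigma$ is one of its $\widetilde{M}_{\alpha_2}^{\square}$-summands (Clifford theory for the index-$[F^*:F^{*2}]$ normal subgroup $\widetilde{M}_{\alpha_2}^{\square}$).

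Next I would treat the two odd cases in the same spirit. When both $n_1$ and $n_2$ are odd, \eqref{eq:refinement for tensor} gives $\pi_1\widetilde{\otimes}_{\omega}\pi_2=\ind_{p^{-1}(M_{\alpha_1}\times M_{\alpha_2}^{\square})}^{\widetilde{M}_{\alpha}}(\pi_1\otimes\pi_2^{\square})$; restricting to $p^{-1}(M_{\alpha_1}^{\square}\times M_{\alpha_2})$, the double coset space is $\lmodulo{\widetilde{M}_{\alpha_2}^{\square}}{\widetilde{M}_{\alpha_2}}$, and for a representative $g$ the conjugate of $\pi_1\otimes\pi_2^{\square}$, restricted back, is $\pi_1^{\square}\otimes\rconj{g}(\pi_2^{\square})$; using the block-compatibility cocycle \eqref{eq:block-compatibility} one identifies the effect of $\rconj{g}$ on the $\widetilde{M}_{\alpha_1}^{\square}$-factor as twisting by the character $\chi_g(x)=(\det x,\det g)_2$ (which on $M_{\alpha_1}^{\square}$ is trivial, hence does not disturb $\pi_1^{\square}$) and on the second factor as carrying $\pi_2^{\square}$ to another summand of $\pi_2$; collecting these gives $\pi_1^{\square}\otimes\bigoplus_{g}\chi_g\pi_2$, where the $\chi_g$ bookkeeping records which summand. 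The even-$n_1$, odd-$n_2$ case is the mirror image, using the version of \eqref{eq:refinement for tensor} with the roles of $n_1,n_2$ reversed and writing $\sigma\subset\pi_1^{\square}$; the Mackey analysis over $\lmodulo{\widetilde{M}_{\alpha_1}^{\square}}{\widetilde{M}_{\alpha_1}}$ produces $\bigoplus_g \rconj{g}\sigma\otimes\chi_g\pi_2$. The ``mutatis mutandis'' statement for restriction to $p^{-1}(M_{\alpha_1}\times M_{\alpha_2}^{\square})$ follows by the same computation with the two coordinates interchanged, invoking the symmetry of \eqref{eq:block-compatibility} in the two blocks.

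Finally, for the consequence about $(\pi_1\widetilde{\otimes}_{\omega}\pi_2)_{p^{-1}(M_{\alpha_1}^{\square}\times M_{\alpha_2}^{\square})}$ I would simply compose: restrict first to $p^{-1}(M_{\alpha_1}^{\square}\times M_{\alpha_2})$ by the formula just proved, then restrict each summand from $p^{-1}(M_{\alpha_1}^{\square}\times M_{\alpha_2})$ to $p^{-1}(M_{\alpha_1}^{\square}\times M_{\alpha_2}^{\square})$. When not both $n_i$ are odd this gives $\pi_1^{\square}\otimes\pi_2^{\square}$ after noting $\pi_2|_{\widetilde{M}_{\alpha_2}^{\square}}$ decomposes into a single $\widetilde{M}_{\alpha_2}^{\square}$-orbit of summands all isomorphic to $\pi_2^{\square}$ (here one uses that $\sigma$ in the even-$n_2$ case is $\widetilde{M}_{\alpha_2}$-conjugate to $\pi_2^{\square}$, and in the even-$n_1$/odd-$n_2$ case, after the first restriction $\pi_2$ appears and its further restriction is again $\pi_2^{\square}$ up to $\chi_g$-twists which are trivial on $M_{\alpha_2}^{\square}$); the multiplicity stays one because each restriction step in this branch has trivial Mackey sum. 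When both $n_1,n_2$ are odd, the first restriction already introduces the direct sum over $\lmodulo{\widetilde{M}_{\alpha_2}^{\square}}{\widetilde{M}_{\alpha_2}}$, whose cardinality is $[F^*:F^{*2}]$, and each of these further restricts to a copy of $\pi_1^{\square}\otimes\pi_2^{\square}$, yielding the multiplicity $[F^*:F^{*2}]$. The main obstacle I anticipate is the careful bookkeeping in the odd cases: tracking precisely how conjugation by coset representatives acts through the block-compatibility cocycle \eqref{eq:block-compatibility}, verifying that the resulting twist is exactly $\chi_g$, and confirming it is trivial on the ``square'' subgroups so that the pieces $\pi_i^{\square}$ are genuinely unchanged — everything else is a routine application of Mackey theory and Clifford theory for the normal subgroup of index $[F^*:F^{*2}]$.
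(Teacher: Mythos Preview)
Your approach is essentially the same as the paper's: start from the concrete description \eqref{eq:refinement for tensor}, apply Mackey theory together with the block-compatibility formula \eqref{eq:block-compatibility}, and use Clifford theory for the finite-index normal subgroup $\widetilde{M}_{\alpha_i}^{\square}$ (this is precisely what the paper invokes via Kable's Propositions~3.1 and~3.2). The paper's own proof is only a pointer to these ingredients and to \cite{me9}, and the final restriction to $p^{-1}(M_{\alpha_1}^{\square}\times M_{\alpha_2}^{\square})$ is quoted directly from Kable's Theorem~3.1 rather than re-derived by composition as you do.

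One imprecision worth correcting: in the case where both $n_1$ and $n_2$ are odd, you write that the double coset space $p^{-1}(M_{\alpha_1}\times M_{\alpha_2}^{\square})\backslash \widetilde{M}_{\alpha}/p^{-1}(M_{\alpha_1}^{\square}\times M_{\alpha_2})$ is $\lmodulo{\widetilde{M}_{\alpha_2}^{\square}}{\widetilde{M}_{\alpha_2}}$. In fact these two subgroups already multiply to all of $\widetilde{M}_{\alpha}$, so there is a single double coset; Mackey then gives $\ind_{p^{-1}(M_{\alpha_1}^{\square}\times M_{\alpha_2}^{\square})}^{p^{-1}(M_{\alpha_1}^{\square}\times M_{\alpha_2})}(\pi_1^{\square}\otimes\pi_2^{\square})=\pi_1^{\square}\otimes\ind_{\widetilde{M}_{\alpha_2}^{\square}}^{\widetilde{M}_{\alpha_2}}\pi_2^{\square}$, and the direct sum over $g$ arises from decomposing this \emph{inner} induction (since $\pi_2^{\square}$ is irreducible when $n_2$ is odd, inducing it back up yields $\bigoplus_g\chi_g\pi_2$). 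Your conclusion is correct; only the bookkeeping of where the sum over $g$ comes from needs adjusting.
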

\begin{proof}[Proof of Claim~\ref{claim:Mackey theory applied to metaplectic tensor}]
The assertions follow from \eqref{eq:refinement for tensor} by Mackey's theory, using \cite{Kable} (Propositions~3.1 and 3.2) and \eqref{eq:block-compatibility}. For details see \cite{me9}. Note that the restriction to $p^{-1}(M_{\alpha_1}^{\square}\times M_{\alpha_2}^{\square})$ was already computed in \cite{Kable} (Theorem~3.1).
\end{proof}

This claim has the following consequence, which will be used repeatedly.
\begin{corollary}\label{corollary:hom of tensor separates}
Let $\alpha_i$ be a partition of $n_i$, $\pi_i,\pi'_i\in\Alg_{\mathrm{irr}}{\widetilde{M}_{\alpha_i}}$ be genuine for $i=1,2$, and $\alpha=(\alpha_1,\alpha_2)$. Assume
that for an admissible representation $\tau_1\otimes\tau_2\in\Alg{M_{\alpha}}$,
\begin{align*}
\Hom_{M_{\alpha}}((\pi_1\widetilde{\otimes}_{\omega}\pi_2)\otimes(\pi_1'\widetilde{\otimes}_{\omega'}\pi'_2),\tau_1\otimes\tau_2)\ne0.
\end{align*}
Then for each $i$ there is some square-trivial character $\chi_i$ of $F^*$, such that
\begin{align*}
\Hom_{M_{\alpha_i}}(\pi_i\otimes\pi'_i,\chi_i\tau_i)\ne0.
\end{align*}
Moreover, if $n_i$ is even, $\chi_i$ is in fact trivial.
\end{corollary}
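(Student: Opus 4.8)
The plan is to reduce the statement to the restriction formulas of Claim~\ref{claim:Mackey theory applied to metaplectic tensor} by peeling off one metaplectic factor at a time. First I would observe that $M_{\alpha}=M_{\alpha_1}\times M_{\alpha_2}$ contains the subgroup $M_{\alpha_1}\times M_{\alpha_2}^{\square}$, and that since $\tau_1\otimes\tau_2$ is an outer tensor on $M_{\alpha}$, a nonzero $M_{\alpha}$-equivariant map restricts to a nonzero $p^{-1}(M_{\alpha_1}\times M_{\alpha_2}^{\square})$-equivariant map. Applying the relevant case of Claim~\ref{claim:Mackey theory applied to metaplectic tensor} (and its ``mutatis mutandis'' version for restriction to $p^{-1}(M_{\alpha_1}\times M_{\alpha_2}^{\square})$) to both $\pi_1\widetilde{\otimes}_{\omega}\pi_2$ and $\pi_1'\widetilde{\otimes}_{\omega'}\pi_2'$, the restricted source becomes a finite direct sum of representations of the shape $(\text{rep.\ of }\widetilde{M}_{\alpha_1})\otimes(\chi_g\pi_2)$, with the $\widetilde{M}_{\alpha_1}$-factor being $\pi_1$ itself when $n_2$ is even (or $n_1$ even and $n_2$ odd, up to the summands $\rconj{g}\sigma$). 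Since $\Hom$ out of a direct sum is the product of the $\Hom$'s, some summand must map nontrivially to $\tau_1\otimes\tau_2$; fixing such a summand isolates a twist $\chi_2$ (a square-trivial character of $F^*$, arising as one of the $\chi_g$ or as the correction between $\omega$ and $\omega'$) so that, after absorbing $\chi_2$ into $\tau_2$, the $M_{\alpha_2}$-part of the $\Hom$ reads $\Hom_{\widetilde{M}_{\alpha_2}}(\pi_2\otimes\pi_2',\chi_2\tau_2)\ne 0$ — genuineness cancels in the tensor, so this is a bona fide $M_{\alpha_2}$-statement. When $n_2$ is even there is no sum over $g$ on the second factor, so $\chi_2$ is trivial; this gives the ``moreover'' clause.

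The symmetric argument, restricting instead to $p^{-1}(M_{\alpha_1}^{\square}\times M_{\alpha_2})$ and using the first and third cases of Claim~\ref{claim:Mackey theory applied to metaplectic tensor}, produces the analogous $\chi_1$ with $\Hom_{M_{\alpha_1}}(\pi_1\otimes\pi_1',\chi_1\tau_1)\ne0$, trivial when $n_1$ is even. I would present the two factors together, noting that the only case needing slight care is $n_1$ and $n_2$ both odd, where Claim~\ref{claim:Mackey theory applied to metaplectic tensor} gives a sum over $\lmodulo{\widetilde{M}_{\alpha_2}^{\square}}{\widetilde{M}_{\alpha_2}}\cong F^*/F^{*2}$ on the second factor even after restricting only the second coordinate; the same bookkeeping applies, one just picks the surviving coset.

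The one genuine subtlety — and the step I expect to be the main obstacle — is making sure the square-trivial twist is a twist by a character of $F^*$ \emph{pulled back via $\det$} to all of $\widetilde{M}_{\alpha_i}$, not merely a character of the component-group $\lmodulo{\widetilde{M}_{\alpha_i}^{\square}}{\widetilde{M}_{\alpha_i}}$. Here I would use that the latter group is $M_{\alpha_i}/M_{\alpha_i}^{\square}$, on which $\det$ induces an isomorphism onto $F^*/F^{*2}$ on each block (or rather, a character of it is detected by $\det$ of one suitable block), so that the characters $\chi_g(x)=(\det x,\det g)_2$ appearing in Claim~\ref{claim:Mackey theory applied to metaplectic tensor} are exactly restrictions of Hilbert-symbol characters $x\mapsto(\det x,a)_2$ of $F^*$; these are square-trivial and non-genuine, hence legitimately absorbable into $\tau_i$. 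Once this identification is in place the rest is the formal $\Hom$-from-a-direct-sum argument sketched above, and no further computation is required.
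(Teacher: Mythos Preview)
Your approach is the paper's, but you have consistently swapped which factor to square. To conclude $\Hom_{M_{\alpha_2}}(\pi_2\otimes\pi_2',\chi_2\tau_2)\ne0$ over the \emph{full} $M_{\alpha_2}$, you must restrict to a subgroup containing all of $M_{\alpha_2}$, namely $p^{-1}(M_{\alpha_1}^{\square}\times M_{\alpha_2})$; Claim~\ref{claim:Mackey theory applied to metaplectic tensor} then exhibits the second factor as $\chi_g\pi_2$ (a representation of the full $\widetilde{M}_{\alpha_2}$), while the first factor degenerates to $\rconj{g}\sigma$ or $\pi_1^{\square}$. Restricting to $p^{-1}(M_{\alpha_1}\times M_{\alpha_2}^{\square})$ as you wrote yields only a summand of $\pi_2^{\square}$ on the second factor and the full (twisted) $\pi_1$ on the first --- the opposite of what you need. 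The same swap occurs in your ``symmetric argument.'' Once this is corrected, your argument is exactly the paper's: pick a summand $(\rconj{g}\sigma\otimes\chi_g\pi_2)\otimes(\rconj{g'}\sigma'\otimes\chi_{g'}\pi_2')$ on which the Hom survives, and set $\chi_2^{-1}=\chi_g\chi_{g'}$; when $n_2$ is even both $\chi_g,\chi_{g'}$ are trivial.

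One step you pass over: going from $\Hom_{M_{\alpha_1}^{\square}\times M_{\alpha_2}}(A\otimes B,\tau_1\otimes\tau_2)\ne0$ to $\Hom_{M_{\alpha_2}}(B,\tau_2)\ne0$ is not purely formal; it uses the admissibility hypothesis on $\tau_1,\tau_2$ to split the tensor Hom. The paper flags this explicitly, and you should too. Your discussion of why the $\chi_g$ are square-trivial characters of $F^*$ pulled back via $\det$ is correct and is exactly what the paper leaves implicit.
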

\begin{proof}[Proof of Corollary~\ref{corollary:hom of tensor separates}]
According to Claim~\ref{claim:Mackey theory applied to metaplectic tensor} and with the same notation, $(\pi_1\widetilde{\otimes}_{\omega}\pi_2)|_{p^{-1}(M_{\alpha_1}^{\square}\times M_{\alpha_2})}$ is
a finite direct sum of representations $\rconj{g}\sigma\otimes\chi_g\pi_2$. Note that this form includes all the possibilities listed in the claim
(e.g., $\sigma$ could be $\pi_1^{\square}$), and if $n_2$ is even, $\chi_g=1$. We use a similar notation
$\rconj{g'}\sigma'\otimes\chi_{g'}\pi'_2$ for this restriction of $\pi'_1\widetilde{\otimes}_{\omega'}\pi'_2$.
Hence for some $g$ and $g'$,
\begin{align*}
\Hom_{M_{\alpha_1}^{\square}\times M_{\alpha_2}}((\rconj{g}\sigma\otimes\rconj{g'}\sigma')\otimes(\chi_g\pi_2\otimes\chi_{g'}\pi'_2),\tau_1\otimes\tau_2)\ne0.
\end{align*}
Thus if $\chi_2^{-1}=\chi_g\chi_{g'}$, $\Hom_{M_{\alpha_2}}(\pi_2\otimes\pi'_2,\chi_2\tau_2)\ne0$ as claimed (here we used the admissibility of $\tau_1$ and $\tau_2$). If $n_2$ is even, $\chi_g=\chi_{g'}=1$ hence $\chi_2=1$. The other case is symmetric.
\end{proof}

\subsection{Exceptional representations}\label{subsection:The exceptional representations}
The exceptional representations were introduced and studied by Kazhdan and Patterson \cite{KP}.
Let $\xi$ be an exceptional character, that is, $\xi$ is a genuine character
of $C_{\widetilde{T}_n}=p^{-1}(T_n^2)C_{\widetilde{GL}_n}$, such that
\begin{align*}
\xi(\mathfrak{s}(\mathrm{diag}(I_{i-1},x^2,x^{-2},I_{n-i-1})))=|x|,\qquad\forall 1\leq i\leq n-1,x\in F^*.
\end{align*}
Let $\rho(\xi)$ be the corresponding genuine irreducible representation of $\widetilde{T}_n$ (for $n>1$,
$\widetilde{T}_n$ is a $2$-step nilpotent subgroup). The principal series representation $\Ind_{\widetilde{B}_{n}}^{\widetilde{GL}_n}(\delta_{B_n}^{1/2}\rho(\chi))$
has a unique irreducible quotient $\theta$, called an exceptional representation.

The exceptional characters $\xi$ are parameterized in the following manner. Let $\chi$ be a character of $F^*$. Let $\gamma:F^*\rightarrow\mathbb{C}^*$ be a mapping such that $\gamma(xy)=\gamma(x)\gamma(y)(x,y)_2^{\lfloor n/2\rfloor}$ and $\gamma(x^2)=1$ for all $x,y\in F^*$. We call such a mapping a pseudo-character. Note that the definition of a pseudo-character depends on $n$, to explicate this we will occasionally call it an $n$-pseudo-character. Define
\begin{align*}
\xi_{\chi,\gamma}(\epsilon\mathfrak{s}(zI_n)\mathfrak{s}(t))=\epsilon\gamma(z)\chi(z^n\det{t})\delta_{B_n}^{1/4}(t),\qquad \epsilon\in\mu_2,t\in T_n^2, z\in F^{*\mathe}.
\end{align*}
When $n$ is even, the choice of $\gamma$ is irrelevant. When $n\equiv1\ (4)$, $\gamma$ is a square-trivial character of $F^*$. If
$n\equiv3\ (4)$, $\gamma=\gamma_{\psi}$ for some nontrivial additive character $\psi$ of $F$. Note that $\sigma(zI_n,z'I_n)=(z,z')_2^{\lfloor n/2\rfloor}$.
The corresponding exceptional
representation will be denoted $\theta_{n,\chi,\gamma}$, or $\theta_{\chi,\gamma}$ when $n$ is clear from the context. Furthermore, sometimes we
simply denote an exceptional representation by $\theta_{n}$ or $\theta$, when the data $\chi$ and $\gamma$ do not affect the validity of the argument.
Note that $\omega_{\theta_{\chi,\gamma}}(\mathfrak{s}(zI_n))=\chi(z)^n\gamma(z)$.

The following simple claim was proved in \cite{me10} (in the proof of Claim~4.1):
\begin{claim}\label{claim:taking characters out of exceptional}
We have $\theta_{\chi,\gamma}=\chi\theta_{1,\gamma}$.
Additionally, if $\gamma_0$ is another pseudo-character,
$\theta_{\chi,\gamma}=\eta\theta_{\chi,\gamma_0}$ for some square-trivial character $\eta$ of $F^*$.
\end{claim}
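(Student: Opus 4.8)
\textbf{Proof proposal for Claim~\ref{claim:taking characters out of exceptional}.}

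The plan is to compare exceptional characters directly and then transport the comparison through the principal series construction. First I would observe that the unique irreducible quotient $\theta$ of $\Ind_{\widetilde{B}_{n}}^{\widetilde{GL}_n}(\delta_{B_n}^{1/2}\rho(\xi))$ depends only on the exceptional character $\xi$, and that twisting by a (non-genuine) character $\chi$ of $F^*$ pulled back to $\widetilde{\GL}_n$ via $\det$ commutes with parabolic induction and with passing to the unique irreducible quotient. So it suffices to show that $\xi_{\chi,\gamma}$ and $\chi\cdot\xi_{1,\gamma}$ coincide as genuine characters of $C_{\widetilde{T}_n}=p^{-1}(T_n^2)C_{\widetilde{\GL}_n}$, where on the right $\chi$ is evaluated via $\det$. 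For the first identity I would just plug into the defining formula: on $\epsilon\mathfrak{s}(zI_n)\mathfrak{s}(t)$ with $z\in F^{*\mathe}$, $t\in T_n^2$, we have $\xi_{\chi,\gamma}=\epsilon\gamma(z)\chi(z^n\det t)\delta_{B_n}^{1/4}(t)$, whereas $\chi\cdot\xi_{1,\gamma}$ multiplies $\xi_{1,\gamma}(\epsilon\mathfrak{s}(zI_n)\mathfrak{s}(t))=\epsilon\gamma(z)\delta_{B_n}^{1/4}(t)$ by $\chi(\det(zI_n\cdot t))=\chi(z^n\det t)$; these agree, and since $C_{\widetilde{T}_n}$ is generated by such elements the two characters are equal. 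Hence $\rho(\xi_{\chi,\gamma})=\chi\rho(\xi_{1,\gamma})$ and therefore $\theta_{\chi,\gamma}=\chi\theta_{1,\gamma}$.

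For the second assertion, fix two pseudo-characters $\gamma$ and $\gamma_0$ (recall these are defined relative to $n$, so both satisfy $\gamma(xy)\gamma(x)^{-1}\gamma(y)^{-1}=(x,y)_2^{\lfloor n/2\rfloor}=\gamma_0(xy)\gamma_0(x)^{-1}\gamma_0(y)^{-1}$). The key point is that $\eta:=\gamma/\gamma_0$ is then an honest character of $F^*$: the bimultiplicative defect cancels, so $\eta(xy)=\eta(x)\eta(y)$, and $\eta(x^2)=\gamma(x^2)/\gamma_0(x^2)=1$, so $\eta$ is moreover square-trivial. Comparing the defining formulas for $\xi_{\chi,\gamma}$ and $\xi_{\chi,\gamma_0}$, the only place $\gamma$ enters is the factor $\gamma(z)$, so $\xi_{\chi,\gamma}(\epsilon\mathfrak{s}(zI_n)\mathfrak{s}(t))=\eta(z)\,\xi_{\chi,\gamma_0}(\epsilon\mathfrak{s}(zI_n)\mathfrak{s}(t))$. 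On the other hand, $\eta$ pulled back via $\det$ takes the value $\eta(\det(zI_n\cdot t))=\eta(z^n)\eta(\det t)=\eta(z^n)$, using $\eta(\det t)=1$ since $\det t\in F^{*2}$; and $\eta(z^n)=\eta(z)$ when $n$ is odd while $\eta(z^n)=1=\eta(z)$ when $n$ is even, because in the even case $z\in F^{*\mathe}=F^{*2}$. Either way the pulled-back character $\eta$ restricted to $C_{\widetilde{T}_n}$ equals the ratio $\xi_{\chi,\gamma}/\xi_{\chi,\gamma_0}$, so $\xi_{\chi,\gamma}=\eta\,\xi_{\chi,\gamma_0}$ and consequently $\theta_{\chi,\gamma}=\eta\theta_{\chi,\gamma_0}$ by the same twisting-commutes-with-induction argument as before.

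The only genuinely delicate point — the one I would present carefully rather than wave through — is the bookkeeping of how the twist $\chi$ or $\eta$, which a priori is only specified on $F^*$, acts on the generators $\epsilon\mathfrak{s}(zI_n)\mathfrak{s}(t)$ of $C_{\widetilde{T}_n}$ once pulled back through $\det$; in particular one must check that the scalar-matrix exponent $z^n$ and the square-torus determinant $\det t$ are handled consistently with the parity of $n$ (this is exactly where $\mathe$ and the distinction $n$ even/odd enters), and that the section $\mathfrak{s}$ and cocycle $\sigma$ do not introduce extra signs — here the identity $\sigma(zI_n,z'I_n)=(z,z')_2^{\lfloor n/2\rfloor}$ recorded in the excerpt is what guarantees $\xi_{\chi,\gamma}$ is well-defined on $C_{\widetilde{T}_n}$ in the first place, and the same identity shows the ratio computation above is insensitive to the choice of coset representatives. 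Everything else is a direct substitution into the displayed formula for $\xi_{\chi,\gamma}$.
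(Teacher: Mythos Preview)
Your proposal is correct and follows the same approach as the paper: the key point is precisely that $\gamma/\gamma_0$ is a square-trivial character of $F^*$, which is exactly what the paper's (one-line) proof asserts. Your write-up simply spells out the verification on generators of $C_{\widetilde{T}_n}$ and the parity bookkeeping with $\mathe$ that the paper leaves implicit.
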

\begin{proof}[Proof of Claim~\ref{claim:taking characters out of exceptional}]
The proof follows easily from the fact that $\gamma/\gamma_0$ is a square-trivial character of $F^*$.
\end{proof}

The Jacquet functor carries exceptional representations into exceptional representations of Levi subgroups. The following result is due to Kable \cite{Kable} (Theorem~5.1),
\begin{align}\label{eq:result of Kable for Jacquet module of exceptional}
(\theta_{n_1+n_2,\chi,\gamma})_{U_{n_1,n_2}}=\delta_{Q_{n_1,n_2}}^{1/4}\theta_{n_1,\chi,\gamma_1}\widetilde{\otimes}_{\chi\gamma}\theta_{n_2,\chi,\gamma_2}.
\end{align}
Here $\gamma_1$ and $\gamma_2$ are arbitrary (in \cite{Kable} this appears with $\chi=1$ and with the normalized Jacquet functor).
On the \rhs\ $\chi\gamma$ is regarded as the character $\epsilon\mathfrak{s}(zI_n)\mapsto\epsilon\chi(z)^n\gamma(z)$.

Kazhdan and Patterson \cite{KP} (Section~I.3, see also \cite{BG} p.~145 and \cite{Kable} Theorem~5.4) proved that for $n\geq3$, if $|2|\ne1$ in $F$, the
exceptional representations do not have Whittaker models. For $n=3$, Flicker, Kazhdan and Savin \cite{FKS} (Lemma~6) used global methods to
extend this result to the case $|2|=1$. The following theorem completes the proof that $\theta$ is non-generic also when $|2|=1$ and $n>3$.
\begin{theorem}\label{theorem:theta non generic n greater than 3}
For any $n\geq3$, $\theta=\theta_n$ does not have a Whittaker model.
\end{theorem}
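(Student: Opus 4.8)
The plan is to reduce the dyadic case $n > 3$ to the case $n = 3$, which is already known by Flicker--Kazhdan--Savin, using the Jacquet module computation \eqref{eq:result of Kable for Jacquet module of exceptional} together with the fact that the (twisted) Jacquet functor along a unipotent radical commutes with taking Whittaker functionals in an appropriate sense. Concretely, suppose for contradiction that $\theta = \theta_n$ admits a nonzero $\psi$-Whittaker functional for some $n > 3$. A Whittaker functional on $\theta$ is the same as a Whittaker functional on the top twisted Jacquet module of $\theta$ along a suitable unipotent subgroup, obtained by factoring the full unipotent radical $N_n$ through the unipotent radical $U_{n-k,k}$ of a maximal parabolic. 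Using that $N_n = U_{n-k,k} \rtimes (N_{n-k} \times N_k)$ and that the character $\psi$ of $N_n$ restricts on $U_{n-k,k}$ to a character in the orbit of $\psi_j$ with $j = \min(n-k,k)$ (the "most generic" orbit), the existence of a Whittaker functional on $\theta$ forces the twisted Jacquet module $\theta_{U_{n-k,k},\psi_j}$ to carry a Whittaker functional for the residual torus/unipotent data.

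First I would set this up for a well-chosen $k$. For $n > 3$, pick $k$ so that $\{n-k,k\}$ are two positive integers with at least one of them equal to $3$ or reducing further; e.g. for $n \ge 4$ write $n = 3 + (n-3)$ and use the parabolic $Q_{3,n-3}$ (or iterate). By \eqref{eq:result of Kable for Jacquet module of exceptional}, the \emph{nontwisted} Jacquet module $(\theta_n)_{U_{3,n-3}}$ is, up to the modulus twist and the metaplectic-tensor bookkeeping, $\theta_3 \,\widetilde{\otimes}\, \theta_{n-3}$. Restricting further the character of $N_3 \times N_{n-3}$ to its generic character on each factor, and using that $\theta$ is non-generic in the $n=3$ factor (Flicker--Kazhdan--Savin) — or in the smaller factor after iterating down — kills the Whittaker functional. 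The point is that a Whittaker functional on $\theta_n$ restricted along $N_n \supset U_{3,n-3} \cdot (N_3 \times N_{n-3})$ produces, after passing to the Jacquet module along $U_{3,n-3}$ with its (generic along $U_{3,n-3}$) character, a functional on $\theta_3 \,\widetilde{\otimes}\, \theta_{n-3}$ that is generic in each tensor factor; by Corollary~\ref{corollary:hom of tensor separates} (or directly by the structure of the metaplectic tensor as an induced representation, \eqref{eq:refinement for tensor}) such a functional forces a Whittaker functional, possibly twisted by a square-trivial character, on $\theta_3$. Since twisting by a character does not affect genericity of an irreducible representation of $\widetilde{\GL}_3$ — or one absorbs it via Claim~\ref{claim:taking characters out of exceptional} — this contradicts the $n=3$ result.

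The main obstacle I expect is bookkeeping the characters on $U_{n-k,k}$: one must check that the restriction of the standard Whittaker character $\psi$ of $N_n$ to $U_{n-k,k}$ indeed lands in the "maximally generic" orbit $\psi_j$ with $j = \min(n-k,k)$, and that on the complementary unipotent $N_{n-k} \times N_k$ the induced character is again the standard generic character of each factor, so that the induction is genuinely from generic data on both sides. A secondary technical point is that the metaplectic tensor is only defined up to restriction issues (the $\square$-subgroups and the square-trivial character $\chi_g$ appearing in Claim~\ref{claim:Mackey theory applied to metaplectic tensor}); this is exactly what Corollary~\ref{corollary:hom of tensor separates} is designed to handle, so invoking it lets me conclude that a generic vector in the tensor forces a $\chi$-twisted generic functional on one factor without needing a cleaner tensor decomposition. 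Once the $n=3$ base case is plugged in, an easy induction on $n$ (or a single application with $k$ chosen so the smaller block has size $\le 3$) finishes it; for the case $|2| \ne 1$ this recovers the Kazhdan--Patterson result, and for $|2| = 1$ with $n > 3$ it is the new content, resting on the $|2|=1$, $n=3$ input of \cite{FKS}.
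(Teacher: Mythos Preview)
Your overall strategy --- reduce to the $n=3$ case of \cite{FKS} by passing to $\theta_3\widetilde{\otimes}\theta_{n-3}$ and then separating the tensor via Claim~\ref{claim:Mackey theory applied to metaplectic tensor} --- is exactly the paper's. But the execution has a real gap. The restriction of the Whittaker character $\psi$ of $N_n$ to $U_{3,n-3}$ is $\psi_1$ (only the coordinate $u_{3,4}$ appears), not $\psi_{\min(3,n-3)}$ and not trivial. Hence the factorization you need is $(\theta_n)_{N_n,\psi}=((\theta_n)_{U_{3,n-3},\psi_1})_{\ldots}$, which goes through the \emph{twisted} Jacquet module $(\theta_n)_{U_{3,n-3},\psi_1}$, not the nontwisted one. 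That twisted module is not $\theta_3\widetilde{\otimes}\theta_{n-3}$, and its computation (Theorem~\ref{theorem:twisted Jacquet modules of theta along maximal parabolic}) in fact uses the present theorem via Lemma~\ref{lemma:twisted theta U1 factors through theta U2}, so invoking it would be circular. Your sentence ``after passing to the Jacquet module along $U_{3,n-3}$ with its (generic along $U_{3,n-3}$) character, a functional on $\theta_3\widetilde{\otimes}\theta_{n-3}$'' conflates the twisted and nontwisted Jacquet modules.

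The paper sidesteps this by working on the induced side. The nontwisted Jacquet module \eqref{eq:result of Kable for Jacquet module of exceptional} gives an embedding $\theta\subset\Ind_{\widetilde{Q}_{3,n-3}}^{\widetilde{\GL}_n}(\delta_{Q_{3,n-3}}^{1/4}\theta_3\widetilde{\otimes}\theta_{n-3})$, and one shows the induced representation itself has no Whittaker functional. The Geometric Lemma (\cite{BZ2} Theorem~5.2) filters its $(N_n,\psi)$-Jacquet module by double cosets $Q_{3,n-3}w^{-1}N_n$; every contribution vanishes because $\psi$ is nontrivial on $\rconj{w}U_{3,n-3}\cap N_n$, except for $w_0=\left(\begin{smallmatrix}&I_{n-3}\\I_3\end{smallmatrix}\right)$, whose piece is $\delta\cdot(\theta_3\widetilde{\otimes}\theta_{n-3})_{N_3\times N_{n-3},\psi}$. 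Restricting to $\widetilde{\GL}_3^{\square}\times\widetilde{\GL}_{n-3}^{\square}$ and using $(\theta_3)_{N_3,\psi}=0$ finishes it. This induction-plus-Geometric-Lemma step is the missing mechanism that legitimately produces a Whittaker-type functional on $\theta_3\widetilde{\otimes}\theta_{n-3}$ from one on $\theta_n$; a direct Jacquet factorization does not.
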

\begin{proof}[Proof of Theorem~\ref{theorem:theta non generic n greater than 3}]
According to \eqref{eq:result of Kable for Jacquet module of exceptional} (proved in \cite{Kable} without any assumption on the field),
$\theta\subset\Ind_{\widetilde{Q}_{3,n-3}}^{\widetilde{\GL}_n}(\delta_{Q_{3,n-3}}^{1/4}\theta_{3}\widetilde{\otimes}\theta_{n-3})$. Hence it is enough to
prove
\begin{align*}
(\Ind_{\widetilde{Q}_{3,n-3}}^{\widetilde{\GL}_n}(\delta_{Q_{3,n-3}}^{1/4}\theta_{3}\widetilde{\otimes}\theta_{n-3}))_{N_n,\psi}=0.
\end{align*}
By virtue of the Geometric Lemma of Bernstein and Zelevinsky (\cite{BZ2} Theorem~5.2), this representation is glued from certain
Jacquet modules of $\theta_{3}\widetilde{\otimes}\theta_{n-3}$. We show that these Jacquet modules vanish, this will complete the proof.
Let $\mathcal{W}$ be a subset of Weyl elements such that
$\GL_n=\coprod_{w\in\mathcal{W}}Q_{3,n-3}w^{-1}N_n$. When
$\psi|_{\rconj{w}U_{3,n-3}\cap N_n}\ne1$, the quotient corresponding to $w$ vanishes. This implies there is only one quotient, corresponding to $w_0=\left(\begin{smallmatrix}&I_{n-3}\\I_{3}\end{smallmatrix}\right)$, namely
\begin{align}\label{eq:space to clear}
\delta\cdot(\theta_{3}\widetilde{\otimes}\theta_{n-3})_{N_{3}\times N_{n-3},\psi}.
\end{align}
Here $\delta$ is some modulus character and $N_{3}\times N_{n-3}<M_{3,n-3}$. According to Claim~\ref{claim:Mackey theory applied to metaplectic tensor}, when we restrict $\theta_{3}\widetilde{\otimes}\theta_{n-3}$ to $\widetilde{\GL}_{3}^{\square}\times\widetilde{\GL}_{n-3}^{\square}$, we obtain a finite direct sum of representations
$\theta_{3}^{\square}\otimes\theta_{n-3}^{\square}$. Since the Jacquet functor with respect to $N_{3}$ and $\psi$ commutes with this tensor and the restriction, and $(\theta_{3})_{N_{3},\psi}=0$ by \cite{FKS} (Lemma~6), \eqref{eq:space to clear} is zero.
\end{proof}

\begin{lemma}\label{lemma:twisted theta U1 factors through theta U2}
Assume $n\geq3$, $\theta=\theta_n$ and let $\psi$ be the character of $U_{1,n-1}$ given by $\psi(u)=\psi(u_{1,2})$. Then
$\theta_{U_{1,n-1},\psi}$ is a quotient of $\theta_{U_{2,n-2}}$. Similarly, $\theta_{U_{n-1,1},\psi}$ is a quotient of $\theta_{U_{n-2,2}}$, where
$\psi(u)=\psi(u_{n-1,n})$.
\end{lemma}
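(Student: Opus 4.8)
The plan is to squeeze $\theta_{U_{1,n-1},\psi}$ and $\theta_{U_{2,n-2}}$ between the Jacquet module of $\theta$ along a common, larger unipotent subgroup. Let $V$ be the unipotent radical of the standard parabolic $Q_{1,1,n-2}$, i.e. the group of matrices whose only nontrivial off-diagonal entries lie in positions $(1,2)$, $(1,k)$ and $(2,k)$ for $3\le k\le n$. Then $U_{1,n-1}$ and $U_{2,n-2}$ are both normal in $V$, with intersection $U'$ the subgroup of entries $(1,k)$, $3\le k\le n$; since $\psi$ is trivial on $U'$ and $[V,V]\subseteq U'$, it extends to a character of $V$, still denoted $\psi$, by $v\mapsto\psi(v_{1,2})$. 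Write $U''$ for the subgroup of entries $(2,k)$, $3\le k\le n$, and $X$ for the $(1,2)$ root subgroup, so that $V=U_{1,n-1}\rtimes U''=U_{2,n-2}\rtimes X$ with $\psi|_{U''}=1$ and $\psi|_{U_{2,n-2}}=1$. Computing $\theta_{V,\psi}$ by Jacquet in stages along $U_{2,n-2}\triangleleft V$ and then along $V/U_{2,n-2}\cong X$ exhibits it as a quotient of $\theta_{U_{2,n-2}}$; doing the same along $U_{1,n-1}\triangleleft V$ and then along $V/U_{1,n-1}\cong U''$ exhibits it as a quotient of $\theta_{U_{1,n-1},\psi}$. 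So it suffices to prove that this second surjection $\theta_{U_{1,n-1},\psi}\to\theta_{V,\psi}$ is an isomorphism, for then $\theta_{U_{2,n-2}}$ surjects onto $\theta_{V,\psi}\cong\theta_{U_{1,n-1},\psi}$.

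In terms of Jacquet kernels the desired isomorphism reads $\theta(V,\psi)=\theta(U_{1,n-1},\psi)$, the inclusion $\supseteq$ being automatic. Using Lemma~\ref{lemma:Jacquet kernel as integral} together with the elementary facts that $U''$ commutes with $U'$, normalizes $U_{1,n-1}$, and fixes $\psi$, I would reduce the reverse inclusion to the single statement $\theta(U'')\subseteq\theta(U_{1,n-1},\psi)$. Indeed, given $v\in\theta(V,\psi)$, a large $\psi^{-1}$-average of $v$ over $V$ factors as the $(U_{1,n-1},\psi)$-average of the vector $w$ obtained by averaging $v$ over a large compact of $U''$; thus $w\in\theta(U_{1,n-1},\psi)$, while $v-(\mathrm{vol})^{-1}w\in\theta(U'')$, so once $\theta(U'')\subseteq\theta(U_{1,n-1},\psi)$ we get $v\in\theta(U_{1,n-1},\psi)$. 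Equivalently, the claim is that the $(2,k)$ directions act trivially on $\theta_{U_{1,n-1},\psi}$, i.e. they contribute nothing to the twisted Jacquet module once the character $\psi$ has been imposed on the $(1,2)$ slot.

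This last assertion is the crux, and is the step that genuinely uses that $\theta$ is exceptional --- for an arbitrary smooth representation it is false. I expect to prove it by the mechanism underlying the other Jacquet-module computations in the paper: apply \eqref{eq:result of Kable for Jacquet module of exceptional} to peel $\theta$ along $Q_{2,n-2}$ down to $\theta_2\,\widetilde{\otimes}\,\theta_{n-2}$ (up to a modulus character), separate the two tensor factors by the Mackey description in Claim~\ref{claim:Mackey theory applied to metaplectic tensor}, and run an induction on $n$ whose base case is the twisted Jacquet module of the exceptional representation of $\widetilde{\GL}_2$ along $N_2$, computed by Gelbart and Piatetski-Shapiro \cite{GP}; alternatively, the residual $U''$-integration can be absorbed by the ``exchange of roots'' of Ginzburg, Rallis and Soudry \cite{GRS5}, in the same spirit as the vanishing of all but one cell in the proof of Theorem~\ref{theorem:theta non generic n greater than 3}. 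The second assertion of the lemma, concerning $U_{n-1,1}$ and the character $\psi(u)=\psi(u_{n-1,n})$, is entirely analogous: apply the transpose-inverse automorphism, which interchanges $Q_{1,n-1}$ with $Q_{n-1,1}$ and carries $\theta$ to another exceptional representation.
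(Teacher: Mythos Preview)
Your setup and reduction are correct and match the paper's. Writing $V=U_{1,1,n-2}$, you rightly observe that $\theta_{V,\psi}$ (your $\psi$, trivial on the $(2,k)$ directions) is simultaneously a quotient of $\theta_{U_{2,n-2}}$ and of $\theta_{U_{1,n-1},\psi}$, so the lemma follows once the second surjection is an isomorphism; and your further reduction to $\theta(U'')\subseteq\theta(U_{1,n-1},\psi)$, i.e.\ that $U''$ acts trivially on $\theta_{U_{1,n-1},\psi}$, is exactly what the paper proves.

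The gap is that you do not actually prove this key step. Neither of your suggestions assembles into an argument. ``Peeling along $Q_{2,n-2}$'' via \eqref{eq:result of Kable for Jacquet module of exceptional} computes $\theta_{U_{2,n-2}}$, not $\theta_{U_{1,n-1},\psi}$; you cannot invoke Kable's formula here until you already know the $(2,k)$ directions act trivially, which is the claim itself. No inductive hypothesis on $n$ is formulated, and the Gelbart--Piatetski-Shapiro base case does not plug in anywhere. The exchange-of-roots remark is likewise only a gesture.

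What the paper does, and what you are missing, is a \emph{ladder} (maximality) argument. To show $U''$ acts trivially it suffices that $(\theta_{U_{1,n-1},\psi})_{U'',\chi}=0$ for nontrivial $\chi$; by transitivity of the $\GL_{n-2}$-action one takes $\chi(v)=\psi(v_{2,3})$, giving $\theta_{V_1V_2,\psi^{(2)}}$ with $V_i=U_{i,n-i}\cap M_{i-1,n-i+1}$ and $\psi^{(j)}(v)=\psi(\sum_{i\le j}v_{i,i+1})$. Suppose this is nonzero and pick $j\geq 2$ maximal with $\theta_{\prod_{i\le j}V_i,\psi^{(j)}}\neq 0$; non-genericity of $\theta$ (Theorem~\ref{theorem:theta non generic n greater than 3}) forces $j\leq n-2$. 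By maximality and transitivity $V_{j+1}$ acts trivially, so this module equals $(\theta_{U_{j+1,n-j-1}})_{N_{j+1},\psi}$. \emph{Now} Kable's formula applies, and with Claim~\ref{claim:Mackey theory applied to metaplectic tensor} reduces the question to $(\theta_{j+1})_{N_{j+1},\psi}$, which vanishes since $j+1\geq 3$ --- contradiction. The point is that Kable is invoked only \emph{after} the ladder has reached a level where the next row of unipotents is already known to act trivially; your proposal tries to invoke it at the start, where it is unavailable.
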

\begin{proof}[Proof of Lemma~\ref{lemma:twisted theta U1 factors through theta U2}]
The proof closely resembles Proposition~4 of Bump, Friedberg and Ginzburg \cite{BFG2}. We prove only the first assertion, the second is symmetric.
For $1\leq j\leq n-1$, let $V_j=U_{j,n-j}\cap M_{j-1,n-j+1}$ (e.g., $V_1=U_{1,n-1}$ and $\prod_{j=1}^{n-1}V_j=N_n$). Let $\psi$ be the character of
$\prod_{i=1}^jV_i$ defined by $\psi(v)=\psi(\sum_{i=1}^jv_{i,i+1})$. Then $(\theta_{V_1,\psi})_{V_2,\psi}=\theta_{V_1V_2,\psi}$ and $\GL_{n-j-1}<M_{j+1,n-j-1}$ stabilizes $\psi$ and acts
transitively on the nontrivial characters of $V_{j+1}$. Hence it suffices to show
\begin{align*}
\theta_{V_1V_2,\psi}=0,
\end{align*}
as this would imply that the action of $U_{2,n-2}$ on $\theta_{U_{1,n-1},\psi}$ is trivial.
Suppose otherwise and let $j\geq2$ be maximal such that
\begin{align}\label{eq:suppose contradiction lemma factors through U2}
\theta_{\prod_{i=1}^jV_i,\psi}\ne0.
\end{align}
Since for $n\geq3$, $\theta$ does not have a Whittaker model, we must have $j\leq n-2$.
Then
\begin{align*}
\theta_{\prod_{i=1}^jV_i,\psi}=(\theta_{\prod_{i=1}^jV_i,\psi})_{V_{j+1}}=
(\theta_{U_{j+1,n-j-1}})_{N_{j+1},\psi}=\delta_{Q_{j+1,n-j-1}}^{1/4}(\theta_{j+1}\widetilde{\otimes}\theta_{n-j-1})_{N_{j+1},\psi},
\end{align*}
where the last equality follows from \eqref{eq:result of Kable for Jacquet module of exceptional}.
As in the proof of Theorem~\ref{theorem:theta non generic n greater than 3} above, we use
Claim~\ref{claim:Mackey theory applied to metaplectic tensor} and restrict $\theta_{j+1}\widetilde{\otimes}\theta_{n-j-1}$ to $\widetilde{\GL}_{j+1}^{\square}\times\widetilde{\GL}_{n-j-1}^{\square}$, and since $(\theta_{j+1})_{N_{j+1},\psi}=0$, we obtain $(\theta_{j+1}\widetilde{\otimes}\theta_{n-j-1})_{N_{j+1},\psi}=0$ contradicting \eqref{eq:suppose contradiction lemma factors through U2}.
\end{proof}

\section{Twisted Jacquet modules of exceptional representations}\label{section:Heisenberg jacquet modules}
In this section we study twisted Jacquet modules of $\theta=\theta_{n,\chi,\gamma}$. Let $U=U_{n-k,k}$, for
$0<k<n$. The Levi subgroup $M=M_{n-k,k}$ acts on the set of characters of $U$, with $\min(n-k,k)+1$ orbits. We choose representatives
\begin{align*}
\psi_j(u)=\psi(\sum_{i=1}^ju_{n-k-i+1,n-k+j-i+1}),\quad 0\leq j\leq \min(n-k,k).
\end{align*}
For example when $n=5$ and $k=3$, $\psi_1(u)=\psi(u_{2,3})$ and $\psi_2(u)=\psi(u_{2,4}+u_{1,3})$. When $n=2k$, $\psi_k\left(\begin{smallmatrix}I_k&v\\&I_k\end{smallmatrix}\right)=\psi(\mathrm{tr}(v))$.
The stabilizer of $\psi_j$ in $M$ is
\begin{align*}
\mathrm{St}_{n,k}(\psi_j)=\left\{\left(\begin{array}{cccc}b&v\\&c\\&&c&y\\&&&d\end{array}\right):b\in\GL_{n-k-j},c\in\GL_j,d\in\GL_{k-j}\right\}.
\end{align*}

Let $\mathbb{GL}_{j}$ denote the embedding of $\GL_j$ in $\mathrm{St}_{n,k}(\psi_j)$ via the coordinates of $c$.
The restriction of the cover to $\mathbb{GL}_{j}$ 
gives a trivial
double cover. In fact by \eqref{eq:block-compatibility} if $c,c'\in\GL_j$,
\begin{align*}
\sigma(\mathrm{diag}(I_{n-k-j},c,c,I_{k-j}),\mathrm{diag}(I_{n-k-j},c',c',I_{k-j}))=(\det{c},\det{c'})_2.
\end{align*}
Hence any genuine representation of $\widetilde{\mathbb{GL}}_j$ takes the form $\pi\otimes\gamma_{\psi}$, where 
\begin{align*}
\pi\otimes\gamma_{\psi}(\epsilon\mathfrak{s}(\mathrm{diag}(I_{n-k-j},c,c,I_{k-j})))=\epsilon\gamma_{\psi}(\det c)\pi(c),\qquad \epsilon\in\mu_2.
\end{align*}

The following theorem was proved by Gelbart and Piatetski-Shapiro \cite{GP} (Theorem~2.2) for the case $k=1$. We extend it to any $k$.
\begin{theorem}\label{theorem:twisted Jacquet modules of theta along generic character in k=n-k case}
Assume $n=2k$. Then $\theta_{U,\psi_k}$ is one-dimensional and in particular, irreducible. The action of $\widetilde{\mathbb{GL}}_k$ on this space
is given by the character
\begin{align*}
\epsilon\mathfrak{s}(\mathrm{diag}(c,c))\mapsto\epsilon\chi^2(c)\gamma_{\psi,(-1)^{k-1}}(\det c),\qquad c\in\GL_k.
\end{align*}
\end{theorem}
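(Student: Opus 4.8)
The plan is to reduce the statement about $\theta_{U,\psi_k}$ for $n=2k$ to the case $n=2$, which is Theorem~2.2 of Gelbart and Piatetski-Shapiro, by an induction on $k$. The natural tool is the "exchange of roots" technique of Ginzburg, Rallis and Soudry: one wants to replace the abelian unipotent $U=U_{k,k}$, on which $\psi_k$ restricts to $\psi(\operatorname{tr}(v))$ with the full diagonal block pattern, by a different unipotent subgroup obtained by exchanging certain root subgroups, in a way that makes the smaller exceptional representations $\theta_{k-1}$ (or $\theta_j$ with smaller $j$) appear through \eqref{eq:result of Kable for Jacquet module of exceptional}.

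First I would set up the inductive step carefully. Using \eqref{eq:result of Kable for Jacquet module of exceptional}, the Jacquet module of $\theta=\theta_{2k}$ along $U_{1,2k-1}$ (and its opposite) relates to $\theta_1\widetilde{\otimes}\theta_{2k-1}$, and more to the point, iterating this lets me peel off the outer coordinates. The Shalika character $\psi_k$ on $U_{k,k}$ can be built up one root at a time: one first takes the Jacquet module along the one-parameter subgroup in position $(k,k+1)$ with character $\psi$, then uses Lemma~\ref{lemma:twisted theta U1 factors through theta U2} type reasoning and exchange of roots to move to the next root, at each stage the relevant stabilizer acting transitively on nontrivial characters of the next root subgroup. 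The effect should be that $\theta_{2k,U_{k,k},\psi_k}$ is expressed in terms of $\theta_{2k-2,U_{k-1,k-1},\psi_{k-1}}$ tensored with a one-dimensional twist coming from the outermost $\GL_1$-blocks, and by the induction hypothesis the former is one-dimensional. The Weil-factor twist $\gamma_{\psi,(-1)^{k-1}}$ and the $\chi^2$ arise from tracking the cocycle \eqref{eq:block-compatibility}, the modulus characters $\delta_{Q}^{1/4}$, and the $\chi\gamma$-twist in the metaplectic tensor of \eqref{eq:result of Kable for Jacquet module of exceptional}, together with the base case $k=1$ where $\gamma_{\psi,(-1)^0}=\gamma_{\psi,1}=\gamma_\psi$ matches the Gelbart--Piatetski-Shapiro normalization.

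The hard part will be the exchange-of-roots bookkeeping: verifying that at each stage of building up $\psi_k$ the relevant "new" root subgroup can be exchanged against an old one without changing the Jacquet module (i.e. that the two unipotent subgroups one is swapping, together with the character, satisfy the hypotheses of the GRS exchange lemma — the commutator relations and the triviality/nontriviality of $\psi$ on the relevant commutators), and that the leftover abelian quotient acts trivially so one really lands on $\theta_{2k-2,U_{k-1,k-1},\psi_{k-1}}$ and not on something larger. A secondary but genuine obstacle is keeping precise track of the genuine central character data through the metaplectic tensor: one must check that the $\widetilde{\mathbb{GL}}_k$-action on the (one-dimensional) Jacquet module is exactly $\epsilon\mathfrak{s}(\operatorname{diag}(c,c))\mapsto\epsilon\chi^2(c)\gamma_{\psi,(-1)^{k-1}}(\det c)$, which means computing how $\operatorname{diag}(c,c)$ acts, using block-compatibility to see that $\mathbb{GL}_k$ splits with the Weil-factor section and that the $\chi^n=\chi^{2k}$ on the center of $\GL_{2k}$ restricts correctly to $\chi^2$ on the diagonal $\GL_k$. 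I would handle the uniqueness/one-dimensionality first (this is where the real representation-theoretic content lies — it propagates from $k=1$ by the transitivity of the stabilizer actions and the exchange-of-roots reduction), and only afterward pin down the explicit character by a direct computation with \eqref{eq:block-compatibility}, \eqref{eq:Weil factor identities}, and the normalization of $\theta_{n,\chi,\gamma}$.
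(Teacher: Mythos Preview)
Your proposal is correct and follows essentially the same approach as the paper: induction on $k$ with base case $k=1$ from Gelbart--Piatetski-Shapiro, a single exchange-of-roots step (GRS Lemma~2.2) to pass from $U_{k,k}$ to a unipotent containing $U_{1,n-1}$, then Lemma~\ref{lemma:twisted theta U1 factors through theta U2} to factor through $U_{2,n-2}$, Kable's formula \eqref{eq:result of Kable for Jacquet module of exceptional} to produce $\theta_2\widetilde{\otimes}\theta_{n-2}$, and finally the induction hypothesis applied to both tensor factors after restriction to $\widetilde{\GL}_2^{\square}\times\widetilde{\GL}_{n-2}^{\square}$. The only sharpening the paper makes over your outline is that the exchange is done in a single step---swapping the full column $V_3=\{v(0,0,v_3,0)\}$ for the row $E=U_{1,k-1}$---rather than one root at a time, and then a specific Weyl conjugation by $w=\left(\begin{smallmatrix}1&&\\&&(-1)^{k-1}\\&I_{k-1}&\\&&&I_{k-1}\end{smallmatrix}\right)$ moves the resulting unipotent to $U_{1,n-1}\rtimes U_{k-1,k-1}$; the sign $(-1)^{k-1}$ in this Weyl element is precisely what produces the $\gamma_{\psi,(-1)^{k-1}}$ in the final character.
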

\begin{remark}\label{remark:about metaplectic Shalika model}
It follows that the space of functionals $\lambda$ on $\theta$ such that $\lambda(\theta(u)\varphi)=\psi_k(u)\lambda(\varphi)$, where $\varphi$
belongs to the space of $\theta$ and $u\in U$, is one-dimensional. In the non-metaplectic setting, if $\pi\in\Alg{\GL_n}$ and
$\pi_{U,\psi_k}$ is the trivial character, $\pi$ admits a Shalika functional (see Section~\ref{subsection:representations}).
Since $\chi^2$ corresponds to
$\chi^2\otimes\gamma_{\psi}$ (as explained above, because the cover is trivial), it is natural to call $\lambda$ a metaplectic twisted Shalika functional, or simply a metaplectic Shalika
functional when $\chi=1$. Of course it implies an
embedding $\theta\subset\Ind_{\mathbb{GL}_kU}^{\GL_{2k}}(\chi^2\gamma_{\psi,(-1)^{k-1}}\otimes\psi_k)$, which is a metaplectic twisted Shalika model.
We will use this observation in Section~\ref{section:Distinguished representations}. One can expect uniqueness of this model, as in the non-metaplectic
setting.
\end{remark}
\begin{proof}[Proof of Theorem~\ref{theorem:twisted Jacquet modules of theta along generic character in k=n-k case}]
We use induction on $k$. For $k=1$ the result is known: one-dimensionality holds because then $\theta$ is generic, and the action
of $\widetilde{\mathbb{GL}}_1$ is
$\epsilon\mathfrak{s}(\mathrm{diag}(c,c))\mapsto\epsilon\chi^2(c)\gamma_{\psi}(c)$ by \cite{GP} (Theorem~2.2, see also \cite{Kable} Lemma~5.3).

Denote the elements of $U$ by
\begin{align*}
v(v_1,v_2,v_3,v_4)=\left(\begin{array}{cccc}1&&v_1&v_2\\&I_{k-1}&v_3&v_4\\&&1\\&&&I_{k-1}\end{array}\right).
\end{align*}
By definition $\psi_k(v(v_1,v_2,v_3,v_4))=\psi(v_1)\psi_{k-1}(v_4)$. Let
\begin{align*}
&V_{3}=\{v(0,0,v_3,0)\}<V,\quad V_{1,2,4}=\{v(v_1,v_2,0,v_4)\}<V, 
\\&E=U_{1,k-1}=\left\{\left(\begin{array}{ccc}1&e\\&I_{k-1}\\&&I_{k}\end{array}\right)\right\}.
\end{align*}
According to the local analog of ``exchanging roots", proved by Ginzburg, Rallis and Soudry \cite{GRS5} (Lemma~2.2, stated for unipotent
subgroups of symplectic groups, but the arguments are general and hold in our setting), in $\Alg{V_{1,2,4}}$,
\begin{align}\label{iso:isomorphism exchange roots generic character n=2k}
\theta_{U,\psi_k}\isomorphic\theta_{V_{1,2,4}\rtimes E,\psi_k}.
\end{align}
Indeed, it is simple to check that the list of properties stated in the lemma holds in this setting (in the notation
of \cite{GRS5}, $C=V_{1,2,4}$, $X=V_3$ and $Y=E$).

Let
\begin{align*}
w=\left(\begin{array}{cccc}1\\&&\varepsilon\\&I_{k-1}\\&&&I_{k-1}\end{array}\right),\qquad\varepsilon=(-1)^{k-1}.
\end{align*}
Then $\rconj{\mathfrak{s}(w)}(\theta_{V_{2,3,4}\rtimes E,\psi_k})=\theta_{U',\psi'}$, where
\begin{align*}
U'=\left\{\left(\begin{array}{cccc}1&v_1&e&v_2\\&1&0&0\\&&I_{k-1}&v_4\\&&&I_{k-1}\end{array}\right)\right\},\qquad \psi'(u')=\psi(\varepsilon v_1)\psi_{k-1}(v_4).
\end{align*}
We chose $w\in\mathrm{SL}_{n}$, in order
to easily appeal to the formulas of Banks, Levi and Sepanski \cite{BLS} for computing conjugations of torus elements by $\mathfrak{s}(w)$ (see below). The actual choice of section $\mathfrak{s}$ does not matter for the conjugation, henceforth we simply write $w$.
Since $U'=U_{1,n-1}\rtimes U_{k-1,k-1}$, where $U_{k-1,k-1}<M_{2,2(k-1)}$, and $\psi'|_{U_{1,n-1}}=\psi(\varepsilon\cdot)$,
Lemma~\ref{lemma:twisted theta U1 factors through theta U2} implies
$\theta_{U',\psi'}=(\theta_{N_2U_{2,n-2},\psi^{\circ}})_{U_{k-1,k-1},\psi_{k-1}}$, where $\psi^{\circ}(u)=\psi(\varepsilon u_{1,2})$ and in particular,
$\psi^{\circ}|_{U_{2,n-2}}=1$. This Jacquet module
factors through the Jacquet module with respect to $U_{2,n-2}$ and the trivial character, hence by \eqref{eq:result of Kable for Jacquet module of exceptional}
it is equal to
\begin{align*}
\delta_{Q_{2,n-2}}^{1/4}(\theta_{2,\chi,\gamma_1}\widetilde{\otimes}_{\chi\gamma}\theta_{n-2,\chi,\gamma_2})_{N_2\times U_{k-1,k-1},\psi(\varepsilon\cdot)\otimes\psi_{k-1}}.
\end{align*}
Here $\gamma_1$ and $\gamma_2$ are arbitrary.
We restrict to
$\widetilde{\GL}_2^{\square}\times\widetilde{\GL}_{n-2}^{\square}$. Since $(\theta_2\widetilde{\otimes}\theta_{n-2})^{\square}=\theta_2^{\square}\otimes\theta_{n-2}^{\square}$ (Claim~\ref{claim:Mackey theory applied to metaplectic tensor}),
we obtain
\begin{align}\label{eq:applying factoring through}
\delta_{Q_{2,n-2}}^{1/4}((\theta_{2,\chi,\gamma_1})_{N_2,\psi(\varepsilon\cdot)})^{\square}\otimes((\theta_{n-2,\chi,\gamma_2})_{U_{k-1,k-1},\psi_{k-1}})^{\square}.
\end{align}
By the induction hypothesis both spaces are one-dimensional, hence $\theta_{U,\psi_k}$ is one-dimensional.

Regarding the action of $\widetilde{\mathbb{GL}}_k$, it is of the form
$\eta\otimes\gamma_{\psi}$ for some character $\eta$ of $F^*$, hence it is determined by its restriction to a maximal torus.

We describe the isomorphism \eqref{iso:isomorphism exchange roots generic character n=2k},
in order to understand how this action is transferred from $\theta_{U,\psi_k}$ to $\theta_{V_{1,2,4}\rtimes E,\psi_k}$.
The isomorphism was given in \cite{GRS5} (proof of Lemma~2.2) using the following chain of isomorphisms:
\begin{align*}
\theta_{V_{1,2,4}\rtimes E,\psi_k}\rightarrow
(\theta_{V_{1,2,4},\psi_k})_E\rightarrow
(\ind_{U}^{U\rtimes E}(\theta_{U,\psi_k}))_E\rightarrow
\theta_{U,\psi_k}.
\end{align*}
The first step was to define a mapping
$\theta_{V_{1,2,4},\psi_k}\rightarrow\ind_{U}^{U\rtimes E}(\theta_{U,\psi_k})$.
For $\varphi$ in the space of $\theta$,
$f(\varphi+\theta(V_{1,2,4},\psi_k))$ in the space of $\ind_{U}^{U\rtimes E}(\theta_{U,\psi_k})$ was given by
\begin{align*}
f(\varphi)(x)=\theta(x)\varphi + \theta(U,\psi_k),\qquad x\in U\rtimes E.
\end{align*}
This mapping was extended to a mapping $\theta_{V_{1,2,4}\rtimes E,\psi_k}\rightarrow(\ind_{U}^{U\rtimes E}(\theta_{U,\psi_k}))_E$ by
\begin{align*}
f(\varphi+\theta(V_{1,2,4}E,\psi_k))(x)=f(\varphi)(x)+(\ind_{U}^{U\rtimes E}(\theta_{U,\psi_k}))(E).
\end{align*}
To obtain an element in $\theta_{U,\psi_k}$, integrate $f(\varphi+\theta(V_{1,2,4}E,\psi_k))(x)$ over $E$. Altogether
\begin{align*}
\varphi+\theta(V_{1,2,4}E,\psi_k)\mapsto\int_{E}\theta(e)\varphi\ de+\theta(U,\psi_k).
\end{align*}
Let $\mathbb{T}_k$ denote the image of $T_k$ in $\mathbb{GL}_k$. Since $\mathbb{T}_k$ 
normalizes $V_{1,2,4}$, $E$ and $U$, and stabilizes $\psi_k$,
the isomorphism \eqref{iso:isomorphism exchange roots generic character n=2k} extends to $\Alg{\widetilde{\mathbb{T}}_k}$.

Let $t=\mathrm{diag}(t_1,t_2,t_1,t_2)\in T_n$ with $t_1\in F^*$ and $t_2\in T_{k-1}$ ($t$ is a general element of
$\mathbb{T}_k$). 
Since
\begin{align}\label{eq:explicating action of torus}
\theta(\mathfrak{s}(t))\varphi+\theta(V_{1,2,4}E,\psi_k)\mapsto\delta_{Q_{2,n-2}}^{1/4}(\rconj{w}t)\theta(\mathfrak{s}(t))\int_{E}\theta(e)\varphi\ de+\theta(U,\psi_k),
\end{align}
the action of $\widetilde{\mathbb{T}}_k$ on $\theta_{U,\psi_k}$ is transformed by \eqref{iso:isomorphism exchange roots generic character n=2k} to
$\rconj{w^{-1}}\delta_{Q_{2,n-2}}^{-1/4}$ multiplied by the action of $\widetilde{\mathbb{T}}_k$ on $\theta_{V_{2,3,4}\rtimes E,\psi_k}$. The latter is given by
$\theta_{V_{2,3,4}\rtimes E,\psi_k}(\mathfrak{s}(t))=\theta_{U',\psi'}(\rconj{w}\mathfrak{s}(t))$. Using \cite{BLS} (Section~2 Lemma~2, Section~3
Lemmas~3 and 1) we see that
\begin{align*}
\rconj{w}\mathfrak{s}(t)&=\sigma(w,t)\mathfrak{s}(\rconj{w}t)=(t_1,\det{t_2})_2(\det{t_2},\det{t_2})_2\mathfrak{s}(\mathrm{diag}(t_1,t_1,t_2,t_2))\\&=
(t_1,\det{t_2})_2(\det{t_2},\det{t_2})_2\mathfrak{s}(\mathrm{diag}(t_1,t_1,I_{2n-2}))
\mathfrak{s}(\mathrm{diag}(I_2,t_2,t_2))\in\widetilde{\GL}_2^{\square}\times\widetilde{\GL}_{n-2}^{\square}.
\end{align*}
According to the induction hypothesis
\begin{align*}
&(\theta_{2,\chi,\gamma_1})_{N_2,\psi(\varepsilon\cdot)}(\mathfrak{s}(\mathrm{diag}(t_1,t_1)))=\chi(t_1^2)\gamma_{\psi,\varepsilon}(t_1)=\chi(t_1^2)(t_1,t_1)_2^{k-1}\gamma_{\psi}(t_1),\\
&(\theta_{n-2,\chi,\gamma_2})_{U_{k-1,k-1},\psi_{k-1}}(\mathfrak{s}(\mathrm{diag}(t_2,t_2)))=\chi(\det{t_2}^2)\gamma_{\psi,-\varepsilon}(\det t_2)\\&\qquad=
\chi(\det{t_2}^2)(\det{t_2},\det{t_2})_2^{k-2}\gamma_{\psi}(\det t_2),
\end{align*}
therefore
\begin{align*}
&\theta_{U,\psi_k}(\mathfrak{s}(t))=\theta_{U',\psi'}(\rconj{w}\mathfrak{s}(t))=(\chi^2\gamma_{\psi,(-1)^{k-1}})(\det \mathrm{diag}(t_1,t_2)).
\end{align*}
Here we used \eqref{eq:Weil factor identities} and the fact that $(-1,a)_2=(a,a)_2$.
Note that the modulus character appearing in \eqref{eq:applying factoring through} was cancelled by the twist of the action due to
\eqref{eq:explicating action of torus}. 
\end{proof}
Using this result we can compute all the twisted Jacquet modules for maximal parabolic subgroups.
\begin{theorem}\label{theorem:twisted Jacquet modules of theta along maximal parabolic}
Let $0\leq j\leq\min(n-k,k)$. We have
\begin{align*}
\theta_{U,\psi_j}=\delta_{Q_{n-k-j,2j,k-j}}^{1/4}(\theta_{n-k-j,\chi,\gamma_1}\widetilde{\otimes}_{\chi\gamma_{(j)}}\theta_{k-j,\chi,\gamma_2})\otimes(\theta_{2j,\chi,\gamma_3})_{U_{j,j},\psi_j}.
\end{align*}
Here the metaplectic tensor is a representation of $p^{-1}(\GL_{n-k-j}\times\GL_{k-j})$, where $\GL_{n-k-j}\times\GL_{k-j}$ is embedded in $\GL_n$ through $\mathrm{St}_{n,k}(\psi_j)$; the pseudo-characters
$\gamma_i$, $1\leq i\leq 3$, are arbitrary; the pseudo-character $\gamma_{(j)}$ is arbitrary when $n$ is even, and uniquely determined by
\begin{align}\label{eq:condition to determine gamma j k}
\gamma_{(j)}(z)=\gamma(z)/\gamma_{\psi,(-1)^{j-1}}(z^j),\qquad\forall z\in F^*,
\end{align}
when $n$ is odd; the \rhs\ is regarded as a representation in $\Alg{\widetilde{\mathrm{St}}_{n,k}(\psi_j)}$ by extending it trivially on $U_{n-k-j,2j,k-j}$.
\end{theorem}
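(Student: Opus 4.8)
The plan is to deduce the statement from the two cases already in hand: $j=0$, which is \eqref{eq:result of Kable for Jacquet module of exceptional}, and $n=2k=2j$, which is Theorem~\ref{theorem:twisted Jacquet modules of theta along generic character in k=n-k case}. The idea is to isolate, inside the pair $(U_{n-k,k},\psi_j)$, the ``middle'' block $\GL_{2j}$ of $M_{n-k-j,2j,k-j}$ — on whose $(j,j)$-unipotent radical $\psi_j$ is the Shalika character — from the complementary $\GL_{n-k-j}\times\GL_{k-j}$, on which only untwisted Jacquet functors intervene and \eqref{eq:result of Kable for Jacquet module of exceptional} (iterated, using associativity of $\widetilde{\otimes}$ and \eqref{eq:Jacquet nontwisted factors throuh metaplectic}) applies. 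The starting observation is that $U=U_{n-k,k}$ is abelian and decomposes as an internal direct product $U=W\times U_{j,j}$, where $U_{j,j}$ is the unipotent radical of the standard $(j,j)$-parabolic of the middle $\GL_{2j}$ — its root subgroups being the entries $(n-k-j+m,\,n-k+m)$ for $1\le m\le j$ together with the remaining entries of the $j\times j$ upper right block — and $W$ is the complementary coordinate subgroup. From the definition of $\psi_j$ one checks at once that $\psi_j|_{W}=1$ and that $\psi_j|_{U_{j,j}}$ is the Shalika character $v\mapsto\psi(\mathrm{tr}(v))$ of $\GL_{2j}$, so $\theta_{U,\psi_j}=(\theta_W)_{U_{j,j},\psi_j}$; it therefore suffices to identify $\theta_W$ up to the action of $U_{j,j}$ and then carry out the Shalika Jacquet step.

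For this I would proceed as in the proof of Theorem~\ref{theorem:twisted Jacquet modules of theta along generic character in k=n-k case}, now with the two outer blocks $\GL_{n-k-j}$ and $\GL_{k-j}$ carried along. Assuming $j\ge1$, apply the local ``exchange of roots'' of Ginzburg--Rallis--Soudry (\cite{GRS5} Lemma~2.2) to a suitable triple of subgroups (the verification of its hypotheses — abelianness, the commutator and nondegeneracy conditions — is exactly as there) in order to replace $(\theta_W)_{U_{j,j},\psi_j}$ by an isomorphic Jacquet module in which certain root subgroups have been moved out of the unipotent radical, then conjugate by a permutation matrix $w\in\SL_n$ — chosen inside $\SL_n$ so that torus conjugations are computed by the Banks--Levi--Sepanski formulas \cite{BLS}. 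This brings the module into a shape to which Lemma~\ref{lemma:twisted theta U1 factors through theta U2} applies, so that it factors through an untwisted Jacquet module along a standard parabolic; \eqref{eq:result of Kable for Jacquet module of exceptional} then expresses the outcome through a metaplectic tensor, and restricting to the relevant $p^{-1}(M^{\square})$ via Claim~\ref{claim:Mackey theory applied to metaplectic tensor} separates the factors, the middle one being precisely $(\theta_{2j,\chi,\gamma_3})_{U_{j,j},\psi_j}$ — computed by Theorem~\ref{theorem:twisted Jacquet modules of theta along generic character in k=n-k case} — and the outer one being $\theta_{n-k-j,\chi,\gamma_1}\widetilde{\otimes}\,\theta_{k-j,\chi,\gamma_2}$. (Alternatively, as in the proof of Theorem~\ref{theorem:theta non generic n greater than 3}, one may embed $\theta_n\hookrightarrow\Ind_{\widetilde{Q}_{n-2j,2j}}^{\widetilde{\GL}_n}(\delta_{Q_{n-2j,2j}}^{1/4}\,\theta_{n-2j}\widetilde{\otimes}\theta_{2j})$ and compute $(\cdot)_{U_{n-k,k},\psi_j}$ by the Geometric Lemma, showing that a single double coset contributes; passing from a subquotient to equality uses the Shalika functional on $\theta_{2j}$, exactly as in the base case.)

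It then remains to assemble the pieces. The inducing character $\chi$ propagates through every step (Claims~\ref{claim:taking characters out of exceptional} and \ref{claim:Mackey theory applied to metaplectic tensor}). The modulus character $\delta_{Q_{n-k-j,2j,k-j}}^{1/4}$ is the accumulation of the $\delta^{1/4}$-factors from the two applications of \eqref{eq:result of Kable for Jacquet module of exceptional}, of the distortion produced by conjugating the Jacquet functors by $w$ — the same cancellation as in \eqref{eq:explicating action of torus} — and of the Shalika integration. The genuinely new point is the pseudo-character $\gamma_{(j)}$: when $n$ is even the outer metaplectic tensor is insensitive to the pseudo-character and nothing is claimed, while when $n$ is odd one equates the genuine central value $\mathfrak{s}(zI_n)\mapsto\chi(z)^n\gamma(z)$ of $\theta_n$ with the product of the central values of $\theta_{n-k-j,\chi,\gamma_1}$ and $\theta_{k-j,\chi,\gamma_2}$, of the value of $\theta_{2j,\chi,\gamma_3}$ on $\mathfrak{s}(\mathrm{diag}(zI_j,zI_j))$, and of the Weil factor $\gamma_{\psi,(-1)^{j-1}}(z^j)$ coming from the $\widetilde{\mathbb{GL}}_j$-action in Theorem~\ref{theorem:twisted Jacquet modules of theta along generic character in k=n-k case}; using \eqref{eq:Weil factor identities}, \eqref{eq:block-compatibility} and $\sigma(zI_n,z'I_n)=(z,z')_2^{\lfloor n/2\rfloor}$, this forces \eqref{eq:condition to determine gamma j k}.

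The main obstacle is this metaplectic bookkeeping: one must follow the genuine central character and the Weil factors through every exchange-of-roots isomorphism, every conjugation by a Weyl element and every passage through the metaplectic tensor and its restriction, and extract from this the exact pseudo-character $\gamma_{(j)}$, i.e.\ the identity \eqref{eq:condition to determine gamma j k}, in the odd-$n$ case. By contrast, the underlying root combinatorics — the decomposition $U=W\times U_{j,j}$, the identification of the subgroups to be exchanged, and the check that Lemma~\ref{lemma:twisted theta U1 factors through theta U2} becomes applicable after conjugation — is routine, but it has to be set up uniformly in $n$, $k$ and $j$.
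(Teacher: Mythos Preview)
Your proposal is in the right spirit and invokes the right tools --- exchange of roots, Lemma~\ref{lemma:twisted theta U1 factors through theta U2}, Kable's formula~\eqref{eq:result of Kable for Jacquet module of exceptional}, and the one-dimensionality of Theorem~\ref{theorem:twisted Jacquet modules of theta along generic character in k=n-k case} --- but the organization differs from the paper's in a way that matters.

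The paper does not work with your decomposition $U=W\times U_{j,j}$ and the intermediate module $\theta_W$ (note $W$ is not the unipotent radical of any standard parabolic, so $\theta_W$ is not directly accessible). Instead it \emph{enlarges} the unipotent: the first step is Claim~\ref{claim:twisted Jacquet factors through larger unipotent}, which shows directly that $\theta_{U,\psi_j}=\theta_{U_{n-k-j,j,j,k-j},\psi_j}$, i.e.\ that the two extra blocks $V=U_{n-k-j,j}$ and $Y=U_{j,k-j}$ inside $\mathrm{St}_{n,k}(\psi_j)$ already act trivially. The proof of this claim is where the exchange of roots and Lemma~\ref{lemma:twisted theta U1 factors through theta U2} are actually used --- one shows $(\theta_{U,\psi_j})_{V,\mu}=0$ for every nontrivial $\mu$, and similarly for $Y$. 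Once this is done, $\theta_{U,\psi_j}=(\theta_{U_{n-k-j,2j,k-j}})_{U_{j,j},\psi_j}$ and the untwisted part is immediately given by \eqref{eq:result of Kable for Jacquet module of exceptional} and associativity.

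The second difference is in the final step. You propose to separate the $\GL_{2j}$ factor from the outer $\GL_{n-k-j}\times\GL_{k-j}$ by restricting to $p^{-1}(M^\square)$ via Claim~\ref{claim:Mackey theory applied to metaplectic tensor}; but restriction only identifies the module up to the ambiguity built into the metaplectic tensor, and does not by itself pin down the representation on the full stabilizer. The paper instead writes the triple metaplectic tensor $\theta_{n-k-j}\widetilde{\otimes}(\theta_{2j}\widetilde{\otimes}\theta_{k-j})$ as an honest induction from the finite-index subgroup $p^{-1}(\GL_{n-k-j}\times\GL_{2j}^\square\times\GL_{k-j})$, applies the Geometric Lemma over the finite double-coset space $\GL_{2j}^\square\backslash\GL_{2j}/\mathbb{GL}_j\cong F^*/F^{*2}$, and then uses the one-dimensionality of $(\theta_{2j})_{U_{j,j},\psi_j}$ to see that exactly one coset contributes. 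This is what produces the conjugation twist $\chi_z(g)=(\det g,z^j)_2$ on the outer metaplectic tensor and hence the precise formula~\eqref{eq:condition to determine gamma j k} for $\gamma_{(j)}$; your central-character matching argument for $\gamma_{(j)}$ is plausible but would have to be reconciled with this mechanism. Your alternative via embedding $\theta_n$ in a full parabolic induction and running the Geometric Lemma there is workable in principle, but controlling which double cosets survive is harder than in the paper's finite-index setup.
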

\begin{remark}
Note that $\gamma_{(j)}$ is really an $(n-2j)$-pseudo-character: when $j$ is even, $\gamma_{\psi,(-1)^{j-1}}(z^j)=1$ and $\lfloor (n-2j)/2\rfloor\equiv
\lfloor n/2\rfloor\ (2)$, while
if $j$ is odd, $\lfloor (n-2j)/2\rfloor\equiv\lfloor (n-2)/2\rfloor\ (2)$.
\end{remark}
\begin{proof}[Proof of Theorem~\ref{theorem:twisted Jacquet modules of theta along maximal parabolic}]
For $j=0$ this is \eqref{eq:result of Kable for Jacquet module of exceptional}, assume $j>0$. We can also assume that if $n=2k$, $j<k$, otherwise there is nothing to prove.
The assertion holds for $n=1,2$, assume $n\geq3$.
First we claim that $U_{n-k-j,2j,k-j}$ acts trivially on $\theta_{U,\psi_j}$.
\begin{claim}\label{claim:twisted Jacquet factors through larger unipotent}
$\theta_{U,\psi_j}=\theta_{U_{n-k-j,j,j,k-j},\psi_j}$.
\end{claim}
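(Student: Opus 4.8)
The plan is to recast the statement as a vanishing of twisted Jacquet modules and then settle that vanishing by the mechanism used in Lemma~\ref{lemma:twisted theta U1 factors through theta U2} and Theorem~\ref{theorem:theta non generic n greater than 3}, namely the non-genericity of $\theta_m$ for $m\geq 3$ together with Kable's formula \eqref{eq:result of Kable for Jacquet module of exceptional}.

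First I would reformulate. Label the four blocks of $\GL_n$ by $\mathrm{I},\mathrm{II},\mathrm{III},\mathrm{IV}$ of sizes $n-k-j,\,j,\,j,\,k-j$, so that $U=U_{n-k,k}$ occupies the $(\mathrm{I}\cup\mathrm{II})\times(\mathrm{III}\cup\mathrm{IV})$ corner, $\psi_j$ pairs the $\mathrm{II}\times\mathrm{III}$ sub-block, and the ``$v$'' and ``$y$'' blocks of $\mathrm{St}_{n,k}(\psi_j)$ form the abelian group $R:=U_{n-k-j,j}\times U_{j,k-j}$, sitting in the $\mathrm{I}\times\mathrm{II}$ and $\mathrm{III}\times\mathrm{IV}$ corners. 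Then $U_{n-k-j,j,j,k-j}=U\rtimes R$, and a one-line matrix check shows that $R$ normalizes $U$ and leaves $\psi_j$ fixed (conjugation by $R$ does not touch the $\mathrm{II}\times\mathrm{III}$ block); hence $\psi_j$ extends to a character of $U_{n-k-j,j,j,k-j}$ trivial on $R$, and $\widetilde{R}$ acts on $\theta_{U,\psi_j}$. As $\widetilde{R}$ is abelian, this action decomposes into isotypic components indexed by characters $\chi$ of $R$, the $\chi$-component being $\theta_{U_{n-k-j,j,j,k-j},\,\psi_j\chi}$; and the natural surjection $\theta_{U,\psi_j}\twoheadrightarrow\theta_{U_{n-k-j,j,j,k-j},\psi_j}$ is an isomorphism if and only if $R$ acts trivially, i.e. if and only if $\theta_{U_{n-k-j,j,j,k-j},\,\psi_j\chi}=0$ for every nontrivial $\chi$. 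This is the assertion I would prove.

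The key observation is that $\psi_j\chi$ with $\chi\neq1$ is always ``more degenerate'' than $\psi_j$ alone. If $\chi$ is nontrivial on some root of the $\mathrm{I}\times\mathrm{II}$ block, say at $(a,b)$ with $a\in\mathrm{I}$, $b\in\mathrm{II}$, then writing $b^{\ast}\in\mathrm{III}$ for the unique column with $(b,b^{\ast})$ in the support of $\psi_j$, the character $\psi_j\chi$ is nontrivial on both $(a,b)$ and $(b,b^{\ast})$, i.e. on a chain of length $3$ through $a,b,b^{\ast}$; the $\mathrm{III}\times\mathrm{IV}$ case is symmetric. Using that $M_{n-k-j,j,j,k-j}$ permutes the characters of $U_{n-k-j,j,j,k-j}$ (so the Jacquet functor only sees the orbit), together with the transitivity of $\GL_{n-k-j}$ on nonzero columns of the ``$v$'' block and of $\GL_{k-j}$ on nonzero rows of the ``$y$'' block, I would normalize $\chi$ to a standard representative whose support, together with that of $\psi_j$, still contains such a length-$3$ chain. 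Then, conjugating by a Weyl element exactly as in the reduction to \eqref{eq:applying factoring through} in the proof of Theorem~\ref{theorem:twisted Jacquet modules of theta along generic character in k=n-k case}, one brings $\theta_{U_{n-k-j,j,j,k-j},\,\psi_j\chi}$ into the form of a twisted Jacquet module whose character restricts to the standard nontrivial one on a $U_{1,n-1}$ (resp. $U_{n-1,1}$) factor, to which Lemma~\ref{lemma:twisted theta U1 factors through theta U2} applies.

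From here the argument runs parallel to the proof of Lemma~\ref{lemma:twisted theta U1 factors through theta U2}: by that lemma the module factors through $\theta_{U_{2,n-2}}$; applying Kable's formula \eqref{eq:result of Kable for Jacquet module of exceptional} and restricting to $\widetilde{\GL}_2^{\square}\times\widetilde{\GL}_{n-2}^{\square}$ via Claim~\ref{claim:Mackey theory applied to metaplectic tensor} expresses it through twisted Jacquet modules of $\theta_{n-2}$, and one runs a maximality argument (exactly as for $\theta_{V_1V_2,\psi}=0$ in that proof) which, because $\chi\neq1$ has lengthened the relevant chain beyond $2$, terminates at a twisted Jacquet module $(\theta_m)_{N_m,\psi}$ with $m\geq3$ — vanishing by Theorem~\ref{theorem:theta non generic n greater than 3} (for $m=3$, by \cite{FKS}). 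This yields $\theta_{U_{n-k-j,j,j,k-j},\,\psi_j\chi}=0$ for every nontrivial $\chi$, hence the claim. The only genuinely delicate point, and what I expect to be the main obstacle, is the combinatorial bookkeeping in the previous paragraph: choosing the Weyl element and the filtration of $U_{n-k-j,j,j,k-j}$ by root subgroups of $R$ so that every intermediate Jacquet module is taken along a (conjugate of a) maximal parabolic to which Kable's formula is applicable, and verifying that a nontrivial $\chi$ always forces the terminal Whittaker datum onto $\GL_m$ with $m\geq3$ rather than $m=2$, where $\theta_2$ is generic and the argument would break down.
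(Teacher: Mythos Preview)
Your strategy is the right one and matches the paper's: reformulate as the vanishing of $(\theta_{U,\psi_j})_{R,\chi}$ for every nontrivial $\chi$ on $R=V\times Y$, normalize $\chi$ using the stabilizer of $\psi_j$, and ultimately invoke Lemma~\ref{lemma:twisted theta U1 factors through theta U2}. But there is a genuine gap in the passage you describe as ``conjugating by a Weyl element exactly as in the reduction to \eqref{eq:applying factoring through}''. That reduction in Theorem~\ref{theorem:twisted Jacquet modules of theta along generic character in k=n-k case} is \emph{not} a Weyl conjugation alone: it is the exchange-of-roots isomorphism \eqref{iso:isomorphism exchange roots generic character n=2k} (Lemma~2.2 of \cite{GRS5}) followed by a Weyl conjugation, and the paper's proof of the present claim uses the same device. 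Without exchange of roots your plan breaks down as soon as $n-k-j\geq 2$ (respectively $k-j\geq 2$ on the $Y$ side): for a permutation $w$, the conjugate $wU_{n-k-j,j,j,k-j}w^{-1}$ contains $U_{1,n-1}$ only if the first block has size $1$, and contains $U_{n-1,1}$ only if the last block has size $1$. So for, say, $n=7$, $k=3$, $j=1$ there is simply no Weyl element putting $U_{3,1,1,2}$ into a shape to which Lemma~\ref{lemma:twisted theta U1 factors through theta U2} applies. This is not ``combinatorial bookkeeping'': it is a structural obstruction, and the exchange of roots is precisely the tool that removes it, trading the sub-block $V_3\subset V$ (on which the normalized $\mu$ is trivial) for a block $E=\overline{U}_{n-k-j-1,1}$ on the opposite side of the diagonal; after that swap a Weyl permutation does align everything so that the residual character is nontrivial on a coordinate of $U_{2,n-2}$, whence Lemma~\ref{lemma:twisted theta U1 factors through theta U2} gives the vanishing in one stroke.

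Two smaller points. First, the paper handles $V$ and $Y$ sequentially (first $\theta_{U,\psi_j}=\theta_{UV,\psi_j}$, then $\theta_{UV,\psi_j}=\theta_{UVY,\psi_j}$) rather than treating all of $R$ at once; this keeps each exchange-of-roots step clean. Second, your final paragraph overcomplicates the endgame: once the exchange of roots and Weyl conjugation are done, the surviving character is already nontrivial on a root inside $U_{2,n-2}$, so Lemma~\ref{lemma:twisted theta U1 factors through theta U2} gives zero immediately---no further maximality induction is needed here (that induction is internal to the proof of Lemma~\ref{lemma:twisted theta U1 factors through theta U2}, not to this claim).
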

The proof is given below. Assuming this, according to \eqref{eq:result of Kable for Jacquet module of exceptional} and \eqref{eq:Jacquet nontwisted factors throuh metaplectic},
\begin{align*}
\theta_{U_{n-k-j,2j,k-j}}&=
\delta_{Q_{n-k-j,k+j}}^{1/4}
(\theta_{n-k-j,\chi,\gamma_1}\widetilde{\otimes}_{\chi\gamma}\theta_{k+j,\chi,\gamma'})_{U_{2j,k-j}}
\\&=\delta_{Q_{n-k-j,2j,k-j}}^{1/4}(\theta_{n-k-j,\chi,\gamma_1}\widetilde{\otimes}_{\chi\gamma}(\theta_{2j,\chi,\gamma_3}\widetilde{\otimes}_{\chi\gamma'}\theta_{k-j,\chi,\gamma_2})).
\end{align*}
Here $U_{2j,k-j}<\GL_{k+j}<M_{n-k-j,k+j}$. Hence, noting $\psi_j|_{U_{n-k-j,2j,k-j}}=1$,
\begin{align*}
\theta_{U,\psi_j}=(\theta_{U_{n-k-j,2j,k-j}})_{U_{j,j},\psi_j}=
\delta_{Q_{n-k-j,2j,k-j}}^{1/4}(\theta_{n-k-j,\chi,\gamma_1}\widetilde{\otimes}_{\chi\gamma}(\theta_{2j,\chi,\gamma_3}\widetilde{\otimes}_{\chi\gamma'}\theta_{k-j,\chi,\gamma_2}))_{U_{j,j},\psi_j},
\end{align*}
where $U_{j,j}<\GL_{2j}<M_{n-k-j,2j,k-j}$.

To compute the Jacquet functor with respect to $U_{j,j}$ and $\psi_j$, we use the Geometric Lemma of Bernstein and Zelevinsky \cite{BZ2} (Theorem~5.2).
According to the associativity of the metaplectic
tensor (\cite{Kable} Proposition~3.5), and the fact that induction of a semisimple representation from a subgroup of finite index is semisimple,
\begin{align}
\nonumber&\theta_{n-k-j,\chi,\gamma_1}\widetilde{\otimes}_{\chi\gamma}(\theta_{2j,\chi,\gamma_3}\widetilde{\otimes}_{\chi\gamma'}\theta_{k-j,\chi,\gamma_2})
\\\nonumber
&=\ind_{p^{-1}(\GL_{n-k-j}\times\GL_{2j}^{\square}\times\GL_{k-j})}^{\widetilde{M}_{n-k-j,2j,k-j}}((\theta_{n-k-j,\chi,\gamma_1}\widetilde{\otimes}_{\chi\gamma''}\theta_{k-j,\chi,\gamma_2})
\otimes\sigma),
\end{align}
where $\sigma\in\Alg{\widetilde{\GL}_{2j}^{\square}}$ is a suitable irreducible summand of $\theta_{2j,\chi,\gamma_3}^{\square}$, which depends on $\gamma''$
precisely when $n$ is odd. 
The double coset space $\rmodulo{\lmodulo{\GL_{2j}^{\square}}{\GL_{2j}}}{\mathbb{GL}_j}$ contains $[F^*:F^{*2}]$ representatives and we may take them in the form
$g_z=\mathrm{diag}(zI_j,I_j)$, where $z$ varies over a set of representatives of $\lmodulo{F^{*2}}{F^*}$. These representatives normalize
$\GL_{n-k-j}\times\GL_{2j}^{\square}\times\GL_{k-j}$, $\GL_{n-k-j}\times \mathbb{GL}_j\times\GL_{k-j}$ and
$U_{j,j}$, and $\rconj{g_z}\psi_{j}=\psi(z\cdot)_j$, i.e., $\rconj{g_z}\psi_{j}(\left(\begin{smallmatrix}I_j&v\\&I_j\end{smallmatrix}\right))=\psi(z\cdot\mathrm{tr}(v))$. 
Therefore
\begin{align*}
(\theta_{n-k-j,\chi,\gamma_1}\widetilde{\otimes}_{\chi\gamma}(\theta_{2j,\chi,\gamma_3}\widetilde{\otimes}_{\chi\gamma'}\theta_{k-j,\chi,\gamma_2}))_{U_{j,j},\psi_j}
\end{align*}
is glued from representations
\begin{align*}
\ind_{p^{-1}(\GL_{n-k-j}\times \mathbb{GL}_j^{\square}\times\GL_{k-j})}^{p^{-1}(\GL_{n-k-j}\times \mathbb{GL}_j\times\GL_{k-j})}(\rconj{g_z^{-1}}((\theta_{n-k-j,\chi,\gamma_1}\widetilde{\otimes}_{\chi\gamma''}\theta_{k-j,\chi,\gamma_2})\otimes
\sigma_{U_{j,j},\psi(z\cdot)_j})),\qquad z\in \lmodulo{F^{*2}}{F^*}.
\end{align*}
Since $\mathbb{GL}_j^{\square}=\mathbb{GL}_j$, $\widetilde{\mathbb{GL}}_j$ and $p^{-1}(\GL_{n-k-j}\times\GL_{k-j})$ commute 
whence each of these representations equals
\begin{align*}
\rconj{g_z^{-1}}(\theta_{n-k-j,\chi,\gamma_1}\widetilde{\otimes}_{\chi\gamma''}\theta_{k-j,\chi,\gamma_2})\otimes
\rconj{g_z^{-1}}\sigma_{U_{j,j},\psi(z\cdot)_j}.
\end{align*}
The block-compatibility formula \eqref{eq:block-compatibility} implies (see \cite{Kable} p.~748)
\begin{align*}
\rconj{g_z^{-1}}(\theta_{n-k-j,\chi,\gamma_1}\widetilde{\otimes}_{\chi\gamma''}\theta_{k-j,\chi,\gamma_2})=\chi_{z}\otimes(\theta_{n-k-j,\chi,\gamma_1}\widetilde{\otimes}_{\chi\gamma''}\theta_{k-j,\chi,\gamma_2}),
\end{align*}
where $\chi_z$ is the non-genuine character of $\widetilde{M}_{n-k-j,k-j}$ given by $\chi_z(g)=(\det{g},z^j)_2$. This is just a twist of the metaplectic
tensor and will eventually be reflected on the pseudo-character changing from $\gamma''$ to $\gamma_{(j)}$.
We show that $\rconj{g_z^{-1}}\sigma_{U_{j,j},\psi(z\cdot)_j}$ vanishes for all but one representative $z_0$, for which it equals
$(\theta_{2j,\chi,\gamma_3})_{U_{j,j},\psi_j}$ (both representations belong in $\Alg{\widetilde{\mathbb{GL}}_j}$). This will complete the proof.

In general let $\pi\in\Alg_{\mathrm{irr}}{\widetilde{\GL}_{2j}}$ be genuine. Then $\pi^{\square}$ is the direct sum of
$[F^*:F^{*2}]$ irreducible representations $\pi_i$ and moreover, for each fixed summand $\rho=\pi_i$, $\pi=\ind_{\widetilde{\GL}_{2j}^{\square}}^{\widetilde{\GL}_{2j}}(\rho)$ (\cite{Kable} Proposition~3.2). Now
an application of \cite{BZ2} (Theorem~5.2) similar to above implies that $\pi_{U_{j,j},\psi_j}$ is glued from representations
$\rconj{g_z^{-1}}\rho_{U_{j,j},\psi(z\cdot)_j}$, where $z$ varies over the different square classes of $F^*$. Furthermore, since $\psi$ and $\psi(y^2\cdot)$ are conjugates in $\widetilde{\GL}_{2j}^{\square}$ (use $g_{y^{-2}}$),
the spaces $\rho_{U_{j,j},\psi_j}$ and $\rho_{U_{j,j},\psi(y^2\cdot)_j}$ are isomorphic.
Assuming $\pi_{U_{j,j},\psi_j}$ is one-dimensional,
these observations imply that for each $\rho$, $\rconj{g_{z_0}^{-1}}\rho_{U_{j,j},\psi(z_0\cdot)_j}\ne0$
for some $z_0$ and furthermore, $z_0$ is unique modulo $F^{*2}$ with this property.

By Theorem~\ref{theorem:twisted Jacquet modules of theta along generic character in k=n-k case},
$(\theta_{2j,\chi,\gamma_3})_{U_{j,j},\psi_j}$ is one-dimensional, hence for each summand $\sigma$ there is a unique representative $z_0\in\lmodulo{F^{*2}}{F^*}$ such that $\rconj{g_{z_0}^{-1}}\sigma_{U_{j,j},\psi(z_0\cdot)_j}=(\theta_{2j,\chi,\gamma_3})_{U_{j,j},\psi_j}$.


\begin{proof}[Proof of Claim~\ref{claim:twisted Jacquet factors through larger unipotent}]
Let $V=U_{n-k-j,j}$ be embedded in $\mathrm{St}_{n,k}(\psi_j)$ as the subgroup of matrices corresponding to the coordinates of $v$
and denote its elements by
\begin{align*}
v(v_1,v_2,v_3,v_4)=\left(\begin{array}{cccc}I_{n-k-j-1}&&v_3&v_2\\&1&v_1&v_4\\&&1\\&&&I_{j-1}\end{array}\right).
\end{align*}
The group $V$ is abelian and normalizes $U$ (recall that $U=U_{n-k,k}$). First we show
\begin{align}\label{eq:top left radical acts trivially}
\theta_{U,\psi_j}=\theta_{UV,\psi_j}.
\end{align}

It is enough to prove
$(\theta_{U,\psi_j})_{V,\mu}=0$ for any nontrivial character $\mu$ of $V$. The group $\mathrm{St}_{n,k}(\psi_j)\cap M_{n-k-j,j,j}$ (i.e., the coordinates
of $b$ and $c$ in $\mathrm{St}_{n,k}(\psi_j)$) acts on the characters of $V$
and we may assume that $\mu$ does not depend on the coordinates of $v_3$ and $v_4$, and $\mu(v(v_1,0,0,0))=\psi(v_1)$.

As in the proof of Theorem~\ref{theorem:twisted Jacquet modules of theta along generic character in k=n-k case} and with a similar notation, if
\begin{align*}
V_{3}=\{v(0,0,v_3,0)\}<V,\quad V_{1,2,4}=\{v(v_1,v_2,0,v_4)\}<V, \quad E=\overline{U}_{n-k-j-1,1}<\GL_{n-k-j},
\end{align*}
where $\overline{U}_{n-k-j-1,1}$ is the unipotent radical opposite to $U_{n-k-j-1,1}$,
Lemma~2.2 of Ginzburg, Rallis and Soudry \cite{GRS5} implies the following isomorphism in $\Alg{V_{1,2,4}}$,
\begin{align*}
(\theta_{U,\psi_j})_{V,\mu}=(\theta_{U,\psi_j})_{V_{1,2,4}\rtimes E,\mu}.
\end{align*}

Conjugating the \rhs\ by $\left(\begin{smallmatrix}&I_{j+1}\\I_{n-k-j-1}\end{smallmatrix}\right)$,
we see that $(\theta_{U,\psi_j})_{V_{1,2,4}E,\mu}$ is a quotient of
\begin{align*}
(\theta_{U_{1,n-1},\psi})_{U_{2,n-2}\cap U,\psi'}, \qquad \psi(u)=\psi(u_{1,2}), \qquad \psi'(u)=\psi(u_{2,n-k+1}),
\end{align*}
where $\psi$ was obtained from $\mu|_{\{v(v_1,0,0,0)\}}$ and $\psi'$ from $\psi_j$. This space vanishes according to Lemma~\ref{lemma:twisted theta U1 factors through theta U2}.
Hence $(\theta_{U,\psi_j})_{V,\mu}=0$ and \eqref{eq:top left radical acts trivially} is proved.

Now let $Y=U_{j,k-j}$ be embedded in $\mathrm{St}_{n,k}(\psi_j)$ as the subgroup of matrices corresponding to the coordinates of $y$,
\begin{align*}
y=y(y_1,y_2,y_3,y_4)=\left(\begin{array}{cccc}I_{j-1}&&y_4&y_2\\&1&y_1&y_3\\&&1\\&&&I_{k-j-1}\end{array}\right).
\end{align*}
We show $(\theta_{UV,\psi_j})_{Y,\mu}=0$ for any nontrivial character $\mu$ of $Y$. The proof is a repetition of the argument above, this time
$\mu$ can be assumed to depend only on $y_1$ and $y_2$, we take $E=\overline{U_{1,k-j-1}}$ and conjugate using
$\left(\begin{smallmatrix}&I_{k-j-1}\\I_{j+1}\end{smallmatrix}\right)$ (embedded in the bottom right corner of $\GL_n$). Then the result follows
from Lemma~\ref{lemma:twisted theta U1 factors through theta U2} ($\theta_{U_{n-1,1},\psi}$ is a quotient of $\theta_{U_{n-2,2}}$).

Summing up, we have shown $(\theta_{U,\psi_j})=(\theta_{UVY,\psi_j})$ and since $UVY=U_{n-k-j,j,j,k-j}$, the claim is proved.
\end{proof}
\end{proof}

\section{Distinguished representations}\label{section:Distinguished representations}
Let $\tau\in\Alg{\GL_n}$ be admissible and assume that $\tau$ admits a central character $\omega_{\tau}$. Given a pair of exceptional representations
$\theta$ and $\theta'$ of $\widetilde{\GL}_n$, we say that $\tau$ is $(\theta,\theta')$-distinguished if
\begin{align}\label{homspace:main for distinguished}
\Hom_{\GL_n}(\theta\otimes\theta',\tau^{\vee})\ne0.
\end{align}
In light of Claim~\ref{claim:taking characters out of exceptional}, the spaces $\theta_{\chi,\gamma}$ as $\chi$ and $\gamma$ vary are
twists of each other by characters of $\GL_n$. Since $\theta_{\chi,\gamma}=\chi\theta_{1,\gamma}$, it is natural to
fix $\chi=\chi'=1$. Define $\tau$ to be distinguished, if it is $(\theta_{1,\gamma},\theta_{1,\gamma'})$-distinguished for some pair of pseudo-characters $\gamma$ and $\gamma'$.

A distinguished representation $\tau$ always satisfies $\omega_{\tau}^2=1$. In particular if $\tau\in\Alg_{\mathrm{esqr}}{\GL_n}$ is distinguished,
it is already unitary whence $\tau\in\Alg_{\mathrm{sqr}}{\GL_n}$. If $n=1$, these are the square-trivial characters of $F^*$.

The following simple claim explains the motivation for removing any specific choice of pseudo-characters from the definition.
Note that when $n$ is even, the pseudo-characters are redundant because they do not affect the exceptional representations (see Section~\ref{subsection:The exceptional representations}).
\begin{claim}\label{claim:eta tau is dist when tau is for odd n}
If $n$ is odd, for any square-trivial character $\eta$ of $F^*$, $\tau$ is distinguished if and only if $\eta\tau$ is.
\end{claim}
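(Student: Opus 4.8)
The plan is to reduce everything to the observation that, for odd $n$, the family $\{\theta_{1,\gamma}\}$ (as $\gamma$ ranges over pseudo-characters) is stable under twisting by a square-trivial character. Fix a pseudo-character $\gamma$ and let $\eta$ be a square-trivial character of $F^*$, regarded as the character $g\mapsto\eta(\det g)$ of $\GL_n$. By Claim~\ref{claim:taking characters out of exceptional} (the identity $\theta_{\chi,\gamma}=\chi\theta_{1,\gamma}$ with $\chi=\eta$) we have $\eta\theta_{1,\gamma}=\theta_{\eta,\gamma}$. I would then show $\theta_{\eta,\gamma}=\theta_{1,\gamma\eta}$, where $\gamma\eta$ is again a pseudo-character: it satisfies the defining functional equations since $\eta$ is a genuine square-trivial character, i.e. $(\gamma\eta)(xy)=(\gamma\eta)(x)(\gamma\eta)(y)(x,y)_2^{\lfloor n/2\rfloor}$ and $(\gamma\eta)(x^2)=\gamma(x^2)\eta(x)^2=1$. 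To identify the two representations it suffices to compare their exceptional characters on $C_{\widetilde{T}_n}$: for $\epsilon\in\mu_2$, $z\in F^*$ and $t\in T_n^2$,
\[
\xi_{\eta,\gamma}(\epsilon\mathfrak{s}(zI_n)\mathfrak{s}(t))=\epsilon\gamma(z)\eta(z^n\det{t})\delta_{B_n}^{1/4}(t).
\]
Since $\det{t}\in F^{*2}$, the term $\eta(\det{t})$ is trivial because $\eta$ is square-trivial; and $\eta(z^n)=\eta(z)$ because $n$ is odd. Hence the right-hand side equals $\epsilon(\gamma\eta)(z)\delta_{B_n}^{1/4}(t)=\xi_{1,\gamma\eta}(\epsilon\mathfrak{s}(zI_n)\mathfrak{s}(t))$, so $\xi_{\eta,\gamma}=\xi_{1,\gamma\eta}$ on $C_{\widetilde{T}_n}$ and therefore $\theta_{\eta,\gamma}\isomorphic\theta_{1,\gamma\eta}$.

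\textbf{Step 2 (transporting the Hom-space).} Assume $\tau$ is distinguished, say $\Hom_{\GL_n}(\theta_{1,\gamma}\otimes\theta_{1,\gamma'},\tau^{\vee})\ne0$ for pseudo-characters $\gamma,\gamma'$. Since $\eta^2=1$ we have $(\eta\tau)^{\vee}=\eta^{-1}\tau^{\vee}=\eta\tau^{\vee}$. Twisting a $\GL_n$-intertwining operator by $\eta$, and using $\eta(\theta_{1,\gamma}\otimes\theta_{1,\gamma'})\isomorphic(\eta\theta_{1,\gamma})\otimes\theta_{1,\gamma'}$ together with Step~1, we get
\[
0\ne\Hom_{\GL_n}(\theta_{1,\gamma}\otimes\theta_{1,\gamma'},\tau^{\vee})\isomorphic\Hom_{\GL_n}\!\big((\eta\theta_{1,\gamma})\otimes\theta_{1,\gamma'},\eta\tau^{\vee}\big)=\Hom_{\GL_n}\!\big(\theta_{1,\gamma\eta}\otimes\theta_{1,\gamma'},(\eta\tau)^{\vee}\big).
\]
As $\gamma\eta$ and $\gamma'$ are pseudo-characters, this exhibits $\eta\tau$ as distinguished. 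The converse is the same argument applied to $\eta\tau$ in place of $\tau$ (equivalently, one invokes the bijection $\gamma\mapsto\gamma\eta$ of the set of pseudo-characters onto itself and the identity $\eta^2=1$), which recovers $\tau$ distinguished. This proves the claim.

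\textbf{Main obstacle.} There is essentially one point requiring care, namely Step~1: checking that $\eta\theta_{1,\gamma}$ remains of the form $\theta_{1,\gamma_0}$ rather than merely some $\theta_{\chi,\gamma_0}$ with $\chi\ne1$. The subtlety is the a priori troublesome factor $\eta(\det{t})$ in the exceptional character, which vanishes precisely because $\det{t}$ is a square and $\eta$ is quadratic, while the exponent $\eta(z)^n$ collapses to $\eta(z)$ only for odd $n$ (for even $n$ the same computation gives $\eta\theta\isomorphic\theta$, which is why the statement is phrased for odd $n$). Everything else is routine manipulation of contragredients and one-dimensional twists.
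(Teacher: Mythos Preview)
Your proof is correct and follows essentially the same approach as the paper's: both hinge on the observation that for odd $n$ one has $\eta\theta_{1,\gamma_0}=\theta_{1,\eta\gamma_0}$, which you verify by direct comparison of exceptional characters and the paper obtains by appealing to Claim~\ref{claim:taking characters out of exceptional} together with the same remark that $\eta(z)^n=\eta(z)$. Your Step~1 simply makes explicit the computation that the paper leaves implicit in its parenthetical reference.
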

\begin{proof}[Proof of Claim~\ref{claim:eta tau is dist when tau is for odd n}]
For a fixed pseudo-character $\gamma_0$, the set $\gamma/\gamma_0$ with $\gamma$ varying over the
set of pseudo-characters, exhausts all square-trivial characters
of $F^*$. Then if $\tau$ is $(\theta_{1,\gamma},\theta_{1,\gamma_0})$-distinguished and $\eta=\gamma'/\gamma_0$,
$\eta\theta_{1,\gamma_0}=\theta_{1,\gamma'}$ (see Claim~\ref{claim:taking characters out of exceptional}, $\eta(z)^n=\eta(z)$ when $n$ is odd) and
$\eta\tau$ is $(\theta_{1,\gamma},\theta_{1,\gamma'})$-distinguished.
\end{proof}

This claim is also useful for the following observation.
Assume we have, for an admissible representation $\tau_1\otimes\tau_2\in\Alg{M_{n-k,k}}$ (i.e., $\tau_1\in\Alg{\GL_{n-k}}$,
$\tau_2\in\Alg{\GL_k}$ and both are admissible),
\begin{align*}
\Hom_{M_{n-k,k}}((\theta_{n-k}
\widetilde{\otimes}\theta_{k})\otimes(\theta'_{n-k}
\widetilde{\otimes}\theta'_{k}),\tau_1^{\vee}\otimes\tau_2^{\vee})\ne0.
\end{align*}
According to Corollary~\ref{corollary:hom of tensor separates}, if $k$ is even, $\tau_2$ is distinguished. In the
odd case, the corollary only implies that $\eta\tau_2$ is distinguished for some $\eta$ with $\eta^2=1$. Then by
Claim~\ref{claim:eta tau is dist when tau is for odd n}, $\tau_2$ is distinguished also when $n$ is odd. Similarly, we deduce that
$\tau_1$ is distinguished.

The following lemma is the application of the results of Section~\ref{section:Heisenberg jacquet modules} to the study of
\eqref{homspace:main for distinguished} and will be used repeatedly below.
\begin{lemma}\label{lemma:repeated filtration argument for 2 blockes}
Let $\theta=\theta_{n,1,\gamma}$, $\theta'=\theta_{n,1,\gamma'}$ and $\tau_1\otimes\tau_2\in\Alg{M_{n-k,k}}$ be an admissible representation. Assume
\begin{align}\label{eq:assumption dist 2 blocks}
\Hom_{M_{n-k,k}}((\theta\otimes\theta')_{U_{n-k,k}},\delta_{Q_{n-k,k}}^{1/2}\tau_1^{\vee}\otimes\tau_2^{\vee})\ne0.
\end{align}
Then for some $0\leq j\leq \min(n-k,k)$,
\begin{align*}
\Hom_{L_{n,k,j}}(\xi_{n,k,j}\otimes1,
(\delta_{Q_{n-k-j,j}}^{-1/2}(\tau_1)_{U_{n-k-j,j}}\otimes\delta_{Q_{j,k-j}}^{-1/2}(\tau_2)_{U_{j,k-j}})^{\vee})\ne0.
\end{align*}
Here $L_{n,k,j}=\GL_{n-k-j}\times\mathbb{GL}_j\times\GL_{k-j}$, where $\mathbb{GL}_j$ is the diagonal embedding of $\GL_j$ in $\mathrm{St}_{n-k,k}(\psi_j)$;
$\xi_{n,k,j}$ is the following representation of $p^{-1}(\GL_{n-k-j}\times \GL_{k-j})$,
\begin{align*}
&\xi_{n,k,j}=(\theta_{n-k-j,1,\gamma_1}\widetilde{\otimes}_{\gamma_{(j)}}\theta_{k-j,1,\gamma_2})\otimes
(\theta_{n-k-j,1,\gamma_1'}\widetilde{\otimes}_{\gamma'_{(j)}}\theta_{k-j,1,\gamma_2'}),
\end{align*}
where $\gamma_i$, $\gamma_i'$ are arbitrary, $\gamma_{(j)}$ is given by \eqref{eq:condition to determine gamma j k} with respect to $\gamma$ and the character $\psi$, and
$\gamma'_{(j)}$ is given by \eqref{eq:condition to determine gamma j k} with $\gamma'$ and $\psi^{-1}$; $1$ is the trivial character of $\mathbb{GL}_j$.
\end{lemma}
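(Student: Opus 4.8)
The plan is to apply the twisted Jacquet module computation of Theorem~\ref{theorem:twisted Jacquet modules of theta along maximal parabolic} to each of the two exceptional factors in $(\theta\otimes\theta')_{U_{n-k,k}}$, then reorganize the resulting $M_{n-k,k}$-module via Lemma~\ref{lemma:nontwisted Jacquet of tensor of two is one}, and finally transfer the Jacquet functors on the ambient $\GL_n$-quotients down to the further Jacquet modules of $\tau_1$ and $\tau_2$ appearing in the statement. First I would recall that $U_{n-k,k}$ is normal in $Q_{n-k,k}$ and that $M=M_{n-k,k}$ acts on the characters of $U=U_{n-k,k}$ with orbit representatives $\psi_0,\ldots,\psi_{\min(n-k,k)}$. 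Since $\theta$ is a quotient of a principal series, I can write, in $\Alg{\widetilde{M}}$, the Jacquet module $\theta_U$ as a representation glued from the twisted pieces; more precisely, since $(\theta\otimes\theta')_U$ as an $M$-module carries the combined action, I would instead note that $\theta_U$ (resp.\ $\theta'_U$), as a $\widetilde{Q}_{n-k,k}/U$-object, is controlled orbit-by-orbit: writing $\mathcal{O}_j$ for the orbit of $\psi_j$, the ``$\psi_j$-isotypic'' contribution to $\theta_U$ is the compact induction $\ind_{\widetilde{\mathrm{St}}_{n,k}(\psi_j)U}^{\widetilde{M}U}$ of $\theta_{U,\psi_j}$, which by Theorem~\ref{theorem:twisted Jacquet modules of theta along maximal parabolic} equals $\delta^{1/4}(\theta_{n-k-j,1,\gamma_1}\widetilde{\otimes}_{\gamma_{(j)}}\theta_{k-j,1,\gamma_2})\otimes(\theta_{2j,1,\gamma_3})_{U_{j,j},\psi_j}$, and similarly for $\theta'$ with $\psi^{-1}$ (i.e.\ $\gamma'_{(j)}$ computed from $\psi^{-1}$).

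Next I would feed these into Lemma~\ref{lemma:nontwisted Jacquet of tensor of two is one}, taking $Q=M\ltimes U$, the normal subgroup $U$, the two characters $\psi$ of $U$ coming from $\theta$ and $\psi'$ from $\theta'$: a nonvanishing $\Hom$ into $\delta^{1/2}\tau_1^\vee\otimes\tau_2^\vee$ (which is a representation of $M$, hence on which $U$ acts trivially) forces the $U$-coinvariants of the outer tensor to be nonzero, and the lemma says this survives only from the piece where the two characters are inverse to one another, namely $\psi_j$ against $\psi_j^{-1}=\psi_{-j}$; these are $M$-conjugate, so the surviving term for each $j$ is the compact induction over $\mathrm{St}_{n,k}(\psi_j)$ of $\theta_{U,\psi_j}\otimes\theta'_{U,\psi_j^{-1}}$. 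On this term the one-dimensional Shalika pieces $(\theta_{2j,1,\gamma_3})_{U_{j,j},\psi_j}$ and $(\theta'_{2j,1,\gamma_3'})_{U_{j,j},\psi_j^{-1}}$ combine, by Theorem~\ref{theorem:twisted Jacquet modules of theta along generic character in k=n-k case} (with $\chi=1$), to the \emph{trivial} character of $\widetilde{\mathbb{GL}}_j$ — because the Weil factors $\gamma_{\psi,(-1)^{j-1}}$ and $\gamma_{\psi^{-1},(-1)^{j-1}}$ are mutually inverse — leaving exactly the $1$ on $\mathbb{GL}_j$ in the statement. The metaplectic tensors on $\GL_{n-k-j}\times\GL_{k-j}$ and $\GL_{n-k-j}'\times\GL_{k-j}'$ assemble, after conjugating the $\theta'$-side so that its $\psi^{-1}$-convention matches, into precisely $\xi_{n,k,j}$, with $\gamma_{(j)}$ as in \eqref{eq:condition to determine gamma j k} for $(\gamma,\psi)$ and $\gamma'_{(j)}$ for $(\gamma',\psi^{-1})$.

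Finally, I would translate the datum ``$\Hom$ into $\delta_{Q_{n-k,k}}^{1/2}\tau_1^\vee\otimes\tau_2^\vee$'' across the Jacquet functor $(-)_{U_{j,j}}$ that sits on the $\GL_{2j}$-block of $\mathrm{St}_{n,k}(\psi_j)$. Using Frobenius reciprocity for the compact induction $\ind_{\mathrm{St}_{n,k}(\psi_j)U}^{MU}$ (so that a nonzero $\Hom$ over $MU$ yields a nonzero $\Hom$ over $\mathrm{St}_{n,k}(\psi_j)$ with the appropriate modulus twist), and then pushing $\tau_i^\vee$ through the remaining unipotent radicals — $U_{n-k-j,j}$ inside $\GL_{n-k}$ gives the Jacquet module $(\tau_1)_{U_{n-k-j,j}}$, and $U_{j,k-j}$ inside $\GL_k$ gives $(\tau_2)_{U_{j,k-j}}$ — one matches the modulus characters $\delta_{Q_{n-k,k}}^{1/2}$, $\delta_{Q_{n-k-j,2j,k-j}}^{1/4}$, $\delta_{Q_{n-k-j,j}}$ and $\delta_{Q_{j,k-j}}$ and finds the normalizations in the statement (the $\delta^{-1/2}$ factors on each Jacquet module and the absorbed $\delta^{1/4}$). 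The contragredient passes through all of these operations in the usual way, yielding $(\delta_{Q_{n-k-j,j}}^{-1/2}(\tau_1)_{U_{n-k-j,j}}\otimes\delta_{Q_{j,k-j}}^{-1/2}(\tau_2)_{U_{j,k-j}})^\vee$, with $\xi_{n,k,j}\otimes 1$ on the source over $L_{n,k,j}=\GL_{n-k-j}\times\mathbb{GL}_j\times\GL_{k-j}$. I expect the main obstacle to be the bookkeeping of the metaplectic covers and the Weil factors: one must check that restricting to the square-determinant subgroups (so that all the relevant blocks genuinely commute, per \eqref{eq:block-compatibility}) does not lose the $\Hom$, that the character $\chi_z$-type twists arising from the double-coset representatives $g_z$ in the proof of Theorem~\ref{theorem:twisted Jacquet modules of theta along maximal parabolic} get correctly reabsorbed into the pseudo-characters $\gamma_{(j)},\gamma'_{(j)}$, and that the Shalika characters of $\theta_{2j}$ and $\theta'_{2j}$ really do cancel to the trivial character rather than to some nontrivial square-trivial twist — this last point is exactly where the odd/even parity of $j$ (and the remark following Theorem~\ref{theorem:twisted Jacquet modules of theta along maximal parabolic}) enters, together with Claim~\ref{claim:eta tau is dist when tau is for odd n} to mop up the remaining ambiguity when $n$ is odd.
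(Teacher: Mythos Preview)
Your overall strategy matches the paper's proof exactly: filter $\theta$ and $\theta'$ as $\widetilde{Q}_{n-k,k}$-modules by the $M_{n-k,k}$-orbits of characters of $U_{n-k,k}$, apply Lemma~\ref{lemma:nontwisted Jacquet of tensor of two is one} so that only the diagonal pairs $(\psi_j,\psi_j^{-1})$ survive in $(\theta\otimes\theta')_{U_{n-k,k}}$, plug in Theorems~\ref{theorem:twisted Jacquet modules of theta along generic character in k=n-k case} and~\ref{theorem:twisted Jacquet modules of theta along maximal parabolic}, and then descend to the stabilizer via Frobenius reciprocity.

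However, your final paragraph mishandles the reciprocity step. You write ``Frobenius reciprocity for the compact induction \dots\ pushing $\tau_i^\vee$ through the remaining unipotent radicals gives the Jacquet module $(\tau_1)_{U_{n-k-j,j}}$.'' Read literally this is wrong: the Jacquet module of $\tau_1^\vee$ along $U_{n-k-j,j}$ is $((\tau_1)_{\overline{U}_{n-k-j,j}})^\vee$ by Casselman, not $((\tau_1)_{U_{n-k-j,j}})^\vee$, so you would land on the opposite unipotent radical and not recover the stated conclusion. The paper avoids this by dualizing \emph{first}: one rewrites
\[
\Hom_{M_{n-k,k}}\bigl(\ind_{L_{n,k,j}V}^{Q_{n-k,k}}(\mathrm{mod}_V^{1/2}(\xi_{n,k,j}\otimes1)),\ \delta_{Q_{n-k,k}}^{1/2}\tau_1^\vee\otimes\tau_2^\vee\bigr)
\]
as $\Hom_{M_{n-k,k}}(\tau_1\otimes\tau_2,\ \Ind_{L_{n,k,j}V}^{Q_{n-k,k}}(\cdots)^\vee)$, and only then applies ordinary Frobenius for $\Ind$. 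This way it is $(\tau_1\otimes\tau_2)_V$, not $(\tau_1^\vee\otimes\tau_2^\vee)_V$, that appears, and the contragredient is taken at the very end. With this correction the modulus bookkeeping is routine.

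Finally, the obstacles you anticipate are not obstacles. The cancellation of the two one-dimensional pieces on $\widetilde{\mathbb{GL}}_j$ to the trivial character is exact, not merely up to a square-trivial twist: $\gamma_{\psi,(-1)^{j-1}}\cdot\gamma_{\psi^{-1},(-1)^{j-1}}=1$ identically, so no appeal to Claim~\ref{claim:eta tau is dist when tau is for odd n} is needed in this lemma. Likewise, the $\chi_z$-twists and the square-determinant restrictions are already absorbed into the statement of Theorem~\ref{theorem:twisted Jacquet modules of theta along maximal parabolic}; once that theorem is quoted, nothing further remains to be checked at this stage.
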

\begin{proof}[Proof of Lemma~\ref{lemma:repeated filtration argument for 2 blockes}]
By an analog of the Geometric Lemma of Bernstein and Zelevinsky (\cite{BZ2} Theorem~5.2 and \cite{BZ1} 5.9-5.12), in $\Alg{\widetilde{Q}_{n-k,k}}$,
$\theta$ is glued from
\begin{align*}
\ind_{\widetilde{\mathrm{St}}_{n,k}(\psi_j)U_{n-k,k}}^{\widetilde{Q}_{n-k,k}}(\theta_{U_{n-k,k},\psi_j}), \qquad 0\leq j\leq \min(n-k,k).
\end{align*}
Regarding the notation, see Section~\ref{section:Heisenberg jacquet modules}.
According to Lemma~\ref{lemma:nontwisted Jacquet of tensor of two is one}, $(\theta\otimes\theta')_{U_{n-k,k}}$ is glued in $\Alg{Q}_{n-k,k}$ from
\begin{align}\label{eq:filtration quotients}
\ind_{\mathrm{St}_{n,k}(\psi_j)U_{n-k,k}}^{Q_{n-k,k}}(\theta_{U_{n-k,k},\psi_j}\otimes
\theta'_{U_{n-k,k},\psi_j^{-1}}), \qquad 0\leq j\leq \min(n-k,k).
\end{align}
Note that $U_{n-k,k}$ acts trivially on \eqref{eq:filtration quotients} and we regard it as a representation in $\Alg{M_{n-k,k}}$. Also recall that
by Theorem~\ref{theorem:twisted Jacquet modules of theta along maximal parabolic},
\begin{align*}
&\theta_{U_{n-k,k},\psi_j}=\delta_{Q_{n-k-j,2j,k-j}}^{1/4}(\theta_{n-k-j,1,\gamma_1}\widetilde{\otimes}_{\gamma_{(j)}}\theta_{k-j,1,\gamma_2})\otimes(\theta_{2j,1,\gamma_3})_{U_{j,j},\psi_j},\\
&\theta'_{U_{n-k,k},\psi_j^{-1}}=\delta_{Q_{n-k-j,2j,k-j}}^{1/4}(\theta_{n-k-j,1,\gamma_1'}\widetilde{\otimes}_{\gamma'_{(j)}}\theta_{k-j,1,\gamma_2'})\otimes(\theta_{2j,1,\gamma_3'})_{U_{j,j},\psi_j^{-1}}.
\end{align*}
By Theorem~\ref{theorem:twisted Jacquet modules of theta along generic character in k=n-k case} and because $\gamma_{\psi}\gamma_{\psi^{-1}}=1$,
$(\theta_{2j,1,\gamma_3})_{U_{j,j},\psi_j}\otimes(\theta_{2j,1,\gamma_3'})_{U_{j,j},\psi_j^{-1}}$ is the trivial character of $\mathbb{GL}_j$.
Set $V=U_{n-k-j,j,j,k-j}$. Then
$\theta_{U_{n-k,k},\psi_j}\otimes\theta'_{U_{n-k,k},\psi_j^{-1}}\in\Alg{(L_{n,k,j}\ltimes V)}$ and is trivial on
$V$. For $g\in L_{n,k,j}$, let $\mathrm{mod}_V(g)$ be defined by
\begin{align*}
\int_{V}f(\rconj{g^{-1}}v)\ dv=\mathrm{mod}_V(g)\int_{V}f(v)\ dv.
\end{align*}
This is the modulus character from \cite{BZ2} (p.~444). Then $\mathrm{mod}_V(g)=\delta_{Q_{n-k-j,2j,k-j}}(g)$.
Therefore \eqref{eq:filtration quotients} becomes
\begin{align*}
\ind_{L_{n,k,j}V}^{Q_{n-k,k}}(\mathrm{mod}_V^{1/2}(\xi_{n,k,j}\otimes1)).
\end{align*}

Assumption~\eqref{eq:assumption dist 2 blocks} implies that for some $0\leq j\leq \min(n-k,k)$,
\begin{align*}
\Hom_{M_{n-k,k}}(\ind_{L_{n,k,j}V}^{Q_{n-k,k}}(\mathrm{mod}_V^{1/2}(\xi_{n,k,j}\otimes1)),\delta_{Q_{n-k,k}}^{1/2}\tau_1^{\vee}\otimes\tau_2^{\vee})\ne0.
\end{align*}
The \lhs\ equals
\begin{align}\nonumber
&\Hom_{M_{n-k,k}}(\tau_1\otimes\tau_2,\Ind_{L_{n,k,j}V}^{Q_{n-k,k}}(
\mathrm{mod}_V^{1/2}\delta_{Q_{n-k,k}}^{-1/2}(\xi_{n,k,j}\otimes1)^{\vee}))\\\nonumber&=
\Hom_{L_{n,k,j}}(\mathrm{mod}_V^{-1/2}\delta_{Q_{n-k,k}}^{1/2}(\tau_1\otimes\tau_2)_V,(\xi_{n,k,j}\otimes1)^{\vee})\\\nonumber&=
\Hom_{L_{n,k,j}}(\xi_{n,k,j}\otimes1,
(\delta_{Q_{n-k-j,j}}^{-1/2}(\tau_1)_{U_{n-k-j,j}}\otimes\delta_{Q_{j,k-j}}^{-1/2}(\tau_2)_{U_{j,k-j}})^{\vee}).\qedhere
\end{align}
\end{proof}

Our first objective is to describe how distinguished representations are constructed. In \cite{me9} we proved the following heredity result:
\begin{theorem}(\cite{me9} Theorem~3)\label{theorem:upper heredity of dist}
Let $\tau_1\otimes\tau_2\in\Alg{M_{n-k,k}}$. If $\tau_1$ and $\tau_2$ are distinguished,
$\tau_1\times\tau_2$ is distinguished. 
\end{theorem}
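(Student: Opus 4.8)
The plan is to deduce this from the machinery assembled above; this is \cite{me9} Theorem~3, but the present tools give a clean route. First I would apply Frobenius reciprocity for non-normalized parabolic induction, together with $(\tau_1\times\tau_2)^{\vee}\isomorphic\tau_1^{\vee}\times\tau_2^{\vee}$, to rewrite
\[
\Hom_{\GL_n}(\theta_{n,1,\gamma}\otimes\theta_{n,1,\gamma'},(\tau_1\times\tau_2)^{\vee})
\isomorphic\Hom_{M_{n-k,k}}((\theta\otimes\theta')_{U_{n-k,k}},\delta_{Q_{n-k,k}}^{1/2}\tau_1^{\vee}\otimes\tau_2^{\vee}),
\]
where the pseudo-characters $\gamma,\gamma'$ are at our disposal. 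Next I would observe that the outer tensor of the two Jacquet projections $\theta\to\theta_{U_{n-k,k}}$ and $\theta'\to\theta'_{U_{n-k,k}}$ kills $U_{n-k,k}$, hence factors through a surjection $(\theta\otimes\theta')_{U_{n-k,k}}\twoheadrightarrow\theta_{U_{n-k,k}}\otimes\theta'_{U_{n-k,k}}$; by \eqref{eq:result of Kable for Jacquet module of exceptional} the target is $\delta_{Q_{n-k,k}}^{1/2}(\theta_{n-k}\widetilde{\otimes}_{\gamma}\theta_k)\otimes(\theta'_{n-k}\widetilde{\otimes}_{\gamma'}\theta'_k)$, with the inner pseudo-characters arbitrary. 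Thus it suffices to produce, for some admissible choice of all the data, a nonzero $M_{n-k,k}$-map from $(\theta_{n-k}\widetilde{\otimes}_{\gamma}\theta_k)\otimes(\theta'_{n-k}\widetilde{\otimes}_{\gamma'}\theta'_k)$ to $\tau_1^{\vee}\otimes\tau_2^{\vee}$.

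I would then establish this last nonvanishing as a converse to Corollary~\ref{corollary:hom of tensor separates}. Using the description \eqref{eq:refinement for tensor} of the metaplectic tensor as a compact induction from $p^{-1}(\GL_{n-k}\times\GL_k^{\square})$ (or the variant exchanging the two blocks), the projection formula, and the normality of the $\square$-subgroups, I would expand the source as a finite direct sum of $\GL_{n-k}\times\GL_k^{\square}$-representations of the shape $(\theta_{n-k}\otimes\theta'_{n-k})\otimes(\sigma\otimes\rconj{g}\sigma')$, where $\sigma,\sigma'$ are irreducible summands of $\theta_k^{\square}$ and $g$ runs over $\GL_k^{\square}\backslash\GL_k$. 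A K\"unneth decomposition of $\Hom$ then reduces the problem to (a) $\Hom_{\GL_{n-k}}(\theta_{n-k}\otimes\theta'_{n-k},\tau_1^{\vee})\ne0$, which is exactly the hypothesis that $\tau_1$ is distinguished once $\gamma_1,\gamma_1'$ are taken to be the witnessing pseudo-characters; and (b) $\Hom_{\GL_k^{\square}}(\sigma\otimes\rconj{g}\sigma',\tau_2^{\vee})\ne0$ for some choice of $\sigma,\sigma',g$. For (b), restricting the distinguishedness of $\tau_2$ to $\GL_k^{\square}$ --- on which $\theta_k^{\square}$ is independent of the pseudo-character, since square-trivial characters of $\GL_k$ are trivial there --- yields summands $\sigma_0,\sigma_0'$ of $\theta_k^{\square}$ with $\Hom_{\GL_k^{\square}}(\sigma_0\otimes\sigma_0',\tau_2^{\vee})\ne0$; and $\GL_k/\GL_k^{\square}\isomorphic F^*/F^{*2}$ permutes the $[F^*:F^{*2}]$ summands of $\theta_k^{\square}$ simply transitively, so one can hope to arrange $\sigma$ and $\rconj{g}\sigma'$ to equal $\sigma_0,\sigma_0'$.

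The step I expect to be the main obstacle is precisely this matching of summands: in \eqref{eq:refinement for tensor} the summand $\sigma$ of $\theta_k^{\square}$ is free only when both $n-k$ and $k$ are even, and is otherwise pinned down by a central-character constraint. I would handle this by exploiting the remaining freedoms --- the pseudo-characters $\gamma,\gamma'$ of the ambient exceptional representations (which govern that constraint) together with the $g$-twists coming out of the Mackey expansion --- and, when $n-k$ or $k$ is odd, by invoking Claim~\ref{claim:eta tau is dist when tau is for odd n} (applied with $n$ replaced by $n-k$ or $k$) to absorb the residual square-trivial character twists, replacing $\tau_i$ by $\eta\tau_i$ harmlessly for the odd-dimensional blocks. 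Once (a) and (b) hold simultaneously for a consistent choice of data, reading the chain of maps backwards gives the desired nonzero element of $\Hom_{\GL_n}(\theta\otimes\theta',(\tau_1\times\tau_2)^{\vee})$, so $\tau_1\times\tau_2$ is distinguished. The complete bookkeeping of these square-trivial twists, and the verification that the available freedoms suffice to eliminate all of them, is carried out in \cite{me9}.
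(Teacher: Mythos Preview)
The paper does not prove this theorem here; it merely quotes it from \cite{me9}. So there is no in-paper proof to compare against. That said, your sketch is a reasonable outline of how such an argument goes, and it is compatible with the tools assembled in this paper.

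Your reduction is sound: Frobenius reciprocity gives the displayed isomorphism, and the surjection $(\theta\otimes\theta')_{U_{n-k,k}}\twoheadrightarrow\theta_{U_{n-k,k}}\otimes\theta'_{U_{n-k,k}}$ is genuine (it is the $j=0$ piece of the filtration in the proof of Lemma~\ref{lemma:repeated filtration argument for 2 blockes}, and it sits as a quotient because $\theta_U$ is a quotient of $\theta$). Combined with \eqref{eq:result of Kable for Jacquet module of exceptional}, this reduces the problem to producing a nonzero $M_{n-k,k}$-map out of a tensor of two metaplectic tensors, i.e., a converse to Corollary~\ref{corollary:hom of tensor separates}.

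Where you are honest about the difficulty is exactly where the work lies. The Mackey/projection-formula expansion you describe is correct in shape, but the constraint on the summand $\sigma$ in \eqref{eq:refinement for tensor} when $n-k$ or $k$ is odd is real, and the interaction between the conjugation twists $\chi_g$ on the two blocks (coming from \eqref{eq:block-compatibility}) must be tracked simultaneously for both metaplectic tensors. You do not carry this out; you defer it to \cite{me9}. That is acceptable as a citation, but it means your text is a proof \emph{sketch} pointing to \cite{me9}, not an independent proof. As a sketch it is correct and captures the mechanism; just be aware that ``one can hope to arrange'' is precisely the step that requires the case analysis you have not written.
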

This does not exhaust all distinguished induced representations, as can already be observed in the case of $n=2$ and principal
series representations: if $\tau$ is any character of $F^*$, $\tau\times\tau^{-1}$ is distinguished. The proof is contained in \cite{me9} (Claim~4.4),
but was only reproduced from the arguments of Savin \cite{Savin3}. We extend this result to the following theorem.
\begin{theorem}\label{theorem:tau otimes tau dual is dist}
Let $\tau\in\Alg_{\mathrm{irr}}{\GL_k}$. Then $\tau\times\tau^{\vee}(=\Ind_{Q_{k,k}}^{\GL_{2k}}(\delta_{Q_{k,k}}^{1/2}\tau\otimes\tau^{\vee}))$ is a distinguished representation of $\GL_{2k}$.
\end{theorem}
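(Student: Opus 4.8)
The goal is to show $\Hom_{\GL_{2k}}(\theta\otimes\theta', (\tau\times\tau^{\vee})^{\vee})\ne0$. By Frobenius reciprocity (or second adjunction) applied to the induced representation $\tau\times\tau^{\vee}$, together with the fact that $((\tau\times\tau^\vee)^\vee) = \tau^\vee\times\tau$ up to the usual identification, this $\Hom$ space is computed by the Jacquet module of $\theta\otimes\theta'$ along $U_{k,k}$: explicitly, it suffices to produce a nonzero element of
\begin{align*}
\Hom_{M_{k,k}}((\theta\otimes\theta')_{U_{k,k}}, \delta_{Q_{k,k}}^{1/2}\tau^\vee\otimes\tau).
\end{align*}
This is exactly the setup of Lemma~\ref{lemma:repeated filtration argument for 2 blockes} with $n=2k$, $\tau_1=\tau^\vee$, $\tau_2=\tau$. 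So the first step is to invoke that lemma and reduce to finding some $0\le j\le k$ for which
\begin{align*}
\Hom_{L_{2k,k,j}}\!\left(\xi_{2k,k,j}\otimes 1,\ \bigl(\delta_{Q_{k-j,j}}^{-1/2}(\tau^\vee)_{U_{k-j,j}}\otimes\delta_{Q_{j,k-j}}^{-1/2}(\tau)_{U_{j,k-j}}\bigr)^\vee\right)\ne0.
\end{align*}

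\textbf{Choice of the orbit.} The natural and essentially only workable choice is $j=k$, i.e.\ the generic (Shalika) character $\psi_k$. For $j=k$ we have $n-k-j=0$ and $k-j=0$, so $L_{2k,k,k}=\mathbb{GL}_k$, the group $\xi_{2k,k,k}$ degenerates to the trivial representation of the trivial group $p^{-1}(\GL_0\times\GL_0)$, and the Jacquet modules $(\tau^\vee)_{U_{k-j,j}}$, $(\tau)_{U_{j,k-j}}$ are just $\tau^\vee$ and $\tau$ themselves (no unipotent to quotient by). Tracking the modulus characters, the $j=k$ term of the filtration of $(\theta\otimes\theta')_{U_{k,k}}$ is, by Theorem~\ref{theorem:twisted Jacquet modules of theta along generic character in k=n-k case} applied to each of $\theta$ and $\theta'$ and the identity $\gamma_{\psi,(-1)^{k-1}}\gamma_{\psi^{-1},(-1)^{k-1}}=1$, simply the trivial character of $\mathbb{GL}_k$ (with the correct power of $\delta_{Q_{k,k}}$). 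Hence what must be shown is
\begin{align*}
\Hom_{\GL_k}(\mathbbm{1},\ (\tau\otimes\tau^\vee)^\vee\text{-type pairing}) = \Hom_{\GL_k}(\tau\otimes\tau^\vee\ \text{restricted to }\mathbb{GL}_k,\ \mathbbm{1})\ne0,
\end{align*}
which is nothing but the statement that the diagonal $\GL_k$-invariant bilinear form on $\tau\otimes\tau^\vee$ — the canonical pairing — is nonzero. That is automatic.

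\textbf{Main obstacle.} The subtle point is not the $j=k$ term in isolation but the fact that $(\theta\otimes\theta')_{U_{k,k}}$ is only \emph{glued} from the quotients \eqref{eq:filtration quotients}, and a nonzero $\Hom$ out of a subquotient does not by itself give a nonzero $\Hom$ out of the whole module: one needs the relevant piece to survive as a quotient, or to argue via exactness. Concretely, in the filtration the $j=k$ piece $\ind_{\mathbb{GL}_k U_{k,k}}^{Q_{k,k}}(\mathbbm 1)$ sits at one end; if it is the \emph{top} quotient then a surjection $(\theta\otimes\theta')_{U_{k,k}}\twoheadrightarrow \delta^{1/2}\tau^\vee\otimes\tau$ can be built directly, and if it is the bottom one uses that $\Hom_{M_{k,k}}(-,\delta^{1/2}\tau^\vee\otimes\tau)$ is left exact together with the vanishing (or irrelevance) of the contributions of the lower-$j$ terms — here one may use that the metaplectic-tensor factors $\xi_{2k,k,j}\otimes 1$ for $j<k$ carry a nontrivial central or $\mathbb{GL}_j$-action coming from the Weil factor, which cannot match the trivial action forced on $\tau\otimes\tau^\vee$ by the diagonal embedding, so those $\Hom$ spaces vanish and the long exact sequence collapses. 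I would handle this by first checking the ordering of the Bernstein–Zelevinsky filtration (the generic orbit is closed, hence gives a \emph{sub}, so its contragredient gives a quotient of the dual filtration — which is precisely the direction we want), and then verifying that the canonical pairing on $\tau\otimes\tau^\vee$ furnishes the required nonzero functional. The reduction to $j=k$ and the explicit evaluation of the $j=k$ Jacquet module via Theorem~\ref{theorem:twisted Jacquet modules of theta along generic character in k=n-k case} (using the metaplectic Shalika model of $\theta$ from Remark~\ref{remark:about metaplectic Shalika model}) are the heart of the matter; once the filtration ordering is pinned down everything else is formal.
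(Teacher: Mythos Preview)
Your reduction via Frobenius reciprocity to
\[
\Hom_{M_{k,k}}\bigl((\theta\otimes\theta')_{U_{k,k}},\ \delta_{Q_{k,k}}^{1/2}\tau^{\vee}\otimes\tau\bigr)\ne0
\]
is correct, and you have correctly identified that the $j=k$ piece of the filtration is $\ind_{\mathbb{GL}_kU}^{Q}(\mathbbm 1)\cong\mathcal{S}(\GL_k)$, which admits the obvious nonzero map to $\delta^{1/2}\tau^{\vee}\otimes\tau$. But the argument breaks down precisely at the ``main obstacle'' you flagged, and your proposed resolution does not work.

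First, the generic orbit $\mathcal{O}(\psi_k)$ is the \emph{open} orbit in $\widehat{U}_{k,k}\cong F^{k\times k}$ (it is the locus of invertible matrices), not the closed one. Consequently the $j=k$ piece sits at the \emph{bottom} of the filtration: it is a \emph{sub} of $(\theta\otimes\theta')_{U_{k,k}}$, not a quotient. The paper makes this explicit: $i(\psi)\otimes i(\psi^{-1})\subset\theta\otimes\theta'$ in $\Alg{Q}$, and Claim~\ref{claim:embedding of tensor theta in space of bounded functions} shows its image in $(\theta\otimes\theta')_U$ is $\mathcal{S}(\GL_k)$. So you need to \emph{extend} your functional from a sub, and left-exactness of $\Hom(-,X)$ goes exactly the wrong way for that. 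Your fallback suggestion, that the $j<k$ pieces carry a nontrivial $\mathbb{GL}_j$-action from the Weil factor, is also incorrect: the whole point of pairing $\theta$ with $\psi$ and $\theta'$ with $\psi^{-1}$ is that $\gamma_{\psi,(-1)^{j-1}}\gamma_{\psi^{-1},(-1)^{j-1}}=1$, so the $\mathbb{GL}_j$-action on every piece is trivial --- this is exactly the ``$\otimes 1$'' in Lemma~\ref{lemma:repeated filtration argument for 2 blockes}. And even if the $\Hom$ from the $j<k$ pieces vanished, you would need $\mathrm{Ext}^1$ vanishing to guarantee extension, which you have not established (and which fails in general: for suitable $\tau$ the $j<k$ pieces \emph{do} contribute).

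The paper's proof is genuinely different and circumvents this obstruction by an analytic argument. Using the embedding $\theta\subset I(\psi)$ (the full, not compact, Shalika induction), one gets a map $L:(\theta\otimes\theta')_U\to C_b^{\infty}(\GL_k)$ extending the isomorphism $\mathcal{S}(\GL_k)\cong(i(\psi)\otimes i(\psi^{-1}))_U$. Integrating $L(\varphi\otimes\varphi')$ against a matrix coefficient $h$ of $\tau$ times $\nu^{s-k/2}$ converges for $\Re(s)\gg0$ (bounded by a Godement--Jacquet zeta integral), giving an element of $\Hom_M((\theta\otimes\theta')_U,\delta^{1/2}\nu^{-s}\tau^{\vee}\otimes\nu^s\tau)$ there. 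One then proves this $\Hom$ space is at most one-dimensional for generic $q^{-s}$ (this is where Lemma~\ref{lemma:repeated filtration argument for 2 blockes} is actually used, in the guise of Claim~\ref{claim:homspace at most one dimensional for a pair}), invokes Bernstein's continuation principle to meromorphically continue, and takes the leading coefficient at $s=0$. The analytic continuation is precisely what bridges the gap between the sub $\mathcal{S}(\GL_k)$, where the integral is entire, and the full module $(\theta\otimes\theta')_U$, where naive integration diverges at $s=0$.
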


The theorem essentially follows by exhibiting a functional in
\begin{align*}
\Hom_{M_{k,k}}((\theta\otimes\theta')_{U_{k,k}},\delta_{Q_{k,k}}^{1/2}\tau^{\vee}\otimes\tau).
\end{align*}
We will use the metaplectic Shalika model of $\theta$ obtained in
Theorem~\ref{theorem:twisted Jacquet modules of theta along generic character in k=n-k case}, 
to write an element of $(\theta\otimes\theta')_{U_{k,k}}$ as a locally constant function on $\GL_{2k}$, bounded by a Schwartz-Bruhat function.
The integral of this function against a matrix coefficient of $\tau$ will be absolutely convergent in some right half-plane, since it
can be bounded using the well-known zeta integral of Godement and Jacquet \cite{GJ}. Then one can obtain meromorphic continuation using
Bernstein's continuation principal (in \cite{Banks}).

\begin{proof}[Proof of Theorem~\ref{theorem:tau otimes tau dual is dist}]
Put $Q=Q_{k,k}$, $M=M_{k,k}$, $U=U_{k,k}$ and $G=\GL_k$.
Denote by $C^{\infty}_b(G)$ the space of complex-valued locally constant functions $f$ on $G$ that are bounded by a positive Schwartz-Bruhat function in $\mathcal{S}(F^{k\times k})$, that is,
there exists $\phi\in\mathcal{S}(F^{k\times k})$ such that $|f(g)|\leq \phi(g)$ for all $g$. The group $M$ acts on this space
by $\mathrm{diag}(a,b)f(x)=f(b^{-1}xa)$. Let $\theta=\theta_{2k,1,\gamma}$ and $\theta'=\theta_{2k,1,\gamma'}$ (the pair $\gamma$ and $\gamma'$ does not matter).
By Frobenius reciprocity, we need to prove
\begin{align}\label{claim:homspace need to prove}
\Hom_{M}((\theta\otimes\theta')_{U},\delta_{Q}^{1/2}\tau^{\vee}\otimes\tau)\ne0.
\end{align}

Denote $I(\psi)=\Ind_{\widetilde{\mathbb{G}}U}^{\widetilde{GL}_{2k}}(\gamma_{\psi,(-1)^{k-1}}\otimes\psi_k)$ and $i(\psi)=\ind_{\widetilde{\mathbb{G}}U}^{\widetilde{Q}}(\gamma_{\psi,(-1)^{k-1}}\otimes\psi_k)$, where $\mathbb{G}$ is the diagonal embedding of $G$ in $M$.
According to Theorem~\ref{theorem:twisted Jacquet modules of theta along generic character in k=n-k case} (see Remark~\ref{remark:about metaplectic Shalika model}), $\theta\subset I(\psi)$. Additionally $\theta(U)\subset\theta$ in $\Alg{\widetilde{Q}}$ and since $\ind_{\mathbb{\widetilde{\mathbb{G}}}U}^{\widetilde{Q}}(\theta_{U,\psi_k})\subset\theta(U)$, the theorem
also implies $i(\psi)\subset \theta$ (in $\Alg{\widetilde{Q}}$). When we apply
Theorem~\ref{theorem:twisted Jacquet modules of theta along generic character in k=n-k case} to $\theta'$ using $\psi^{-1}$, we obtain $i(\psi^{-1})\subset \theta'\subset I(\psi^{-1})$.
First we claim:
\begin{claim}\label{claim:embedding of tensor theta in space of bounded functions}
There exists $L\in \Hom_{M}((\theta\otimes\theta')_U,C^{\infty}_b(G))$, which restricts to
an isomorphism $(i(\psi)\otimes i(\psi^{-1}))_U\isomorphic\mathcal{S}(G)$ in $\Alg{M}$.
\end{claim}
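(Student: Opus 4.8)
The plan is to construct the map $L$ explicitly using the metaplectic Shalika models of $\theta$ and $\theta'$, and then to verify that it has the desired boundedness and surjectivity properties. First I would fix, via Theorem~\ref{theorem:twisted Jacquet modules of theta along generic character in k=n-k case}, embeddings $\theta\subset I(\psi)$ and $\theta'\subset I(\psi^{-1})$; thus a vector $\varphi$ in the space of $\theta$ is realized as a function $W_\varphi$ on $\widetilde{\GL}_{2k}$ transforming on the left under $\widetilde{\mathbb{G}}U$ by $\gamma_{\psi,(-1)^{k-1}}\otimes\psi_k$, and similarly $\varphi'\mapsto W'_{\varphi'}$ for $\theta'$. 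For a pure tensor $\varphi\otimes\varphi'$ I would set
\begin{align*}
L(\varphi\otimes\varphi')(g)=\int_{\mathbb{G}\backslash\widetilde{\mathbb{G}}^{(1)}}W_\varphi(\mathrm{diag}(g,1)\,h)\,W'_{\varphi'}(\mathrm{diag}(g,1)\,h)\,dh,\qquad g\in\GL_k,
\end{align*}
i.e. integrate the product of the two Whittaker–Shalika functions over the anisotropic part; since the cover splits over $\mathbb{G}$ and the two genuine characters $\gamma_{\psi,(-1)^{k-1}}$ and its inverse cancel, the integrand descends to a function on $\mathbb{G}U\backslash\GL_{2k}$ and the integral over the remaining (compact modulo $\mathbb{G}U$, because $\mathbb{GL}_k$ is reductive) directions is a finite one, so $L(\varphi\otimes\varphi')$ is a well-defined locally constant function on $G$. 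Actually, because $\mathbb{GL}_kU$ is its own normalizer with reductive quotient only on the $\mathbb{G}$ factor, it is cleaner to view $L(\varphi\otimes\varphi')(g)=W_\varphi W'_{\varphi'}$ evaluated along the torus-like slice $\mathrm{diag}(g,I_k)$; I would adopt whichever normalization makes the $M$-equivariance $L(\mathrm{diag}(a,b)(\varphi\otimes\varphi'))(x)=L(\varphi\otimes\varphi')(b^{-1}xa)$ transparent, which follows directly from the left transformation law of $W_\varphi,W'_{\varphi'}$ under $U$ and the bi-invariance structure.

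Next I would establish the boundedness $L(\varphi\otimes\varphi')\in C^\infty_b(G)$. The key input is that exceptional representations, being genuine smooth admissible representations, have Whittaker–Shalika functions that are controlled near infinity the same way matrix coefficients of admissible representations are: each $W_\varphi$ is supported, in the $\mathrm{diag}(g,I_k)$ variable, on a set of the form $\{g:\text{ entries of }g,g^{-1}\text{ bounded}\}$ times a compact set — more precisely its support in $g$ is bounded by a Schwartz–Bruhat function on $F^{k\times k}$. This is the exact analogue of the estimate underlying the absolute convergence of the Godement–Jacquet zeta integral $\int W_\varphi(\mathrm{diag}(g,I_k))\langle \tau(g)u,\tilde u\rangle |\det g|^{s}\,dg$ cited right after the claim; I would quote or reprove the Jacquet-module/derivative argument that gives such a bound for $\theta$ (it follows from the computation of $(\theta)_{U_{k,k},\psi_k}$ being finite-dimensional together with Casselman-type asymptotics, or more elementarily from the fact that $\theta$ embeds in a principal series and the Shalika functional is obtained by an absolutely convergent Jacquet integral). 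Multiplying the two bounds gives $|L(\varphi\otimes\varphi')(g)|\le\phi(g)$ for a suitable $\phi\in\mathcal S(F^{k\times k})$, and local constancy is automatic from smoothness. This dominance step, extended from pure tensors to general elements of $(\theta\otimes\theta')_U$ by writing them as finite sums and using that $C^\infty_b(G)$ is closed under sums, gives $L\in\Hom_M((\theta\otimes\theta')_U,C^\infty_b(G))$.

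Finally I would verify that $L$ restricts to an isomorphism $(i(\psi)\otimes i(\psi^{-1}))_U\cong\mathcal S(G)$. Here $i(\psi)=\ind_{\widetilde{\mathbb G}U}^{\widetilde Q}(\gamma_{\psi,(-1)^{k-1}}\otimes\psi_k)$ is compact induction, so an element of $i(\psi)$ is a function on $\widetilde Q$ with left $\widetilde{\mathbb G}U$-transformation and compact support modulo $\mathbb GU$; applying Lemma~\ref{lemma:nontwisted Jacquet of tensor of two is one} (with $Q=Q_{k,k}$, $U=U_{k,k}$, and the two characters $\psi_k,\psi_k^{-1}$, which are inverse to each other, stabilizer $\mathbb G$) identifies $(i(\psi)\otimes i(\psi^{-1}))_U$ with $\ind_{\mathbb GU}^{Q}(\text{trivial})$, i.e. precisely the space of compactly-supported-mod-$\mathbb GU$ functions on $Q$, and evaluation along $g\mapsto\mathrm{diag}(g,I_k)$ identifies this with $\mathcal S(G)$ as an $M$-module (the action $\mathrm{diag}(a,b):f(x)\mapsto f(b^{-1}xa)$ matches the $Q$-translation). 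Under these identifications $L$ becomes literally this evaluation map, hence an isomorphism onto $\mathcal S(G)$. The main obstacle I expect is the boundedness estimate: one must show that the Shalika (not Whittaker) function of the \emph{large} representation $\theta$ still satisfies a Schwartz-type majorization in the $\mathrm{diag}(g,I_k)$ variable — the one-dimensionality of $\theta_{U_{k,k},\psi_k}$ from Theorem~\ref{theorem:twisted Jacquet modules of theta along generic character in k=n-k case} guarantees the model exists and is unique but does not by itself give decay, so I would need to feed in the structure of $\theta$ as a quotient of a principal series (and hence finitely generated, with Jacquet modules of finite length along every parabolic) to run a Casselman–Wallach-style asymptotic argument, or alternatively realize the Shalika integral as a literally convergent integral over a unipotent subgroup and read off the support bound from that.
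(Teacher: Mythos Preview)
Your approach is essentially the paper's: define $L(f\otimes f')(x)=f(\mathrm{diag}(x,I_k))\,f'(\mathrm{diag}(x,I_k))$ (which is where you land after abandoning the unnecessary integral formula), check $M$-equivariance via $\gamma_{\psi}\gamma_{\psi^{-1}}=1$, and identify the restriction to $i(\psi)\otimes i(\psi^{-1})$ with $\mathcal{S}(G)$ using Lemma~\ref{lemma:nontwisted Jacquet of tensor of two is one}.

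Where you diverge is in the boundedness step, which you flag as the main obstacle and propose to attack with Casselman asymptotics or the principal-series structure of $\theta$. This is an overcomplication. The paper's point is that the Schwartz--Bruhat bound holds for \emph{every} $f\in I(\psi)$, not just those coming from $\theta$: it is a statement about the induced space itself, proved exactly as in Friedberg--Jacquet \cite{FJ} (Lemma~3.1) for the ordinary Shalika model. Concretely, smoothness of $f$ under right translation by $U$ gives $f(\mathrm{diag}(x,I_k))=\psi_k(xu_0)f(\mathrm{diag}(x,I_k))$ for all $u_0$ in some open compact $\mathcal{U}\subset F^{k\times k}$, forcing $f(\mathrm{diag}(x,I_k))=0$ unless $\psi_k(x\cdot)$ is trivial on $\mathcal{U}$, i.e.\ unless $x$ lies in a fixed lattice. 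No finer structure of $\theta$ is required.

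One small omission: you should verify that $L$ factors through the $U$-coinvariants before writing $L\in\Hom_M((\theta\otimes\theta')_U,C^\infty_b(G))$. The paper checks this directly from Lemma~\ref{lemma:Jacquet kernel as integral}, using the same transformation $f(\mathrm{diag}(x,I_k)u)=\psi_k(xu)f(\mathrm{diag}(x,I_k))$ and $\psi_k\cdot\psi_k^{-1}=1$.
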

Now let $\varphi\otimes\varphi'$ belong to the space of $\theta\otimes\theta'$. Also let $h$ be a matrix coefficient of $\tau$. For $s\in\C$, consider the integral
\begin{align}\label{int:formal integral}
\int_{G}L(\varphi\otimes\varphi')(g)h(g)\nu^{s-k/2}(g)\ dg.
\end{align}
Since $L(\varphi\otimes\varphi')\in C^{\infty}_b(G)$, there is some $s_0\in\R$ depending only on $\tau$, such that \eqref{int:formal integral}
is absolutely convergent for all $s$ with $\Re{(s)}>s_0$. This follows immediately from the convergence properties of the zeta integral
$\int_{G}\phi(g)h(g)\nu^s(g)dg$ of Godement and Jacquet (\cite{GJ} p.~30). In this right half-plane \eqref{int:formal integral} defines an element in
\begin{align}\label{homspace:for s}
\Hom_{M}((\theta\otimes\theta')_U,\delta_{Q}^{1/2}\nu^{-s}\tau^{\vee}\otimes\nu^{s}\tau).
\end{align}

Also since $L:(i(\psi)\otimes i(\psi^{-1}))_U\rightarrow\mathcal{S}(G)$ is an isomorphism, we can choose data $(\varphi,\varphi',h)$ such that
\eqref{int:formal integral} is absolutely convergent and equals $1$, for all $s$. In order to deduce meromorphic continuation, we also need to show that
\eqref{homspace:for s} is at most one-dimensional, except for a finite set of values of $q^{-s}$, where $q$ is the order of the residue field of $F$.
In fact, we prove the following more general statement.
\begin{claim}\label{claim:homspace at most one dimensional for a pair}
Let $\tau_1\otimes\tau_2\in\Alg_{\mathrm{irr}}{M_{n-k,k}}$. Outside of a finite set of values of $q^{-s}$,
\begin{align*}
\Hom_{M_{n-k,k}}((\theta\otimes\theta')_{U_{n-k,k}},\delta_{Q_{n-k,k}}^{1/2}\nu^{-s}\tau_1\otimes\nu^{s}\tau_2)
\end{align*}
is at most one-dimensional.
\end{claim}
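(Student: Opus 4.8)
The plan is to reduce the Hom-space in Claim~\ref{claim:homspace at most one dimensional for a pair} to Hom-spaces over smaller groups by means of Lemma~\ref{lemma:repeated filtration argument for 2 blockes}, and then to induct on $n$. First I would apply Lemma~\ref{lemma:repeated filtration argument for 2 blockes} with $\tau_1$ replaced by $\nu^{-s}\tau_1$ and $\tau_2$ by $\nu^{s}\tau_2$: since $(\theta\otimes\theta')_{U_{n-k,k}}$ is glued from the quotients $\ind_{L_{n,k,j}V}^{Q_{n-k,k}}(\mathrm{mod}_V^{1/2}(\xi_{n,k,j}\otimes1))$ for $0\leq j\leq\min(n-k,k)$, any nonzero element of the Hom-space factors through at least one such quotient, and by Frobenius reciprocity each contributes
\begin{align*}
\Hom_{L_{n,k,j}}\bigl(\xi_{n,k,j}\otimes1,\ (\delta_{Q_{n-k-j,j}}^{-1/2}\nu^{s}(\tau_1)_{U_{n-k-j,j}}\otimes\delta_{Q_{j,k-j}}^{-1/2}\nu^{-s}(\tau_2)_{U_{j,k-j}})^{\vee}\bigr).
\end{align*}
So it suffices to bound each of these finitely many spaces, for $s$ outside a finite set of values of $q^{-s}$.

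For the term $j=0$ this is exactly the statement that $\Hom_{M_{n-k,k}}((\theta_{n-k}\widetilde{\otimes}\theta_k)\otimes(\theta'_{n-k}\widetilde{\otimes}\theta'_k),\delta^{1/2}\nu^{-s}\tau_1^{\vee}\otimes\nu^s\tau_2^{\vee})$ is at most one-dimensional; via Corollary~\ref{corollary:hom of tensor separates} this separates into a Hom-space for $\GL_{n-k}$ and one for $\GL_k$, each of which, after passing to irreducible subquotients of the (finite-length) Jacquet modules, is governed by the supercuspidal case handled in \cite{me10} — there the relevant uniqueness is known, and the dependence on $s$ is polynomial so it fails only for finitely many $q^{-s}$. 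For the terms $j>0$ I would argue that they vanish identically (for generic $s$, or even all $s$): the left factor $\mathbb{GL}_j$ acts on $\xi_{n,k,j}$ by the \emph{trivial} character $1$, while on the target it acts through the Jacquet modules $(\tau_i)_{U_{n-k-j,j}}$, $(\tau_j)_{U_{j,k-j}}$ twisted by $\nu^{\pm s}$ and by the $\delta$-factors; comparing central characters of $\mathbb{GL}_j$ on the two sides produces a nontrivial constraint unless $s$ lies in a finite set, forcing the Hom-space to vanish outside that set. Adding the $j=0$ contribution, the total is at most one-dimensional generically.

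The main obstacle I anticipate is the bookkeeping for the $j>0$ terms: one must verify that the central-character (or more generally the $\mathbb{GL}_j$-equivariance) matching genuinely forces vanishing, rather than merely one-dimensionality, and this requires tracking the modulus characters $\delta_{Q_{n-k-j,j}}^{-1/2}$, $\delta_{Q_{j,k-j}}^{-1/2}$ together with $\mathrm{mod}_V$ precisely on the diagonal torus of $\mathbb{GL}_j$. A secondary subtlety is that $\tau_1,\tau_2$ need not be square-integrable, so their Jacquet modules are only finite-length, not irreducible; one handles this by taking a composition series and noting that the (finite) exceptional set of $q^{-s}$ for each constituent can be combined. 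Everything else — the gluing, Frobenius reciprocity, the reduction via Corollary~\ref{corollary:hom of tensor separates} and the appeal to the supercuspidal uniqueness of \cite{me10} — is routine.
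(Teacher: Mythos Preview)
Your overall framework (filter $(\theta\otimes\theta')_{U_{n-k,k}}$ by the pieces indexed by $0\leq j\leq\min(n-k,k)$ and bound each Hom via Lemma~\ref{lemma:repeated filtration argument for 2 blockes}) is exactly the paper's, but the mechanism you propose for the individual pieces is wrong in a way that breaks the argument.

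\textbf{The $j>0$ pieces.} You want to force vanishing by comparing the action of $\mathbb{GL}_j$ on the two sides and reading off an $s$-dependent central-character constraint. This does not work: the diagonal copy $\mathbb{GL}_j$ sees the twist $\nu^{s}$ from the $\GL_j$-factor of $(\tau_1^{\vee})_{U_{n-k-j,j}}$ and the twist $\nu^{-s}$ from the $\GL_j$-factor of $(\tau_2^{\vee})_{U_{j,k-j}}$, and these cancel. Concretely, with irreducible subquotients $\varrho_1\otimes\varrho_2$ and $\varrho_3\otimes\varrho_4$ as in the paper, the $\mathbb{GL}_j$-condition is
\[
\Hom_{\GL_j}(\varrho_2,\varrho_3^{\vee}),
\]
which is independent of $s$. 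So no finite exceptional set arises from $\mathbb{GL}_j$. Worse, when $n=2k$ and $j=k$ this space equals $\Hom_{\GL_k}(\tau_1^{\vee},\tau_2)$, which is nonzero for \emph{all} $s$ as soon as $\tau_2\cong\tau_1^{\vee}$; your claim that the $j>0$ pieces ``vanish identically'' is therefore false exactly in the case that matters for Theorem~\ref{theorem:tau otimes tau dual is dist}.

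The $s$-dependence lives on the \emph{other} factor, $\GL_{n-k-j}\times\GL_{k-j}$, where $\xi_{n,k,j}$ sits. There Corollary~\ref{corollary:hom of tensor separates} says that nonvanishing forces $\nu^{s}\varrho_1$ and $\nu^{-s}\varrho_4$ to be distinguished (up to a square-trivial twist), hence their central characters squared are trivial; this is a genuine polynomial condition in $q^{-s}$ and rules out all but finitely many values, provided $n-k-j>0$ or $k-j>0$, i.e.\ provided we are not in the case $j=k=n-k$. This is what the paper does.

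\textbf{The $j=0$ piece.} Your plan here is both more complicated than needed and has its own gap. Corollary~\ref{corollary:hom of tensor separates} is a one-way implication (nonzero $\Rightarrow$ factors nonzero); it gives no upper bound on the dimension of the metaplectic-tensor Hom in terms of the factor Homs, so you cannot deduce ``$\leq 1$'' from uniqueness on each factor. Nor does \cite{me10} supply the supercuspidal multiplicity-one statement you invoke. Fortunately none of this is necessary: $j=0$ satisfies $j<k$ and $j<n-k$, so the same Corollary~\ref{corollary:hom of tensor separates} argument as above (applied to $\nu^{s}\tau_1^{\vee}$ and $\nu^{-s}\tau_2^{\vee}$ themselves) already shows this piece vanishes for $q^{-s}$ outside a finite set.

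\textbf{What actually happens.} All pieces with $j<k$ or $j<n-k$ vanish generically in $s$; the only possible survivor is $j=k=n-k$ (so $n=2k$), where $\xi_{n,k,j}$ is trivial and the Hom reduces to $\Hom_{\GL_k}(\tau_1^{\vee},\tau_2)$, which is at most one-dimensional by Schur since $\tau_1,\tau_2$ are irreducible. That is the paper's proof.
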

Now according to Bernstein's principle of meromorphic continuation and rationality (in \cite{Banks}),
the integral~\eqref{int:formal integral} has a meromorphic continuation to a function $\Lambda(\varphi\otimes\varphi',h,s)$ in $\C(q^{-s})$, satisfying the same equivariance properties
with respect to $M$, and not identically zero at $s=0$. Assume $\Lambda(\cdot,\cdot,s)$ has a pole at $s=0$, of order $r\geq0$, then
$\lim_{s\rightarrow 0}s^r\Lambda(\cdot,\cdot,s)$ is finite and nonzero. Now \eqref{claim:homspace need to prove} follows and the theorem is proved.

\begin{proof}[Proof of Claim~\ref{claim:embedding of tensor theta in space of bounded functions}]
Regard $G(=\GL_k)$ as a subgroup of $\GL_{2k}$ via the embedding $\ell(x)=\mathrm{diag}(x,I_k)$.
In general, any $f\in I(\psi)$ defines a locally constant function on $\widetilde{G}$ by restriction. According to Lemma~3.1 of Friedberg and Jacquet \cite{FJ}, which can easily be verified in our setting, this function is bounded by a positive Schwartz-Bruhat function in $\mathcal{S}(F^{k\times k})$ (the more general statement in \cite{FJ} also holds, but will not be needed here).

Define a mapping $L:I(\psi)\otimes I(\psi^{-1})\rightarrow C^{\infty}_b(G)$
by
\begin{align*}
L(f\otimes f')(x)=f(\ell(x))f'(\ell(x)).
\end{align*}
Since
$\gamma_{\psi}\gamma_{\psi^{-1}}=1$, we have
$f(\mathrm{diag}(x,x)g)f'(\mathrm{diag}(x,x)g)=f(g)f'(g)$. Hence $L$ intertwines the action of $M$ and belongs to
\begin{align*}
\Hom_{M}(I(\psi)\otimes I(\psi^{-1}),C^{\infty}_b(G)).
\end{align*}
Observe that $L$ factors through $(I(\psi)\otimes I(\psi^{-1}))_U$. Indeed if $\sum_a f_a\otimes f'_a$ lies in the space of $(I(\psi)\otimes I(\psi^{-1}))(U)$, 
Lemma~\ref{lemma:Jacquet kernel as integral} implies in particular that for some compact $\mathcal{U}<U$,
\begin{align*}
(\sum_a f_a\otimes f'_a)(\ell(x))=\int_{\mathcal{U}}(\sum_a f_a\otimes f'_a)(\ell(x)u)\ du=0,\qquad\forall x\in G.
\end{align*}
Here we used the fact that for $x_0\in\widetilde{G}$ such that $p(x_0)=\mathrm{diag}(x,I_k)=\ell(x)$ ($p$ - the projection $\widetilde{\GL}_{2k}\rightarrow\GL_{2k}$, see Section~\ref{subsection:the groups}), $f_a(x_0\left(\begin{smallmatrix}I_k&u\\&I_k\end{smallmatrix}\right))=\psi_k(xu)f_a(x_0)$, and assumed $\int_{\mathcal{U}}du=1$.

Moreover, Lemma~\ref{lemma:nontwisted Jacquet of tensor of two is one} and its proof show that
$(i(\psi)\otimes i(\psi^{-1}))_U\isomorphic\ind_{\mathbb{G}U}^{Q}(1)\isomorphic\mathcal{S}(G)$ and the restriction
of $L$ to $i(\psi)\otimes i(\psi^{-1})\subset I(\psi)\otimes I(\psi^{-1})$ defines an isomorphism in
\begin{align*}
\Hom_{M}((i(\psi)\otimes i(\psi^{-1}))_U,\mathcal{S}(G)).
\end{align*}
The claim follows because 
$\theta\otimes\theta'\subset I(\psi)\otimes I(\psi^{-1})$.
\end{proof}

\begin{proof}[Proof of Claim~\ref{claim:homspace at most one dimensional for a pair}]
By virtue of Lemma~\ref{lemma:repeated filtration argument for 2 blockes} and using the same notation, it is enough to prove that for all
$0\leq j<\min(n-k,k)$, except for a finite set of $q^{-s}$,
\begin{align*}
\Hom_{L_{n,k,j}}(\xi_{n,k,j}\otimes1,
(\nu^{s}\delta_{Q_{n-k-j,j}}^{-1/2}(\tau_1^{\vee})_{U_{n-k-j,j}}\otimes\nu^{-s}\delta_{Q_{j,k-j}}^{-1/2}(\tau_2^{\vee})_{U_{j,k-j}})^{\vee})=0,
\end{align*}
and for $j=\min(n-k,k)$ this space is at most one-dimensional.

We show that this holds for each pair of irreducible subquotients $\varrho_1\otimes\varrho_2$ of $\delta_{Q_{n-k-j,j}}^{-1/2}(\tau_1^{\vee})_{U_{n-k-j,j}}$
and $\varrho_3\otimes\varrho_4$ of $\delta_{Q_{j,k-j}}^{-1/2}(\tau_2^{\vee})_{U_{j,k-j}}$. For these subquotients, this space vanishes unless both of the following spaces are nonzero:
\begin{align}\label{space:homspace xi 1}
&\Hom_{\GL_{n-k-j}\times\GL_{k-j}}(\xi_{n,k,j},\nu^{-s}\varrho_1^{\vee}\otimes\nu^{s}\varrho_4^{\vee}),\\
\label{space:homspace xi 2}
&\Hom_{\GL_j}(\nu^{s}\varrho_2,\nu^{s}\varrho_3^{\vee}).
\end{align}
If $j<k$ or $j<n-k$, Corollary~\ref{corollary:hom of tensor separates} (restricting \eqref{space:homspace xi 1}
to $\GL_{n-k-j}^{\square}\times\GL_{k-j}$ and $\GL_{n-k-j}\times\GL_{k-j}^{\square}$) implies that \eqref{space:homspace xi 1} vanishes unless
$\nu^{s}\varrho_1$ and $\nu^{-s}\varrho_4$ are distinguished, which is false for almost all values of $q^{-s}$ (simply considering the central characters).

The remaining case is when $j=k=n-k$, then \eqref{space:homspace xi 2} becomes
$\Hom_{\GL_k}(\nu^{s}\tau_1^{\vee},\nu^{s}\tau_2)$,
which is at most one-dimensional.
\end{proof}
\end{proof}

Theorems~\ref{theorem:upper heredity of dist} and \ref{theorem:tau otimes tau dual is dist}
imply the following corollary.
\begin{corollary}\label{corollary:correctness for irred generic}
Let $\alpha=(\alpha_1,\ldots,\alpha_m)$ be a partition of $n$ and $\tau_1\otimes\ldots\otimes\tau_m\in\Alg{M_{\alpha}}$. Assume that
if $\alpha_i$ is odd, $\tau_i$ is distinguished, and if $\alpha_i$ is even, $\tau_i$ is either distinguished, or
equals $\tau_{i,0}\times\tau_{i,0}^{\vee}$ for some $\tau_{i,0}\in\Alg_{\mathrm{irr}}{\GL_{\alpha_i/2}}$. Then
$\tau_1\times\ldots\times\tau_m$ is distinguished.
\end{corollary}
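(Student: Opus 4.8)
The plan is to deduce this directly from Theorems~\ref{theorem:upper heredity of dist} and \ref{theorem:tau otimes tau dual is dist}. The first step is to observe that each inducing factor $\tau_i$ is, in every case, a distinguished representation of $\GL_{\alpha_i}$: when $\tau_i$ is distinguished by hypothesis there is nothing to check, and when $\alpha_i$ is even and $\tau_i=\tau_{i,0}\times\tau_{i,0}^{\vee}$ with $\tau_{i,0}\in\Alg_{\mathrm{irr}}{\GL_{\alpha_i/2}}$, Theorem~\ref{theorem:tau otimes tau dual is dist} applied to $\tau_{i,0}$ gives that $\tau_i$ is distinguished. Along the way I would record that all these representations admit central characters, so that ``distinguished'' is meaningful: an irreducible $\tau_{i,0}$ has a central character, hence $\tau_{i,0}\times\tau_{i,0}^{\vee}$ has central character $\omega_{\tau_{i,0}}\omega_{\tau_{i,0}}^{-1}=1$ (the modulus $\delta_{Q_{\alpha_i/2,\alpha_i/2}}$ being trivial on the diagonal center). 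In particular $\omega_{\tau_i}^{2}=1$ for every $i$, so $\prod_i\omega_{\tau_i}$ is a legitimate central character for $\tau_1\times\cdots\times\tau_m$ and it squares to $1$, consistently with the conclusion.

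With all $\tau_i$ now known to be distinguished, I would finish by induction on $m$. The case $m=1$ is trivial. For $m\geq 2$, the representation $\sigma=\tau_2\times\cdots\times\tau_m$ of $\GL_{\alpha_2+\cdots+\alpha_m}$ is admissible and admits a central character, and by the induction hypothesis applied to the sub-partition $(\alpha_2,\ldots,\alpha_m)$ and the tuple $\tau_2\otimes\cdots\otimes\tau_m$ (whose factors still satisfy the hypotheses of the corollary), $\sigma$ is distinguished. Now Theorem~\ref{theorem:upper heredity of dist}, applied to the admissible representation $\tau_1\otimes\sigma\in\Alg{M_{\alpha_1,\alpha_2+\cdots+\alpha_m}}$ with both factors distinguished, yields that $\tau_1\times\sigma$ is distinguished. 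By induction in stages (transitivity of the functor $\times$, together with the cocycle relation for modulus characters) $\tau_1\times\sigma=\tau_1\times\cdots\times\tau_m$, which closes the induction.

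The only points requiring a word of justification, neither of them substantive, are: that Theorem~\ref{theorem:upper heredity of dist} applies even though the second block $\sigma$ is in general reducible --- this is fine, since that theorem only asks that the two factors be admissible with central characters, not irreducible; and the identity $\tau_1\times\sigma=\tau_1\times\cdots\times\tau_m$, which is immediate from the definition of $\times$ in Section~\ref{subsection:representations}. There is no genuine obstacle here: the entire content lies in Theorems~\ref{theorem:upper heredity of dist} and \ref{theorem:tau otimes tau dual is dist}, and the corollary merely packages them through a bookkeeping induction.
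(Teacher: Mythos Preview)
Your argument is correct and is exactly the unpacking of the paper's one-line proof, which merely states that the corollary follows from Theorems~\ref{theorem:upper heredity of dist} and \ref{theorem:tau otimes tau dual is dist}. The extra remarks you make about central characters and admissibility are accurate and harmless, though not strictly needed once one has granted that each $\tau_i$ is distinguished in the sense of Section~\ref{section:Distinguished representations}.
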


Here and onward we assume that $F$ is a field of characteristic $0$. This is because we rely on several results
not proven for fields of nonzero characteristic, e.g., certain
functoriality results.

The following remark will be used in our analysis of square-integrable distinguished representations.
\begin{remark}\label{remark:Shahidi's computation of L function L2 tau Sym2}
Let $\tau=\langle[\nu^{(1-l)/2}\rho,\nu^{(l-1)/2}\rho]\rangle^t$, where $\rho\in\Alg_{\mathrm{cusp}}{\GL_k}$ is unitary and $l\geq1$ is an
integer (see Section~\ref{subsection:representations}). Put $n=lk$.
The computation of $L(s,\tau,\mathrm{Sym}^2)$ by Shahidi \cite{Sh5} (Proposition~8.1) implies the following characterization for the existence of
a pole of $L(s,\tau,\mathrm{Sym}^2)$ at $s=0$:
\begin{enumerate}
\item If $n$ is odd and $\rho$ is self-dual, this is always the case.
\item When $n$ is even and $l$ is odd, $L(0,\tau,\mathrm{Sym}^2)=\infty$ if and only if $L(0,\rho,\mathrm{Sym}^2)=\infty$.
\item When $n$ and $l$ are even, $L(0,\tau,\mathrm{Sym}^2)=\infty$ if and only if $L(0,\rho,\bigwedge^2)=\infty$. 
\end{enumerate}
In particular, for a pole to exist $\rho$ (equivalently, $\tau$) must be self-dual.
\end{remark}

The next globalization lemma will be applied, to deduce that square-integrable representations $\tau$ such that
$L(s,\tau,\mathrm{Sym}^2)$ has a pole at $s=0$, are distinguished. It may also be
of independent interest. As mentioned in the introduction, there is an assumption here concerning the quasi-split case,
see Remark~\ref{remark:exact assumptions for globalization lemma} below.

\begin{lemma}\label{lemma:globalization lemma}
Let $\pi\in\Alg_{\mathrm{sqr}}{\GL_n}$. Assume that
$L(s,\pi,\mathrm{R})$ has a pole at $s=0$, for $\mathrm{R}=\mathrm{Sym}^2$ or $\bigwedge^2$. Then there exist
a number field with a ring of ad\`{e}les $\Adele$ and a global cuspidal representation
$\Pi$ of $\GL_n(\Adele)$, such that for a sufficiently large finite set of places $S$, $L^S(s,\Pi,\mathrm{R})$ has a pole at $s=1$, and at some place $v$, $\Pi_{v}=\pi$.
\end{lemma}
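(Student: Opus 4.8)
The plan is to realize $\pi$ as a local component of a global cuspidal representation and then control the symmetric or exterior square $L$-function at $s=1$ via the pole of the corresponding Langlands--Shahidi or Rankin--Selberg integral attached to an endoscopic classical group. First I would invoke the standard correspondence between poles of $L(s,\pi,\mathrm{R})$ at $s=0$ and the fact that $\pi$ is a local functorial lift from a quasi-split classical group: if $\mathrm{R}=\bigwedge^2$ then $\pi$ should come from $\Sp_{2m}$ (when $n=2m+1$) or $\SO_{2m+1}$-type data, and if $\mathrm{R}=\mathrm{Sym}^2$ then $\pi$ comes from an even or odd special orthogonal group, by the local Langlands classification for these groups together with the description of the image of functoriality (Cogdell--Kim--Piatetski-Shapiro--Shahidi \cite{CKPS}, \cite{CPSS}); this is exactly the point where the assumption on the local lift for quasi-split $\SO_{2m}$ enters, so I would phrase the argument so that this is the only external input. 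Concretely, since $\pi$ is square-integrable, it corresponds to a discrete-series (indeed a suitable tempered or discrete) parameter of the relevant classical group $H$, and by the local Langlands correspondence for $H$ there is a square-integrable $\pi_H\in\Alg_{\mathrm{sqr}}{H(F)}$ lifting to $\pi$.

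Next I would globalize $\pi_H$ rather than $\pi$ directly. Using a Poincar\'e-series / simple-trace-formula argument (e.g.\ the globalization of a single square-integrable local component in the style of Henniart, or the versions in \cite{Shahidi} and the appendices of papers on functoriality), one produces a number field $k$ with a place $v_0$ where $k_{v_0}\cong F$, and a global cuspidal automorphic representation $\Pi_H$ of $H(\Adele)$ with $(\Pi_H)_{v_0}=\pi_H$, which is additionally arranged to be unramified outside a controlled finite set $S$ and to have a cohomological or Steinberg-type component at one auxiliary finite place to force cuspidality. Then I would apply the global functorial lift of \cite{CKPS}, \cite{CPSS} to obtain an automorphic representation $\Pi$ of $\GL_n(\Adele)$ which is the transfer of $\Pi_H$; the image of this lift consists of isobaric sums, and because $\Pi_H$ is cuspidal (not a lift from a smaller group, which one ensures via the auxiliary local condition and the characterization of the image), $\Pi$ is itself cuspidal. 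By construction $\Pi_{v_0}$ is the local lift of $\pi_H=(\Pi_H)_{v_0}$, hence $\Pi_{v_0}=\pi$.

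Finally I would read off the pole. By the defining property of the functorial transfer, $L^S(s,\Pi,\mathrm{R})$ equals (up to finitely many Euler factors, absorbed into enlarging $S$) the standard $L$-function $L^S(s,\Pi_H,\mathrm{std})$ of the classical group, or more precisely the partial $L$-function whose pole at $s=1$ detects that $\Pi_H$ is globally generic/cuspidal and not a further lift. That this partial $L$-function has a pole at $s=1$ follows from the theory of the relevant Eisenstein series: for a cuspidal $\Pi_H$ on $H$, the Langlands--Shahidi $L$-function $L^S(s,\Pi,\mathrm{R})$ has a pole at $s=1$ precisely when $\Pi$ is the $\mathrm{R}$-type transfer, and the Arthur/Ginzburg--Rallis--Soudry descent theory guarantees the nonvanishing of the residue; alternatively one cites directly that $L^S(s,\Pi,\mathrm{Sym}^2)$ (resp.\ $\bigwedge^2$) has a pole at $s=1$ iff $\Pi$ is a transfer from the orthogonal (resp.\ symplectic) tower, which is our situation. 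See Lemma~\ref{lemma:globalization lemma}.

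I expect the main obstacle to be the globalization step that simultaneously (i) fixes the local component at $v_0$ to be exactly $\pi_H$, (ii) keeps the representation cuspidal, and (iii) keeps it ramified only at finitely many places so that the partial $L$-function makes sense and still detects the pole --- in particular ruling out that $\Pi_H$ is itself a lift from a smaller classical group, which would make $\Pi$ non-cuspidal; handling the quasi-split even orthogonal case is where one cannot presently avoid the assumption recorded in Remark~\ref{remark:exact assumptions for globalization lemma}, and I would flag this explicitly at that point in the argument.
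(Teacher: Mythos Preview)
Your overall strategy matches the paper's: descend $\pi$ to a generic square-integrable representation $\pi'$ of a classical group $G_n$, globalize on $G_n$, apply the global functorial lift to $\GL_n$, and read off the pole of $L^S(s,\Pi,\mathrm{R})$ at $s=1$. However, two technical points that you treat loosely are precisely where the paper's proof does real work.

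\textbf{Cuspidality of $\Pi$.} You propose to force this by a Steinberg-type auxiliary local condition on $\Pi_H$ together with ``not a lift from a smaller group''. The paper instead fixes a \emph{second} place $v_2$ and an irreducible unitary supercuspidal $\rho\in\Alg_{\mathrm{cusp}}{\GL_n}$ with $L(0,\rho,\mathrm{R})=\infty$, descends it to a generic supercuspidal $\rho'$ of $G_n$, and globalizes $\Pi'$ on $G_n(\Adele)$ with $\Pi'_{v_1}=\pi'$ and $\Pi'_{v_2}=\rho'$ simultaneously (this two-place globalization is where Ichino--Lapid--Mao \cite{ILM}, Corollary~A.6, is invoked). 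After lifting, one verifies $\Pi_{v_2}=\rho$; since a local component of $\Pi$ is supercuspidal, $\Pi$ cannot be a nontrivial isobaric sum and is therefore cuspidal. Your Steinberg device might be salvageable, but you have not argued that the local lift of your auxiliary component is square-integrable on $\GL_n$, and ``not endoscopic'' is not obviously a local condition in the way you use it.

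\textbf{The identification $\Pi_{v_0}=\pi$.} You assert this ``by construction'', but the global lift of \cite{CKPS,CPSS} is characterized via matching of $L$-functions and does not a priori determine the ramified local components as the local functorial lifts. The paper closes this gap by combining the preservation of $\gamma$-factors at all finite places (\cite{CKPS}, Proposition~7.2) with the local converse theorem for $\GL_n$ (Henniart \cite{GH}) to conclude $\Pi_{v_1}=\pi$ and $\Pi_{v_2}=\rho$. This step is not optional.

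A minor correction: your assignment of classical groups is scrambled. In the paper, $\mathrm{R}=\bigwedge^2$ forces $n=2m$ and $G_n=\SO_{2m+1}$; $\mathrm{R}=\mathrm{Sym}^2$ with $n=2m+1$ gives $G_n=\Sp_m$; and $\mathrm{R}=\mathrm{Sym}^2$ with $n=2m$ gives quasi-split $\SO_{2m}$ (the last case being where Remark~\ref{remark:exact assumptions for globalization lemma} applies).
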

\begin{proof}[Proof of Lemma~\ref{lemma:globalization lemma}]
Consider the local functorial lift of \cite{CKPS}. Let
\begin{align*}
G_n=
\begin{cases}
\SO_{2m+1}&\text{$\mathrm{R}=\bigwedge^2$, $n=2m$,}\\
\Sp_{m}&\text{$\mathrm{R}=\mathrm{Sym}^2$, $n=2m+1$,}\\
\SO_{2m}&\text{$\mathrm{R}=\mathrm{Sym}^2$, $n=2m$.}
\end{cases}
\end{align*}
Here the even orthogonal group is quasi-split (split, or non-split but split over a quadratic extension). We claim that there is a generic square-integrable representation $\pi'$ of $G_n$, such
that
\begin{align}\label{eq:preservation of L and epsilon factors}
L(s,\pi'\times\varrho)=L(s,\pi\times\varrho),\qquad \epsilon(s,\pi'\times\varrho,\psi)=\epsilon(s,\pi\times\varrho,\psi),
\end{align}
for any generic $\varrho\in\Alg_{\mathrm{irr}}{\GL_k}$ and $k>0$. Here the $L$ and $\epsilon$-factors are defined by the Langlands-Shahidi method \cite{Sh3}.

Indeed, if $\mathrm{R}=\bigwedge^2$ this follows immediately from Theorem~2.1 of Jiang and Soudry \cite{JSd2}. Assume
$\mathrm{R}=\mathrm{Sym}^2$. Then if $\pi$ is supercuspidal, this follows from \cite{ACS} and \cite{PR} (the appendix). In fact,
the local functorial lift of an irreducible supercuspidal generic representation, of a quasi-split classical group, to some $\GL_N$ was detailed in
\cite{ACS} (Theorem~3.2). Given that, the results of Jiang and Soudry in the appendix of \cite{PR} show that the lift from quasi-split $SO_{2m}$
is surjective onto the supercuspidal representations of $\GL_{2m}$.

Regarding the remaining cases where $\mathrm{R}=\mathrm{Sym}^2$:
the existence of $\pi'$ when $n$ is odd, is contained in Theorem~4.8 of Liu \cite{BL}; if $n$ is even and the central character
$\omega_{\pi}=1$, this was proved in Theorem~4.8 of Jantzen and Liu \cite{JB} (in \cite{JB} only
the split case was analyzed, when $n=2m$ and $\omega_{\pi}\ne1$, $SO_{2m}$ will be non-split but quasi-split). We prove the last case in the following claim.
\begin{claim}\label{claim:quasi split lift}
A representation $\pi'$ with the above properties exists also when $n=2m$, $\mathrm{R}=\mathrm{Sym}^2$, $\pi$ is not supercuspidal and $\omega_{\pi}\ne1$.
\end{claim}

Additionally we take $\rho\in\Alg_{\mathrm{cusp}}{\GL_n}$ such that $L(s,\rho,\mathrm{R})$ has a pole at $s=0$. By
\cite{JSd2}, \cite{ACS} and \cite{PR} (the appendix), there is a generic supercuspidal representation $\rho'$ of $G_n$ satisfying equalities similar to
\eqref{eq:preservation of L and epsilon factors}.

Now according to Ichino, Lapid and Mao \cite{ILM} (Corollary~A.6, which actually holds for the types of $G_n$ stated above, when $G_n$ is split,
by replacing Proposition~A.5 with \cite{CKPS} Corollary~10.1), 
there exist a number field with a ring of
ad\`{e}les $\Adele$ and a cuspidal globally generic representation $\Pi'$ of $G_n(\Adele)$, such that
$\Pi'_{v_1}=\pi'$ and $\Pi'_{v_2}=\rho'$ for some pair of places $v_1$ and $v_2$.

Next we apply the global lift
of \cite{CKPS,CPSS} and obtain an automorphic representation $\Pi$ of $\GL_{n}(\Adele)$ such that $L^S(s,\Pi,\mathrm{R})$ has a pole at $s=1$,
assuming $S$ is large enough. Moreover, the global lift preserves the $\gamma$-factors at all finite places $v$ (\cite{CKPS} Proposition~7.2), hence
\begin{align*}
\gamma(s,\Pi_{v_i}\times\varrho,\psi)=
\gamma(s,\Pi'_{v_i}\times\varrho,\psi),\qquad\forall\varrho\in\Alg_{\mathrm{cusp}}{\GL_k},\quad 1\leq k\leq n-1,\quad i=1,2.
\end{align*}
Together with the fact that the local lift preserves $L$ and $\epsilon$-factors, we see that
\begin{align*}
&\gamma(s,\Pi_{v_1}\times\varrho,\psi)=
\gamma(s,\pi\times\varrho,\psi),\qquad
\gamma(s,\Pi_{v_2}\times\varrho,\psi)=
\gamma(s,\rho\times\varrho,\psi),
\end{align*}
for all $\varrho\in\Alg_{\mathrm{cusp}}{\GL_k}$ and $1\leq k\leq n-1$.
By virtue of the local converse theorem for $\GL_n$ (\cite{GH} after Theorem~1.1), this implies $\Pi_{v_1}=\pi$ and $\Pi_{v_2}=\rho$. Because $\rho$
is supercuspidal, we deduce that $\Pi$ is cuspidal.

\begin{proof}[Proof of Claim~\ref{claim:quasi split lift}]
We adapt the proofs from \cite{JSd2} (Theorem~2.1) and \cite{JB} (Theorem~4.8). We are interested in a simpler result, concerning only
square-integrable representations. Our assumptions on $\pi$ are that $n=2m$,
$L(s,\pi,\mathrm{Sym}^2)$ has a pole at $s=0$, $\pi$ is not supercuspidal and $\omega_{\pi}\ne1$. Hence
$\pi=\langle[\nu^{-l}\varepsilon,\nu^{l}\varepsilon]\rangle^t$, where $\varepsilon\in\Alg_{\mathrm{cusp}}{\GL_k}$ is self-dual, $0<l\in\mathbb{Z}$,
$k$ is even, $2m=(2l+1)k$ and $L(s,\varepsilon,\mathrm{Sym}^2)$ has a pole at $s=0$
(see Remark~\ref{remark:Shahidi's computation of L function L2 tau Sym2}).

According to \cite{ACS} and \cite{PR} (the appendix), there is a generic supercuspidal representation $\varepsilon'$ of a quasi-split $SO_{k}$ satisfying
\begin{align}\label{eq:preservation of L and epsilon factors 22}
L(s,\varepsilon'\times\varrho)=L(s,\varepsilon\times\varrho),\qquad \epsilon(s,\varepsilon'\times\varrho,\psi)=\epsilon(s,\varepsilon\times\varrho,\psi),
\end{align}
for any generic $\varrho\in\Alg_{\mathrm{irr}}{\GL_k}$ and $k>0$. Then
$\langle[\nu\varepsilon,\nu^{l}\varepsilon]\rangle^t\otimes\varepsilon'$ is a representation of $\GL_{lk}\times SO_k$. Consider the representation $\langle[\nu\varepsilon,\nu^{l}\varepsilon]\rangle^t\rtimes\varepsilon'$ of $SO_{2m}$, parabolically induced from $\langle[\nu\varepsilon,\nu^{l}\varepsilon]\rangle^t\otimes\varepsilon'$.
Let $\pi'$ be the unique irreducible generic subquotient
of $\langle[\nu\varepsilon,\nu^{l}\varepsilon]\rangle^t\rtimes\varepsilon'$. One can show that $\pi'$ is square-integrable
(here the root system of $SO_{2m}$ is of type $B_{m-1}$; the arguments of \cite{Mu4} Section~2 and \cite{Td}, for $SO_{2n+1}$, can be slightly modified
to apply to $SO_{2m}$;
in the notation of \cite{Mu4} the pair $(\varepsilon,\varepsilon')$ satisfies $(C1)$).
Now the proof of the preservation of local factors, i.e., \eqref{eq:preservation of L and epsilon factors}, follows exactly as in
\cite{JSd2} (proof of Theorem~2.1), using the multiplicativity of the $\gamma$-factors and \eqref{eq:preservation of L and epsilon factors 22}.
\end{proof}
\end{proof}
\begin{remark}\label{remark:exact assumptions for globalization lemma}
In one case, namely when $n=2m$, $\mathrm{R}=\mathrm{Sym}^2$ and $\omega_{\pi}\ne1$, the classical group $SO_{2m}$ is non-split (split over
a quadratic extension). In this case, the lemma is valid under two mild caveats.
First, the functorial lift in the quasi-split case was only described globally (\cite{CPSS}), leaving out the local lift, which we use.
Second, in appealing to \cite{ILM} (Corollary~A.6), we need the following result
obtained in \cite{CKPS} (Corollary~10.1) for split groups: 
for a globally generic cuspidal representation $\Pi'$ of $G_n(\Adele)$, at all places $v$, the local Langlands parameters of $\Pi'_v$ (the Satake parameters
when data are unramified) are bounded in absolute value by $\frac12-\frac1{N^2+1}$, where $\Pi'$ functorially lifts to a representation of
$\GL_N(\Adele)$. This result follows from the local lift and the strong bounds of Luo, Rudnick and Sarnak \cite{LRS}, and will be obtained once the details of the local lift are provided.
\end{remark}

We recall Theorem~2 of \cite{me10}:
\begin{theorem}\label{theorem:supercuspidal dist GLn and pole}
Let $\tau\in\Alg_{\mathrm{cusp}}{\GL_n}$ be unitary. Then $\tau$ is
distinguished if and only if $L(s,\tau,\mathrm{Sym}^2)$ has a pole at $s=0$.
\end{theorem}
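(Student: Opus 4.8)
### Proof Proposal

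\emph{The plan.} The statement is Theorem~2 of \cite{me10}, so the proof is essentially a citation, but I want to record the shape of the argument and how the present paper's machinery recovers it. The theorem has two directions. For the ``only if'' direction I would run the filtration argument of Lemma~\ref{lemma:repeated filtration argument for 2 blockes} on $\theta\otimes\theta'$ along the maximal parabolic $Q_{n-k,k}$ for each $0<k<n$ and track which orbit representatives $\psi_j$ can contribute a nonzero homomorphism to $\tau^\vee$; since $\tau$ is supercuspidal, all its proper Jacquet modules vanish, which kills every term with $0\le j<\min(n-k,k)$ and every $k$, leaving only the possibility that $n=2k$ and $j=k$ — i.e. that $\tau^\vee$ (hence $\tau$) occurs via the metaplectic Shalika model of $\theta$ furnished by Theorem~\ref{theorem:twisted Jacquet modules of theta along generic character in k=n-k case}. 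That is impossible for $n$ odd (no $k$ with $n=2k$), consistent with Remark~\ref{remark:Shahidi's computation of L function L2 tau Sym2}(1)–(2) forcing self-duality; and when $n=2k$ the nonvanishing of the relevant $\Hom$ space, after unwinding the Godement–Jacquet-type integral in the proof of Theorem~\ref{theorem:tau otimes tau dual is dist}, is equivalent to a pole of the integral $\int_G L(\varphi\otimes\varphi')(g)h(g)\nu^{s-k/2}(g)\,dg$ at $s=0$, which by the local functional equation for the symmetric square is the pole of $L(s,\tau,\mathrm{Sym}^2)$ at $s=0$.

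\emph{The ``if'' direction.} Here I would use the globalization Lemma~\ref{lemma:globalization lemma}: given $\tau$ supercuspidal unitary with $L(s,\tau,\mathrm{Sym}^2)$ having a pole at $s=0$, globalize to a cuspidal $\Pi$ on $\GL_n(\Adele)$ with $\Pi_v=\tau$ at one place and $L^S(s,\Pi,\mathrm{Sym}^2)$ having a pole at $s=1$. By Bump–Ginzburg \cite{BG}, a pole of the global symmetric square $L$-function at $s=1$ is detected by the nonvanishing of a global period of $\Pi$ against a residue of the Eisenstein series built from the exceptional representation $\theta$ of $\widetilde{\GL}_n(\Adele)$ — i.e. $\Pi$ is globally $(\theta,\theta')$-distinguished. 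Unramified computations at almost all places together with a local-global argument (nonvanishing of a global $\Hom$ forces nonvanishing of the local one at $v$, using that the local $\Hom$ spaces are finite-dimensional — in fact at most one-dimensional outside a finite set by Claim~\ref{claim:homspace at most one dimensional for a pair}, and the multiplicity-one results of Savin \cite{Savin3} and Kable \cite{Kable,Kable2}) then yields $\Hom_{\GL_n}(\theta\otimes\theta',\tau^\vee)\ne0$, i.e. $\tau$ is distinguished.

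\emph{Main obstacle.} The delicate point is the local-global descent in the ``if'' direction: extracting a \emph{local} invariant functional at the place $v$ from the \emph{global} period, and matching normalizations between the Bump–Ginzburg Rankin–Selberg integral (and its residue) and the abstract $\Hom$-space in \eqref{homspace:main for distinguished}. One must verify that the global integral does not vanish for \emph{some} choice of data that is a pure tensor with the prescribed local component at $v$, and that the factorization of the period through local functionals is valid despite $\theta\otimes\theta'$ being of infinite length — this is where the explicit description of the twisted Jacquet modules and the metaplectic Shalika model are used to control the local functionals. The ``only if'' direction is comparatively routine once the Jacquet-module computations of Section~\ref{section:Heisenberg jacquet modules} are in hand, since supercuspidality collapses the filtration to a single term.
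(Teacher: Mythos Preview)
You correctly note that the paper simply cites this as Theorem~2 of \cite{me10}, so there is no proof here to compare against. Your sketch of the ``if'' direction (globalize via Lemma~\ref{lemma:globalization lemma}, invoke the Bump--Ginzburg period \eqref{int:BG period}, then descend locally) is the right shape and matches what the paper does verbatim in the square-integrable extension, Theorem~\ref{theorem:L2 distinguished implies cusp support self-dual}.

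Your ``only if'' sketch, however, does not work. Lemma~\ref{lemma:repeated filtration argument for 2 blockes} begins from a hypothesis of the form
\[
\Hom_{M_{n-k,k}}((\theta\otimes\theta')_{U_{n-k,k}},\delta_{Q_{n-k,k}}^{1/2}\tau_1^{\vee}\otimes\tau_2^{\vee})\ne0,
\]
which presupposes an embedding of $\tau^{\vee}$ into a properly parabolically induced representation. A supercuspidal $\tau$ admits no such embedding, so the filtration machinery of Section~\ref{section:Heisenberg jacquet modules} never gets off the ground; there is no ``only $n=2k$, $j=k$ survives'' conclusion to draw. Moreover, the Godement--Jacquet-type integral \eqref{int:formal integral} in Theorem~\ref{theorem:tau otimes tau dual is dist} is tied to the \emph{standard} $L$-function $L(s,\tau\times\tau^{\vee})$, not to $L(s,\tau,\mathrm{Sym}^2)$; the asserted passage ``by the local functional equation for the symmetric square'' is not available from anything in this paper.

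The actual mechanism in \cite{me10} --- visible in the present paper in the proof of Theorem~\ref{theorem:L2 distinguished implies cusp support self-dual} (the case $n,l$ even) and in Remark~\ref{remark:irred} --- goes through $\GSpin_{2n+1}$: if $\tau$ is distinguished then $\mathrm{I}(\tau,1/2)$ is distinguished (\cite{me10} Corollary~4.8); since $\Theta\otimes\Theta'$ admits no Whittaker functional (\cite{me10} Theorem~1), $\mathrm{I}(\tau,1/2)$ cannot be irreducible (it would then be generic); for supercuspidal $\tau$, reducibility of $\mathrm{I}(\tau,1/2)$ is equivalent to the pole of $L(s,\tau,\mathrm{Sym}^2)$ at $s=0$ by the Shahidi-type criterion of \cite{ACS}. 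This is the missing idea in your ``only if'' direction.
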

We extend this result to square-integrable representations.
Note that in particular, a supercuspidal distinguished representation must be self-dual.
\begin{theorem}\label{theorem:L2 distinguished implies cusp support self-dual}
Let $\tau\in\Alg_{\mathrm{sqr}}{\GL_n}$. Then $\tau$ is distinguished if and only if
$L(s,\tau,\mathrm{Sym}^2)$ has a pole at $s=0$.
\end{theorem}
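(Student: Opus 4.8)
Write $\tau=\langle[\nu^{(1-l)/2}\rho,\nu^{(l-1)/2}\rho]\rangle^t$ with $\rho\in\Alg_{\mathrm{cusp}}{\GL_k}$ unitary and $n=lk$, as in Remark~\ref{remark:Shahidi's computation of L function L2 tau Sym2}; the two implications require quite different ingredients. For the implication that a pole of $L(s,\tau,\mathrm{Sym}^2)$ at $s=0$ forces $\tau$ to be distinguished, the plan is to rerun the global argument used for the supercuspidal case in \cite{me10}, the only new input being Lemma~\ref{lemma:globalization lemma}, which now also covers the square-integrable case. First I would apply Lemma~\ref{lemma:globalization lemma} with $\mathrm{R}=\mathrm{Sym}^2$ to obtain a number field, a finite place $v$ over which the completion is $F$, and a global cuspidal representation $\Pi$ of $\GL_n(\Adele)$ with $\Pi_v=\tau$ and with $L^S(s,\Pi,\mathrm{Sym}^2)$ having a pole at $s=1$ for $S$ large. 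The Rankin--Selberg integral of Bump and Ginzburg \cite{BG} for the symmetric square $L$-function, together with the computation of the residue of the relevant Eisenstein series, translates this pole into the non-vanishing of a global period of the form $\int_{\GL_n\backslash\GL_n(\Adele)}\Theta(g)\Theta'(g)\varphi(g)\,dg$ attached to a pair of exceptional automorphic forms and some $\varphi$ in the space of $\Pi$. Factoring this Euler product and invoking the local multiplicity-one property of $\theta\otimes\theta'$ (Savin \cite{Savin3}, Kable \cite{Kable,Kable2}), one gets $\Hom_{\GL_n(F_w)}(\theta_w\otimes\theta'_w,\Pi_w^{\vee})\ne0$ at every place $w$; taking $w=v$ shows that $\tau$ is distinguished.

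For the converse, suppose $\tau$ is distinguished, so that there is a surjection $\theta\otimes\theta'\twoheadrightarrow\tau^{\vee}$. Since $\tau$ is essentially square-integrable, its normalized Jacquet module along a maximal parabolic $Q_{n-k',k'}$ is nonzero precisely when $k'$ is a multiple of $k$, and is then a single irreducible representation whose two factors are attached to the two sub-segments into which $[\nu^{(1-l)/2}\rho,\nu^{(l-1)/2}\rho]$ splits. Applying the exact Jacquet functor to the surjection and then Lemma~\ref{lemma:repeated filtration argument for 2 blockes}, and iterating --- every splitting that occurs is again along a multiple of $k$, cutting the segment of $\tau$ further --- one reduces the distinction of $\tau$ to a collection of conditions on the atoms $\nu^{i}\rho$ of that segment: via Corollary~\ref{corollary:hom of tensor separates} each ``outer'' atom must be distinguished, and each ``inner'' $\GL_j$-contribution must be paired with its contragredient. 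This already forces $\rho\cong\rho^{\vee}$, hence $\tau$ is self-dual. Keeping track, through these reductions, of the genuine character by which $\widetilde{\mathbb{GL}}_j$ acts --- in particular the pseudo-character $\gamma_{(j)}$ and the Weil factor $\gamma_{\psi,(-1)^{j-1}}$ produced in Theorem~\ref{theorem:twisted Jacquet modules of theta along maximal parabolic}, with square-trivial twists absorbed by Claims~\ref{claim:taking characters out of exceptional} and \ref{claim:eta tau is dist when tau is for odd n} --- the sign $(-1)^{j-1}$ records a parity flip: when $l$ is odd the central atom $\nu^{0}\rho$ must itself be distinguished, while when $l$ is even all atoms pair off and $\rho$ must be self-dual but \emph{not} distinguished. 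Feeding this into the supercuspidal case Theorem~\ref{theorem:supercuspidal dist GLn and pole} (so that distinction of $\rho$ is equivalent to a pole of $L(s,\rho,\mathrm{Sym}^2)$ at $s=0$, and non-distinction of a self-dual $\rho$ to a pole of $L(s,\rho,\bigwedge^2)$ at $s=0$) and then into Shahidi's computation recorded in Remark~\ref{remark:Shahidi's computation of L function L2 tau Sym2}, one obtains in each of the cases ($n$ odd; $n$ even and $l$ odd; $n$ and $l$ even) that $L(s,\tau,\mathrm{Sym}^2)$ has a pole at $s=0$.

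The hard part is precisely this bookkeeping in the converse direction: one must follow the exact genuine character of $\widetilde{\mathbb{GL}}_j$ through the iterated application of Lemma~\ref{lemma:repeated filtration argument for 2 blockes} and reconcile the two pseudo-characters $\gamma_{(j)},\gamma'_{(j)}$ coming from $\theta$ and $\theta'$, so as to determine --- not merely up to a square-trivial twist --- whether the final reduction identifies $\rho$ with a distinguished self-dual supercuspidal or with a symplectic-type (non-distinguished) one; the sign $(-1)^{j-1}$ of Theorem~\ref{theorem:twisted Jacquet modules of theta along maximal parabolic} is exactly the mechanism that decides this according to the parity of $l$, and getting the combinatorics of the atoms $\nu^{i}\rho$ to match this sign is where the real work lies. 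A lesser obstacle, in the direct implication, is the Eulerian factorization of the global period, which rests on the local uniqueness of the $\theta\otimes\theta'$-functional.
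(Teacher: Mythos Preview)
Your argument for the implication ``pole $\Rightarrow$ distinguished'' matches the paper's: globalize via Lemma~\ref{lemma:globalization lemma}, invoke the Bump--Ginzburg period, and descend by the local--global principle.

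The converse, however, has a real gap in the case where $n$ and $l$ are both even. Your plan is to track the genuine character of $\widetilde{\mathbb{GL}}_j$ through the iterated Jacquet--module reductions and argue that the sign $(-1)^{j-1}$ forces $\rho$ to be self-dual but \emph{not} distinguished. This cannot work. In Lemma~\ref{lemma:repeated filtration argument for 2 blockes} the $\mathbb{GL}_j$-factor is $(\theta_{2j})_{U_{j,j},\psi_j}\otimes(\theta'_{2j})_{U_{j,j},\psi_j^{-1}}$, and by Theorem~\ref{theorem:twisted Jacquet modules of theta along generic character in k=n-k case} together with $\gamma_{\psi}\gamma_{\psi^{-1}}=1$ this is the \emph{trivial} character of $\mathbb{GL}_j$. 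So the only information extracted on that factor is $\Hom_{\GL_k}(\rho,\rho^{\vee})\ne0$, i.e.\ $\rho\cong\rho^{\vee}$; the Weil signs cancel completely and no parity datum survives to tell you whether $\rho$ is of orthogonal or symplectic type. Iterating peels off two copies of $\rho$ at a time and, when $l$ is even, terminates at the empty segment having learned only that $\rho$ is self-dual --- exactly what the paper records in Claim~\ref{claim:combinatorial structure of distinguished L2}.

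The paper disposes of this residual case by an entirely different device: assuming $\tau$ distinguished with $L(0,\tau,\mathrm{Sym}^2)<\infty$, one deduces $L(0,\rho,\mathrm{Sym}^2)=\infty$, whence the parabolically induced representation $\mathrm{I}(\tau,1/2)$ of the split odd general spin group $G_n$ is irreducible (via \cite{ACS} and Tadi\'{c} \cite{Td2}), hence generic; but by \cite{me10} (Corollary~4.8) it is also distinguished, contradicting the fact that $\Theta\otimes\Theta'$ admits no Whittaker functional (\cite{me10} Theorem~1). You will need this or some comparable external input --- the Jacquet--module combinatorics alone do not see the symmetric/exterior dichotomy for $\rho$ when $l$ is even.
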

\begin{proof}[Proof of Theorem~\ref{theorem:L2 distinguished implies cusp support self-dual}]
Write $\tau=\langle[\nu^{(1-l)/2}\rho,\nu^{(l-1)/2}\rho]\rangle^t$, where $\rho\in\Alg_{\mathrm{cusp}}{\GL_k}$ is unitary 
and $l=n/k$. Assume that $\tau$ is distinguished. First we have the following claim regarding the combinatorial structure of $\tau$.

\begin{claim}\label{claim:combinatorial structure of distinguished L2}
If $\tau$ is distinguished, then $\rho$ is self-dual and when $l$ is odd, $\rho$ is distinguished.
\end{claim}
The claim implies that $\tau$ is self-dual, whence $L(s,\tau\times\tau)$ has a pole at $s=0$. If $n$ is odd, we immediately deduce $L(0,\tau,\mathrm{Sym}^2)=\infty$ (\cite{Sh5} Corollary~8.2, $L(0,\tau,\bigwedge^2)<\infty$ when $n$ is odd). If $n$ is even and
$l$ is odd, the claim and Theorem~\ref{theorem:supercuspidal dist GLn and pole} imply $L(0,\rho,\mathrm{Sym}^2)=\infty$,
hence according to the second case of Remark~\ref{remark:Shahidi's computation of L function L2 tau Sym2}, $L(0,\tau,\mathrm{Sym}^2)=\infty$.

Assume $n$ and $l$ are even and
suppose that $\tau$ is distinguished with $L(0,\tau,\mathrm{Sym}^2)<\infty$. Then $L(0,\rho,\bigwedge^2)<\infty$. The claim shows that $\rho$ is self-dual, whence $L(0,\rho,\mathrm{Sym}^2)=\infty$.

Let $G_n$
be the split odd general spin group of rank $\glnidx+1$ (for details see \cite{Asg,AsgSha,HS,me8}). In \cite{me8} we developed a theory of exceptional representations of $G_n$, parallel to the small representations of $\SO_{2n+1}$ of Bump, Friedberg and Ginzburg \cite{BFG,BFG2} (in the following argument one may take $G_n=\SO_{2n+1}$, the only loss is in a technical requirement, for the underlying local field to contain a $4$-th root of unity). These are representations of a double cover $\widetilde{G}_n$ of $G_n$ obtained by restricting
the cover of $\Spin_{2n+3}$ of Matsumoto \cite{Mats}. The definition of exceptional representations is similar to the definition for $\GL_n$,
and they also have a simple parametrization. If $\Theta$ and $\Theta'$ are
a pair of exceptional representations of $\widetilde{G}_n$, an admissible representation $\kappa$ of
$G_n$, which admits a central character, is called distinguished if $\Hom_{G_n}(\Theta\otimes\Theta',\kappa^{\vee})\ne0$.

In \cite{me10} we proved the following ``inflation" result.
Let $P_n$ be a maximal parabolic subgroup of $G_n$ with a Levi part isomorphic to $\GL_n\times\GL_1$.
For $\kappa_0\in\Alg{\GL_n}$ and $s\in\C$, define $\mathrm{I}(\kappa_0,s)=\Ind_{P_n}^{G_n}(\delta_{P_n}^{1/2}\kappa_0|\det{}|^{s}\otimes1)$.
According to \cite{me10} (Corollary~4.8), assuming that $\kappa_0\in\Alg_{\mathrm{irr}}{\GL_n}$ is tempered, $\mathrm{I}(\kappa_0,1/2)$ is distinguished for some pair $\Theta$ and $\Theta'$ (more specifically described in \cite{me10}, but this will not be needed here) if and only if $\kappa_0$ is distinguished.

We claim that $\mathrm{I}(\tau,1/2)$ is irreducible. Indeed according to
Asgari, Cogdell and Shahidi \cite{ACS} (Proposition~4.27), because $L(0,\rho,\mathrm{Sym}^2)=\infty$, $\mathrm{I}(\rho,1/2)$ is reducible
and for a real $s\ne\pm1/2$, $\mathrm{I}(\rho,s)$ is irreducible. Then Theorem~9.1(ii) of
Tadi\`{c} (\cite{Td2}, the proof is combinatorial in nature and carries over from $\SO_{2n+1}$ to $G_n$)
implies that $\mathrm{I}(\tau,1/2)$ is irreducible ($\mathrm{I}(\tau,1/2)$ is $\nu^{1/2}\delta(\rho,l)\rtimes1$ in the notation of \cite{Td2} and $l$ is even).

Being irreducible, $\mathrm{I}(\tau,1/2)$ is also generic.
Now the assumption that $\tau$ is distinguished implies that
$\mathrm{I}(\tau,1/2)$ is a quotient of a space $\Theta\otimes\Theta'$.
However, by Theorem~1 of \cite{me10}, as a representation of $G_n$ the space $\Theta\otimes\Theta'$ does not afford a Whittaker functional, contradiction.

In the opposite direction assume $L(0,\tau,\mathrm{Sym}^2)=\infty$. 
By virtue of Lemma~\ref{lemma:globalization lemma}, there exist a number field $\mathrm{k}$ with
a ring of ad\`{e}les $\Adele$ and a global cuspidal representation $\Pi$ of $\GL_n(\Adele)$, such that $L^S(1,\Pi,\mathrm{Sym}^2)=\infty$ and at some place $v$, $\Pi_v=\tau$. Bump and Ginzburg \cite{BG} proved that when
$L^S(1,\Pi,\mathrm{Sym}^2)=\infty$, the following period integral is nonvanishing (\cite{BG} Theorem~7.6)
\begin{align}\label{int:BG period}
\int_{Z\GL_n(\mathrm{k})\backslash\GL_n(\Adele)}\varphi_{\Pi}(g)\phi(g)\phi'(g)dg.
\end{align}
Here $Z$ is a subgroup of finite index in $C_{\GL_n(\Adele)}$ ($Z=C_{\GL_n(\Adele)}$ when $n$ is odd);
$\varphi_{\Pi}$ is a cusp form in the space of $\Pi$; $\phi$ and $\phi'$ are automorphic forms in the spaces of two global exceptional representations of $\widetilde{\GL}_n(\Adele)$ (defined in \cite{KP} Section~II). The local-global principle implies that $\tau$ is distinguished (see e.g. \cite{JR} Proposition~1).
\begin{proof}[Proof of Claim~\ref{claim:combinatorial structure of distinguished L2}]
If $l=1$, i.e., $\tau$ is unitary supercuspidal, the result follows from
Theorem~\ref{theorem:supercuspidal dist GLn and pole}. 
Assume $l>1$.
According to Zelevinsky \cite{Z3} (Proposition~9.6), $\delta_{Q_{n-k,k}}^{-1/2}(\tau^{\vee})_{U_{n-k,k}}=\tau_1^{\vee}\otimes\nu^{(1-l)/2}\rho^{\vee}$ with
$\tau_1=\langle[\nu^{(1-l)/2}\rho,\nu^{(l-3)/2}\rho]\rangle^t$.
Then
$\tau^{\vee}\subset\tau_1^{\vee}\times\nu^{(1-l)/2}\rho^{\vee}$. Since $\tau$ is distinguished,
\begin{align*}
\Hom_{\GL_n}((\theta_{n,1,\gamma}\otimes\theta_{n,1,\gamma'})_{U_{n-k,k}},\delta_{Q_{n-k,k}}^{1/2}\tau_1^{\vee}\otimes\nu^{(1-l)/2}\rho^{\vee})\ne0,
\end{align*}
for some pair $\gamma$ and $\gamma'$.
Hence by Lemma~\ref{lemma:repeated filtration argument for 2 blockes}, for some $0\leq j\leq \min(n-k,k)=k$,
\begin{align}\label{space:after lemma 2 blocs in eL2 proposition}
\Hom_{L_{n,k,j}}(\xi_{n,k,j}\otimes1,
(\delta_{Q_{n-k-j,j}}^{-1/2}(\tau_1)_{U_{n-k-j,j}}\otimes\delta_{Q_{j,k-j}}^{-1/2}(\nu^{(l-1)/2}\rho)_{U_{j,k-j}})^{\vee})\ne0.
\end{align}
Since $\rho$ is supercuspidal, we must have $j=0$ or $k$.

If $j=0$, the \lhs\ of \eqref{space:after lemma 2 blocs in eL2 proposition} becomes
\begin{align*}
\Hom_{\GL_{n-k}\times\GL_k}((\theta_{n-k,1,\gamma_1}\widetilde{\otimes}_{\gamma_{(0)}}\theta_{k,1,\gamma_2})\otimes
(\theta_{n-k,1,\gamma_1'}\widetilde{\otimes}_{\gamma'_{(0)}}\theta_{k,1,\gamma_2'}),
\tau_1^{\vee}\otimes\nu^{(1-l)/2}\rho^{\vee}).
\end{align*}
Corollary~\ref{corollary:hom of tensor separates} implies that this vanishes, unless $\nu^{(l-1)/2}\rho$ is distinguished. Since $\rho$ is unitary, this is only possible when $l=1$, but
we are assuming $l>1$.

Therefore we must have $j=k$. Then
$\delta_{Q_{n-k-j,j}}^{-1/2}(\tau_1)_{U_{n-2k,k}}=\tau_2\otimes\nu^{(1-l)/2}\rho$, $\tau_2=\langle[\nu^{(3-l)/2}\rho,\nu^{(l-3)/2}\rho]\rangle^t$, and the \lhs\ of \eqref{space:after lemma 2 blocs in eL2 proposition}
is nonzero only if both
\begin{align*}
&\Hom_{\GL_{n-2k}}(\theta_{n-2k,1,\gamma_1}\otimes
\theta_{n-2k,1,\gamma_1'},\tau_2^{\vee})\ne0,\qquad
\Hom_{\GL_{k}}(\rho,\rho^{\vee})\ne0.
\end{align*}
The second condition implies $\rho\isomorphic\rho^{\vee}$. 
The first implies that $\tau_2$ is distinguished, and because
$\tau_2\in\Alg_{\mathrm{sqr}}{\GL_{n-2k}}$, we can apply induction. If $l$ is odd, the induction terminates with a supercuspidal representation of $\GL_k$, namely $\rho$,
to which we apply Theorem~\ref{theorem:supercuspidal dist GLn and pole}.
\end{proof}
\end{proof}

\begin{remark}\label{remark:irred}
Let $\tau\in\Alg_{\mathrm{sqr}}{\GL_n}$. If $L(s,\tau,\mathrm{Sym}^2)$ has a pole at $s=0$, then $\mathrm{I}(\tau,s)$ (in the notation
of the proof of Theorem~\ref{theorem:L2 distinguished implies cusp support self-dual}) is reducible at $s=1/2$. Indeed,
the theorem above implies that $\tau$ is distinguished hence by \cite{me10} (Corollary~4.8),
$\mathrm{I}(\tau,1/2)$ is distinguished. If $\mathrm{I}(\tau,1/2)$ is irreducible, then it is also generic, contradicting the fact that
$\Theta\otimes\Theta'$ does not afford a Whittaker functional (\cite{me10} Theorem~1). Of course this is not a new observation, as
this reducibility already follows from the aforementioned result of Tadi\`{c} (\cite{Td2} Theorem~9.1).
\end{remark}

We are ready to prove the characterization result for generic distinguished representations.
\begin{theorem}\label{theorem:distinguished irred generic}
Let $\tau\in\Alg_{\mathrm{irr}}{\GL_n}$ be generic and write $\tau=\tau_1\times\ldots\times\tau_m$, where
$\tau_i\in\Alg_{\mathrm{esqr}}{\GL_{\alpha_i}}$ and $\alpha=(\alpha_1,\ldots,\alpha_m)$ is a partition
of $n$. Then $\tau$ is distinguished if and only if there is $0\leq m_0\leq\lfloor m/2\rfloor$ such that, perhaps after permuting
the indices of the inducing data, $\tau_{2i}=\tau_{2i-1}^{\vee}$ for $1\leq i\leq m_0$ and $\tau_i$ is distinguished for $2m_0+1\leq i\leq m$.
\end{theorem}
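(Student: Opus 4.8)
The plan is to prove the two implications separately, the forward one by a short appeal to the heredity results already in hand, and the reverse one by induction on $n$, peeling off one essentially square-integrable factor at a time using Lemma~\ref{lemma:repeated filtration argument for 2 blockes}.

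\emph{The ``if'' direction} is immediate. Since $\tau$ is irreducible its defining segments are pairwise unlinked, so I may reorder the $\tau_i$ freely and assume the arrangement is as in the statement. By Theorem~\ref{theorem:tau otimes tau dual is dist}, $\tau_{2i-1}\times\tau_{2i-1}^{\vee}$ is a distinguished representation of $\GL_{2\alpha_{2i-1}}$ for each $1\le i\le m_0$, which is precisely the ``$\tau_{i,0}\times\tau_{i,0}^{\vee}$'' alternative permitted in Corollary~\ref{corollary:correctness for irred generic} (with $\tau_{i,0}=\tau_{2i-1}$, which is irreducible); together with the hypothesis that $\tau_i$ is distinguished for $2m_0+1\le i\le m$, that corollary, applied to the coarsening of $\alpha$ which joins the $2i$-th block to the $(2i-1)$-th for $i\le m_0$, gives that $\tau=\tau_1\times\cdots\times\tau_m$ is distinguished.

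\emph{The ``only if'' direction}, by induction on $n$. If $m\le1$ then $\tau$ is essentially square-integrable, hence (being distinguished) unitary and square-integrable, and the statement holds with $m_0=0$. Assume $m\ge2$ and that $\tau$ is distinguished. Reorder so that the segment $\Delta_1$ of $\tau_1$ is extremal among the $\Delta_i$ in a fixed way --- e.g.\ with a minimal starting point in its supercuspidal line, and of maximal length among those --- breaking remaining ties so that $\alpha_1\le n/2$ (possible since $m\ge2$). Write $\tau=\tau_1\times\tau'$ with $\tau'=\tau_2\times\cdots\times\tau_m\in\Alg_{\mathrm{irr}}{\GL_k}$, $k=n-\alpha_1$. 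As $\tau$ is irreducible, $\tau^{\vee}=\tau_1^{\vee}\times(\tau')^{\vee}=\Ind_{Q_{\alpha_1,k}}^{\GL_n}(\delta_{Q_{\alpha_1,k}}^{1/2}\tau_1^{\vee}\otimes(\tau')^{\vee})$, and Frobenius reciprocity together with the triviality of $U_{\alpha_1,k}$ on the inducing data converts $\Hom_{\GL_n}(\theta\otimes\theta',\tau^{\vee})\ne0$ into
\begin{align*}
\Hom_{M_{\alpha_1,k}}((\theta\otimes\theta')_{U_{\alpha_1,k}},\delta_{Q_{\alpha_1,k}}^{1/2}\tau_1^{\vee}\otimes(\tau')^{\vee})\ne0.
\end{align*}
Lemma~\ref{lemma:repeated filtration argument for 2 blockes} then produces $0\le j\le\min(\alpha_1,k)=\alpha_1$ with $\Hom_{L_{n,k,j}}(\xi_{n,k,j}\otimes1,(\delta_{Q_{\alpha_1-j,j}}^{-1/2}(\tau_1)_{U_{\alpha_1-j,j}}\otimes\delta_{Q_{j,k-j}}^{-1/2}(\tau')_{U_{j,k-j}})^{\vee})\ne0$. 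Now one runs the combinatorial analysis: $(\tau_1)_{U_{\alpha_1-j,j}}$ is (when nonzero) a twist of $\langle\Delta_1^{(1)}\rangle^{t}\otimes\langle\Delta_1^{(2)}\rangle^{t}$, $\Delta_1=\Delta_1^{(1)}\sqcup\Delta_1^{(2)}$ the initial/final splitting, by Zelevinsky \cite{Z3} (Prop.~9.6); $(\tau')_{U_{j,k-j}}$ is glued by the Geometric Lemma from pieces $\varrho\otimes\varrho'$. Fixing an irreducible subquotient realizing the $\Hom$, Corollary~\ref{corollary:hom of tensor separates} (with Claim~\ref{claim:eta tau is dist when tau is for odd n} absorbing square-trivial twists in the odd blocks) shows $\langle\Delta_1^{(1)}\rangle^{t}$ and the $\GL_{k-j}$-factor $\varrho'$ are distinguished, while the diagonal $\mathbb{GL}_j$ (on which $\xi_{n,k,j}$ is trivial) forces $\langle\Delta_1^{(2)}\rangle^{t}$ and the $\GL_j$-factor of $\varrho$ --- a product of initial sub-segments that must be a single segment --- to be mutually contragredient. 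Unitarity of distinguished representations makes $\Delta_1^{(1)}$ symmetric about $0$, and comparing starting points of $\Delta_1,\Delta_1^{(1)},\Delta_1^{(2)\vee}$ and the relevant $\Delta_{i_0}$ against the extremality of $\Delta_1$ forces $\Delta_1^{(1)}\in\{\emptyset,\Delta_1\}$, i.e.\ $j\in\{0,\alpha_1\}$, and in the case $j=\alpha_1$ forces $\Delta_{i_0}=\Delta_1^{\vee}$, i.e.\ some $\tau_{i_0}\cong\tau_1^{\vee}$. If $j=0$ then $\tau_1$ and $\tau'$ are both distinguished: apply the inductive hypothesis to $\tau'$ and adjoin $\tau_1$ as one more distinguished factor. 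If $j=\alpha_1$ then, after reindexing, $\tau_2\cong\tau_1^{\vee}$ and $\varrho'\cong\tau_3\times\cdots\times\tau_m$ is distinguished: apply the inductive hypothesis to $\tau_3\times\cdots\times\tau_m$ and adjoin the pair $(\tau_1,\tau_2)$. Both reductions decrease $n$, so the induction closes.

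\emph{Main obstacle.} The real work is the combinatorial step: extracting from a single nonvanishing $\Hom$ on $L_{n,k,j}$ --- summed over its irreducible contributions and over the many Geometric-Lemma pieces of $(\tau')_{U_{j,k-j}}$ --- that the segment splittings are forced to be trivial, so that whole essentially square-integrable factors (not proper sub-segments) get matched up. This needs a careful choice of which factor to peel off and careful tracking of supercuspidal supports across the possibly several cuspidal lines present, the decisive inputs being unitarity of distinguished representations (surviving sub-segments are symmetric about $0$) and the extremality of $\Delta_1$ (which rules out a nontrivial symmetric initial sub-segment). A subsidiary point is that ``$\varrho'$ is distinguished'' has to be used in the weaker form ``$\theta\otimes\theta'$ surjects onto $(\varrho')^{\vee}$'', which is exactly what Corollary~\ref{corollary:hom of tensor separates} delivers, and one must check the inductive hypothesis is applied to an honest generic (irreducible) representation at the next stage --- which holds since the remaining factors form a sub-multiset of the pairwise-unlinked original data.
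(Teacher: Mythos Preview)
Your overall strategy matches the paper's: the ``if'' direction via Corollary~\ref{corollary:correctness for irred generic}, and the ``only if'' by Frobenius reciprocity, Lemma~\ref{lemma:repeated filtration argument for 2 blockes}, Corollary~\ref{corollary:hom of tensor separates}, and induction. The substantive difference is in which factor you peel off and how you handle the segment combinatorics.

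The paper peels off the \emph{largest} factor $\tau_m$ (arranging $\alpha_i\le\alpha_m$ for all $i$), not an ``extremal'' one. This choice is what drives the key combinatorial step (Claim~\ref{claim:analysis of varrho 2 j}): in the relevant cuspidal line one gets $l_i\le l_m$ for every $i$, and this inequality is exactly what forces, for $0<j<k$, two segments to be linked --- contradicting irreducibility. In the case $j=k$ it is again maximality of $l_m$ that, via a careful ordering argument on the sub-segments whose supports must partition that of $\tau_m^\vee$, yields a single $\tau_i\cong\tau_m^\vee$ and identifies the remaining product as $\prod_{r\ne i,m}\tau_r$.

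Your extremality criterion might lead to a working argument, but several points are not in order. First, the parenthetical ``$\alpha_1\le n/2$ (possible since $m\ge2$)'' is false as stated: there may be a unique segment with minimal starting point, of size $>n/2$ (e.g.\ $\Delta_1=[\nu^{-1}\rho,\nu\rho]$, $\Delta_2=\{\rho\}$ on $\GL_3\times\GL_1$). The constraint is in fact unnecessary --- if $\alpha_1>n/2$ then $j<\alpha_1$ always and one need only rule out $0<j<\alpha_1$ --- but you should drop the claim. Second, you have the Zelevinsky splitting backwards: for $(\tau_1)_{U_{\alpha_1-j,j}}$ the $\GL_{\alpha_1-j}$-factor is the \emph{final} (upper) sub-segment and the $\GL_j$-factor the \emph{initial} one; likewise the $\GL_j$-factor of $(\tau')_{U_{j,k-j}}$ is built from \emph{final} sub-segments of the $\Delta_i$, not initial ones. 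Third --- and this is the real gap --- your assertions that the $\GL_j$-piece ``must be a single segment'' and that for $j=\alpha_1$ one obtains $\tau_{i_0}\cong\tau_1^\vee$ with $\varrho'\cong\prod_{i\ne 1,i_0}\tau_i$ are precisely the content of Claim~\ref{claim:analysis of varrho 2 j}, which takes a full page of segment combinatorics and relies essentially on $\alpha_i\le\alpha_m$. You would need to supply the analogous argument under your extremality hypothesis, and it is not evident that it goes through as directly.
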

\begin{proof}[Proof of Theorem~\ref{theorem:distinguished irred generic}]
Since $\tau$ is irreducible, one may permute the inducing data $\tau_1\otimes\ldots\otimes\tau_m$ without affecting $\tau$. Hence one
direction follows immediately from Corollary~\ref{corollary:correctness for irred generic}.

Assume $\tau$ is distinguished.
The result is trivial for $m=1$, assume $m>1$. We may also assume (perhaps after applying a permutation) that $\alpha_i\leq \alpha_m$ for all $1\leq i\leq m$.
For each $1\leq i\leq m$, set
$\tau_i=\langle[\rho_i,\nu^{l_i-1}\rho_i]\rangle^t$, $l_i\geq1$, $\rho_i\in\Alg_{\mathrm{cusp}}{\GL_{\alpha_i/l_i}}$.
Also put $k=\alpha_m$ and $l=l_m$. By Zelevinsky \cite{Z3} (Proposition~9.6), for any $0\leq j\leq k$, $\delta_{Q_{j,k-j}}^{-1/2}(\tau_m)_{U_{j,k-j}}=0$ unless
$k/l$ divides $k-j$ and then
\begin{align*}
&\delta_{Q_{j,k-j}}^{-1/2}(\tau_m)_{U_{j,k-j}}=\langle[\nu^{a}\rho_m,\nu^{l-1}\rho_m]\rangle^t\otimes\langle[\rho_m,\nu^{a-1}\rho_m]\rangle^t,\qquad a=l(k-j)/k.
\end{align*}
Here $0\leq a\leq l$ and since $k/l$ divides $k-j$, $a\in\mathbb{Z}$. Note that if $a=l$ ($j=0$), $\langle[\nu^{a}\rho_m,\nu^{l-1}\rho_m]\rangle^t=1$, and when $a=0$ ($j=k$), $\langle[\rho_m,\nu^{a-1}\rho_m]\rangle^t=1$.

For $0\leq j\leq n-k$, write
\begin{align*}
&\delta_{Q_{n-k-j,j}}^{-1/2}(\tau_1\times\ldots\times \tau_{m-1})_{U_{n-k-j,j}}=\varrho_{1,j}\otimes\varrho_{2,j}\in\Alg{M_{n-k-j,j}}.
\end{align*}

Applying Lemma~\ref{lemma:repeated filtration argument for 2 blockes}, for some $0\leq j\leq \min(n-k,k)$,
\begin{align}\label{space:after lemma 2 blocs in general proposition}
\Hom_{L_{n,k,j}}(\xi_{n,k,j}\otimes1,
(\varrho_{1,j}\otimes\varrho_{2,j}\otimes\langle[\nu^{a}\rho_m,\nu^{l-1}\rho_m]\rangle^t\otimes\langle[\rho_m,\nu^{a-1}\rho_m]\rangle^t)^{\vee})\ne0.
\end{align}
In particular
\begin{align*}
\Hom_{\GL_{n-k-j}\times\GL_{k-j}}(&(\theta_{n-k-j,1,\gamma_1}\widetilde{\otimes}_{\gamma_{(j)}}\theta_{k-j,1,\gamma_2})\otimes
(\theta_{n-k-j,1,\gamma_1'}\widetilde{\otimes}_{\gamma'_{(j)}}\theta_{k-j,1,\gamma_2'}),\\&
\varrho_{1,j}^{\vee}\otimes(\langle[\rho_m,\nu^{a-1}\rho_m]\rangle^t)^{\vee})\ne0
\end{align*}
and according to Corollary~\ref{corollary:hom of tensor separates}, $\varrho_{1,j}$ and $\langle[\rho_m,\nu^{a-1}\rho_m]\rangle^t$ are distinguished.

If $j=0$, $\varrho_{1,j}=\tau_1\times\ldots\times\tau_{m-1}$ is a distinguished generic representation and $\langle[\rho_m,\nu^{a-1}\rho_m]\rangle^t=\tau_m$.
The proof is then complete, by applying induction to
$\tau_1\times\ldots\times\tau_{m-1}$.

Henceforth assume $j>0$ and then $a<l$. If also $j<k$, then since $\langle[\rho_m,\nu^{a-1}\rho_m]\rangle^t\in\Alg_{\mathrm{esqr}}{\GL_{k-j}}$ is distinguished, Theorem~\ref{theorem:L2 distinguished implies cusp support self-dual} implies
$\langle[\rho_m,\nu^{a-1}\rho_m]\rangle^t=\langle[\nu^{(1-a)/2}\rho,\nu^{(a-1)/2}\rho]\rangle^t$ for a self-dual $\rho\in\Alg_{\mathrm{cusp}}{\GL_{k/l}}$.
Put $A=(a-1)/2$. Then $\rho_m=\nu^{-A}\rho$,
\begin{align*}
&\tau_m=\langle[\nu^{-A}\rho,\nu^{-A+l-1}\rho]\rangle^t,\\
&\delta_{Q_{j,k-j}}^{-1/2}(\tau_m)_{U_{j,k-j}}=\langle[\nu^{A+1}\rho,\nu^{-A+l-1}\rho]\rangle^t\otimes\langle[\nu^{-A}\rho,\nu^{A}\rho]\rangle^t
\end{align*}
and note that $A+1\leq -A+l-1$ and $A\geq0$ (because $0<a<l$).
When $j=k$, we cannot deduce anything about the structure of $\tau_m$.

To proceed we need to describe the structure of $\varrho_{1,j}\otimes\varrho_{2,j}$. To this end we use the concrete description of the composition factors
of $\delta_{Q_{n-k-j,j}}^{-1/2}(\tau_1\times\ldots\times \tau_{m-1})_{U_{n-k-j,j}}$ given by Zelevinsky (\cite{Z3}~1.6), which is a
formulation of \cite{BZ2} (Theorem~5.2) for this setting. Let $\mathcal{B}$ be the set of $(m-1)\times2$ matrices whose coordinates are nonnegative integers,
the sum of the coordinates in the $i$-th row is $\alpha_i$, the sum of the coordinates in the first column is $n-k-j$, and the second column sums up to $j$.
Let $b\in\mathcal{B}$. The $(i,r)$-th coordinate of $b$ will be denoted $b(i,r)$.
For each $1\leq i\leq m-1$, 
$(\tau_i)_{U_{b(i,1),b(i,2)}}=0$ unless $\alpha_i/l_i$ divides $b(i,2)$ and then
\begin{align*}
&\delta_{Q_{b(i,1),b(i,2)}}^{-1/2}(\tau_i)_{U_{b(i,1),b(i,2)}}
=\langle[\nu^{a_i}\rho_i,\nu^{l_i-1}\rho_i]\rangle^t\otimes\langle[\rho_i,\nu^{a_i-1}\rho_i]\rangle^t,\qquad a_i=l_ib(i,2)/\alpha_i.
\end{align*}
Note that if $b(i,2)=\alpha_i$, $\langle[\nu^{a_i}\rho_i,\nu^{l_i-1}\rho_i]\rangle^t=1$ and when $b(i,2)=0$, $\langle[\rho_i,\nu^{a_i-1}\rho_i]\rangle^t=1$.
Assuming $\alpha_i/l_i$ divides $b(i,2)$ for each $1\leq i\leq m-1$, put
\begin{align*}
&\varrho_{1,j,b}=\langle[\nu^{a_1}\rho_1,\nu^{l_1-1}\rho_1]\rangle^t\times\ldots\times\langle[\nu^{a_{m-1}}\rho_{m-1},\nu^{l_{m-1}-1}\rho_{m-1}]\rangle^t,\\
&\varrho_{2,j,b}=\langle[\rho_1,\nu^{a_1-1}\rho_1]\rangle^t\times\ldots\times\langle[\rho_{m-1},\nu^{a_{m-1}-1}\rho_{m-1}]\rangle^t.
\end{align*}
Then $\delta_{Q_{n-k-j,j}}^{-1/2}(\tau_1\times\ldots\times \tau_{m-1})_{U_{n-k-j,j}}$ is glued from the representations
\begin{align*}
\varrho_{1,j,b}\otimes\varrho_{2,j,b},\qquad b\in\mathcal{B}.
\end{align*}
The following claim uses this description to analyze the structure of $\varrho_{1,j,b}$ and $\varrho_{2,j,b}$, given
that $\varrho_{2,j,b}$ contains a certain quotient.
\begin{claim}\label{claim:analysis of varrho 2 j}
Let $\pi\in\Alg_{\mathrm{cusp}}{\GL_{k/l}}$ and assume that $0<j\leq k$, $lj/k\in\mathbb{Z}$ and for some $b\in\mathcal{B}$,
\begin{align}\label{claim:assumption for analysis of varrho 2 j}
\Hom_{\GL_{j}}(\varrho_{2,j,b},\langle[\pi,\nu^{\frac{lj}k-1}\pi]\rangle^t)\ne0.
\end{align}
If $j<k$, then $\nu^{a_{i}-1}\rho_{i}=\nu^{\frac{lj}k-1}\pi$ for some $i$ with $b(i,2)>0$. If $j=k$, then for some $1\leq i\leq m-1$,
$\tau_i\isomorphic\langle[\pi,\nu^{l-1}\pi]\rangle^t$, and
\begin{align*}
\varrho_{1,k,b}=\tau_1\times\ldots\times\tau_{i-1}\times\tau_{i+1}\times\ldots\times\tau_{m-1}.
\end{align*}
\end{claim}

Now we can dispose of the nontrivial non-generic orbits. Indeed, assume $j<k$.
Suppose \eqref{space:after lemma 2 blocs in general proposition} does not vanish, then for some $b\in\mathcal{B}$,
\begin{align*}
\Hom_{\GL_{j}}(\varrho_{2,j,b},\langle[\nu^{A-l+1}\rho,\nu^{-A-1}\rho]\rangle^t)
\ne0.
\end{align*}
Applying Claim~\ref{claim:analysis of varrho 2 j} we deduce that $\nu^{a_i-1}\rho_{i}=\nu^{-A-1}\rho$ for some $i$ with $b(i,2)>0$.
Then
\begin{align*}
\tau_i=\langle[\nu^{-A-a_i}\rho,\nu^{-A-a_i+l_i-1}\rho]\rangle^t,\qquad\tau_m=\langle[\nu^{-A}\rho,\nu^{-A+l-1}\rho]\rangle^t.
\end{align*}
We show that $\tau_i$ and $\tau_m$ must be linked, contradicting the irreducibility of $\tau$. Specifically, we show
\begin{align*}
&-A-a_i<-A,\qquad -A-a_i+l_i-1<-A+l-1, \qquad-A-a_i+l_i-1\geq-A-1.
\end{align*}
Because
$\rho_i\in\Alg{\GL_{\alpha_i/l_i}}$ and $\rho\in\Alg{\GL_{k/l}}$, we have $\alpha_i/l_i=k/l$, and since
(by assumption) $\alpha_i\leq k$, we obtain $l_i\leq l$.
The first two inequalities follow from this and because $a_i=l_ib(i,2)/\alpha_i>0$. The last holds because
$b(i,2)\leq \alpha_i$ (by definition) whence $a_i\leq l_i$.

Finally, assume $j=k$. Then \eqref{space:after lemma 2 blocs in general proposition}
vanishes unless for some $b\in\mathcal{B}$, both
\begin{align*}
\Hom_{\GL_{n-2k}}(\theta_{n-2k,1,\gamma_1}\otimes
\theta_{n-2k,1,\gamma_1'},\varrho_{1,k,b}^{\vee})\ne0,\qquad 
\Hom_{\GL_{k}}(\varrho_{2,k,b},\tau_m^{\vee})\ne0.
\end{align*}
Again appealing to Claim~\ref{claim:analysis of varrho 2 j} we find that
$\tau_i\isomorphic\tau_m^{\vee}$ for some $1\leq i\leq m-1$ and
\begin{align*}
\varrho_{1,k,b}=\tau_1\times\ldots\times\tau_{i-1}\times\tau_{i+1}\times\ldots\times\tau_{m-1}.
\end{align*}
If $n=2k$, we are done. Otherwise the theorem follows by applying induction to $\varrho_{1,k,b}$.

\begin{proof}[Proof of Claim~\ref{claim:analysis of varrho 2 j}]
Let $1\leq i_1<\ldots<i_c\leq m-1$ be a minimal set of indices such that
$b(i_1,2)+\ldots+b(i_c,2)=j$, $c\geq1$. Then
\begin{align*}
\varrho_{2,j,b}=\langle[\rho_{i_1},\nu^{a_{i_1}-1}\rho_{i_1}]\rangle^t\times\ldots\times\langle[\rho_{i_c},\nu^{a_{i_c}-1}\rho_{i_c}]\rangle^t.
\end{align*}
In general if $\varepsilon\in\Alg{\GL_n}$ is a subrepresentation of a representation parabolically induced from a cuspidal representation
$\varepsilon_1\otimes\ldots\otimes\varepsilon_t\in\Alg_{\mathrm{cusp}}{M_{\beta}}$, where $\beta$ is a partition of $n$ into $t$ parts, we
say that the cuspidal support of $\varepsilon$ is the multiset (i.e., including multiplicities)
$\{\varepsilon_1,\ldots,\varepsilon_t\}$ and denote it by $\mathrm{Supp}(\varepsilon)$.
According to Zelevinsky \cite{Z3} (9.1 and 6.1)
\begin{align*}
\mathrm{Supp}(\varrho_{2,j,b})=\{\rho_{i_r},\ldots,\nu^{a_{i_r}-1}\rho_{i_r}\}_{1\leq r\leq c},\qquad
\mathrm{Supp}(\langle[\pi,\nu^{\frac{lj}k-1}\pi]\rangle^t)=\{\pi,\ldots,\nu^{\frac{lj}k-1}\pi\}.
\end{align*}
Assumption~\eqref{claim:assumption for analysis of varrho 2 j} implies that $\langle[\pi,\nu^{\frac{lj}k-1}\pi]\rangle^t$ is an irreducible subquotient of a representation
induced from the tensor of all the representations (with multiplicities) appearing in $\mathrm{Supp}(\varrho_{2,j,b})$. Hence by Bernstein and Zelevinsky \cite{BZ2} (Theorem~2.9),
as multisets
\begin{align}\label{eq:first eq of csup supports}
\mathrm{Supp}(\varrho_{2,j,b})=\{\pi,\ldots,\nu^{\frac{lj}k-1}\pi\}.
\end{align}

Therefore
$\rho_{i_r}=\nu^{s_{i_r}}\pi$ for some integer $s_{i_r}\geq 0$, for all $r$. Furthermore, there exists $i\in\{i_1,\ldots,i_c\}$
such that $\nu^{a_{i}-1}\rho_{i}=\nu^{\frac{lj}k-1}\pi$ and clearly $b(i,2)>0$. This completes the proof for $j<k$.

Henceforth assume $j=k$. If $c=1$, then for $i=i_1$ ($1\leq i\leq m-1$), $\alpha_i\geq b(i,2)=k$. Since $k\geq \alpha_i$, we see that $b(i,2)=\alpha_i$,
$\varrho_{2,k,b}=\tau_i$ and \eqref{claim:assumption for analysis of varrho 2 j} implies
$\tau_i\isomorphic\langle[\pi,\nu^{l-1}\pi]\rangle^t$. Also because $b(i,1)+b(i,2)=\alpha_i$ and the sum of entries in the second column of $b$ is $k$, we have $b(i,1)=0$, $b(r,2)=0$ and $b(r,1)=\alpha_r$ for all $r\ne i$. Thus
\begin{align*}
\varrho_{1,k,b}=\tau_1\times\ldots\times\tau_{i-1}\times\tau_{i+1}\times\ldots\times\tau_{m-1},
\end{align*}
as claimed.

Assume $c>1$. For simplicity, assume $i_r=r$ for $1\leq r\leq c$.
Now we can denote, for $1\leq i\leq c$,
\begin{align*}
[\rho_i,\nu^{l_i-1}\rho_i]=[s_i,e_i], \qquad [\rho_i,\nu^{a_i-1}\rho_i]=[s_i,h_i],\qquad s_i\leq h_i\leq e_i, \quad s_i,h_i,e_i\in\mathbb{Z}.
\end{align*}
Indeed, the representations $\rho_i$ are all unramified twists of one representation $\pi$, so $\rho_i$ can be safely excluded from the notation. In other words, the segment $\{\rho_i,\ldots,\nu^{l_i-1}\rho_i\}$ will henceforth be denoted $[s_i,e_i]$.
Note that $\tau_i=\langle[\rho_i,\nu^{l_i-1}\rho_i]\rangle^t=\langle[\nu^{s_i}\pi,\nu^{e_i}\pi]\rangle^t$. We can rewrite \eqref{eq:first eq of csup supports} in the form
\begin{align}\label{eq:second eq of csup supports}
\{\nu^{s_i}\pi,\ldots,\nu^{h_i}\pi\}_{1\leq i\leq c}=\{\pi,\ldots,\nu^{l-1}\pi\}.
\end{align}
In particular $0\leq s_i\leq l-1$ for all $i$.
Renumbering if necessary, we can assume that the segment $[s_1,e_1]$ is such that $s_1=0$ and $e_i\leq e_1$ for all $i$ such that $s_i=0$.

Because $\rho_i\in\Alg{\GL_{\alpha_i/l_i}}$ and $\pi\in\Alg{\GL_{k/l}}$, we have $\alpha_i/l_i=k/l$, and since
(by assumption) $\alpha_i\leq k$, we obtain $l_i\leq l$.
Since $e_i-s_i+1=l_i\leq l$ and $s_1=0$, $e_1\leq l-1$.
If $s_1<s_i\leq e_1+1$, then $s_i\leq e_i\leq e_1$ because otherwise the
segments $[s_1,e_1]$ and $[s_i,e_i]$ are linked contradicting the fact that $\tau$ is irreducible. In particular, $s_i\ne e_1+1$ for all $i$. If
$e_1<l-1$, $\nu^{e_1+1}\pi$ must appear in $\mathrm{Supp}(\varrho_{2,j,b})$, but this is impossible because $e_1+1$ is not covered by any segment
$[s_i,e_i]$. Therefore $e_1=l-1$, then $s_i\leq e_1$, whence $e_i\leq e_1$ for all $i$.

Thus far we have shown $[s_1,e_1]=[0,l-1]$, which immediately implies
$\tau_1=\langle[\pi,\nu^{l-1}\pi]\rangle^t$, and all other segments are contained in the first segment. Next we argue that the segments $[s_i,h_i]$
are disjoint. Indeed, any such segment contributes at most $h_i-s_i+1=a_i=l_ib(i,2)/\alpha_i$ representations to $\mathrm{Supp}(\varrho_{2,j,b})$
and since $l_i/\alpha_i=l/k$,
\begin{align*}
\sum_{i=1}^ch_i-s_i+1=\frac lk\sum_{i=1}^cb(i,2)=l.
\end{align*}
But $\mathrm{Supp}(\langle[\pi,\nu^{l-1}\pi]\rangle^t)$ contains $l$ non-isomorphic representations, hence by \eqref{eq:second eq of csup supports} the
segments are disjoint.

Therefore, perhaps after renumbering segments $2,\ldots c$,
we can assume $s_1<s_2\ldots<s_c$. Then the fact that their lengths sum up to $l$ also implies $s_{i+1}=h_i+1$ whence
\begin{align*}
s_1\leq h_1<s_2\leq h_2<s_3\ldots<s_{c}\leq h_c.
\end{align*}
Because the segments $[s_i,e_i]$, $[s_{i+1},e_{i+1}]$ are not linked and $s_i<s_{i+1}\leq e_i+1$ ($h_i\leq e_i$), we obtain
$e_c\leq\ldots\leq e_2\leq e_1\leq l-1$. Now we must have $e_c=l-1$, because $\nu^{l-1}\pi\in \mathrm{Supp}(\varrho_{2,j,b})$ only if
$h_c\geq l-1$. It follows that $h_c=e_c$ and $e_1=\ldots=e_c$.

We conclude $[h_i+1,e_i]=[s_{i+1},e_{i+1}]$ for all $1\leq i\leq c-1$ and 
\begin{align*}
\langle[\nu^{h_1+1}\pi,\nu^{e_1}\pi]\rangle^t\times\ldots\times\langle[\nu^{h_{c-1}+1}\pi,\nu^{e_{c-1}}\pi]\rangle^t=\langle[\nu^{s_2}\pi,\nu^{e_2}\pi]
\rangle^t\times\ldots\times\langle[\nu^{s_c}\pi,\nu^{e_c}\pi]\rangle^t=\tau_2\times\ldots\times\tau_c.
\end{align*}
For the last segment $[s_c,h_c]=[s_c,e_c]$, so $b(c,1)=0$ and
\begin{align*}
\varrho_{1,j,b}=\langle[\nu^{h_1+1}\pi,\nu^{e_1}\pi]\rangle^t\times\ldots\times\langle[\nu^{h_{c-1}+1}\pi,\nu^{e_{c-1}}\pi]\rangle^t\times\tau_{c+1}\times\ldots\times\tau_{m-1}
=\tau_2\times\ldots\times\tau_{m-1}.
\end{align*}
Together with $\tau_1=\langle[\pi,\nu^{l-1}\pi]\rangle^t$ (proved above), the proof is complete also when $j=k$ and $c>1$. Note that
the indices were permuted during the argument, in the original representation $\tau_1$ will actually be $\tau_i$ for some $1\leq i\leq m-1$.
\end{proof}
\end{proof}
\begin{remark}\label{remark:extending combinatorial theorem for any field}
The only place in the proof where we assumed that the characteristic of the field is $0$, was in the application
of Theorem~\ref{theorem:L2 distinguished implies cusp support self-dual}. However, it is simple to see that we only
used the fact that a square-integrable distinguished representation is self-dual. Granted this, Theorem~\ref{theorem:distinguished irred generic} is valid also for fields of odd characteristic.
\end{remark}
\begin{corollary}
Let $\tau\in\Alg_{\mathrm{irr}}{\GL_n}$ be a generic distinguished representation. Then $\tau$ is self-dual.
\end{corollary}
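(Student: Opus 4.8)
The plan is to read the statement off directly from Theorem~\ref{theorem:distinguished irred generic}. First I would write $\tau=\tau_1\times\ldots\times\tau_m$ with each $\tau_i\in\Alg_{\mathrm{esqr}}{\GL_{\alpha_i}}$ and $\alpha=(\alpha_1,\ldots,\alpha_m)$ a partition of $n$, and then apply Theorem~\ref{theorem:distinguished irred generic} to obtain $0\leq m_0\leq\lfloor m/2\rfloor$ such that, perhaps after permuting the indices, $\tau_{2i}=\tau_{2i-1}^{\vee}$ for $1\leq i\leq m_0$ and $\tau_i$ is distinguished for $2m_0+1\leq i\leq m$.

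Next I would take contragredients. Since $(\tau_1\times\ldots\times\tau_m)^{\vee}\isomorphic\tau_1^{\vee}\times\ldots\times\tau_m^{\vee}$, it suffices to show that the multiset of inducing data $\{\tau_1,\ldots,\tau_m\}$ is stable under $\tau_i\mapsto\tau_i^{\vee}$: granted this, the irreducibility of $\tau$ — which already in the proof of Theorem~\ref{theorem:distinguished irred generic} was used to permute the $\tau_i$ without changing $\tau$ — forces $\tau^{\vee}\isomorphic\tau$. For the paired blocks the stability is immediate, as $\{\tau_{2i-1}^{\vee},\tau_{2i}^{\vee}\}=\{\tau_{2i},\tau_{2i-1}\}$. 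For the remaining blocks, $\tau_i$ with $2m_0+1\leq i\leq m$ is essentially square-integrable and distinguished, hence unitary and therefore square-integrable (as observed at the beginning of Section~\ref{section:Distinguished representations}); then Theorem~\ref{theorem:L2 distinguished implies cusp support self-dual} gives that $L(s,\tau_i,\mathrm{Sym}^2)$ has a pole at $s=0$, and by Remark~\ref{remark:Shahidi's computation of L function L2 tau Sym2} this forces $\tau_i$ to be self-dual, $\tau_i^{\vee}\isomorphic\tau_i$.

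I do not expect a genuine obstacle here beyond the bookkeeping above; the one ingredient that has to be cited rather than re-derived is the fact that a distinguished essentially square-integrable representation is self-dual, which is exactly what Theorem~\ref{theorem:L2 distinguished implies cusp support self-dual} provides once combined with the self-duality remark following Shahidi's computation in Remark~\ref{remark:Shahidi's computation of L function L2 tau Sym2}. Combining the two cases, the multiset $\{\tau_1^{\vee},\ldots,\tau_m^{\vee}\}$ coincides with $\{\tau_1,\ldots,\tau_m\}$, and irreducibility yields $\tau^{\vee}\isomorphic\tau$.
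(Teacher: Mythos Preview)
Your argument is correct and is exactly the intended route: the paper states this corollary without proof immediately after Theorem~\ref{theorem:distinguished irred generic}, leaving it as a direct consequence of that theorem together with the self-duality of distinguished square-integrable representations (which is contained in Theorem~\ref{theorem:L2 distinguished implies cusp support self-dual} and Remark~\ref{remark:Shahidi's computation of L function L2 tau Sym2}, or equivalently in Claim~\ref{claim:combinatorial structure of distinguished L2}).
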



\bibliographystyle{alpha}

\end{document}